\definecolor{darkred}{rgb}{0.5,0,0}
\definecolor{darkgreen}{rgb}{0,0.5,0}
\definecolor{darkblue}{rgb}{0,0,0.5}
\newcommand\redout{\bgroup\markoverwith
{\textcolor{red}{\rule[.5ex]{2pt}{0.4pt}}}\ULon}
\theoremstyle{plain}
\newtheorem{theorem}{Theorem}[section]
\newtheorem{lemma}[theorem]{Lemma}
\newtheorem{corollary}[theorem]{Corollary}
\newtheorem{proposition}[theorem]{Proposition}
\newtheorem{definition}[theorem]{Definition}
\theoremstyle{remark}
\newtheorem{remark}[theorem]{Remark}
\newcommand\cW{\mathcal{W}}
\newcommand\M{\mathcal{M}}
\renewcommand\M{\mathcal{M}}
\newcommand{\XX}{\mathcal{X}}
\newcommand{\DD}{\mathcal{D}}
\newcommand{\R}{\mathbb{R}}
\newcommand{\C}{\mathbb{C}}
\newcommand{\cC}{\mathcal{C}}
\newcommand{\Z}{\mathbb{Z}}
\newcommand{\Q}{\mathbb{Q}}
\renewcommand{\P}{\mathbb{P}}
\newcommand{\bA}{\mathbb{A}}
\newcommand\lie[1]{\mathfrak{#1}}
\newcommand{\g}{\lie{g}}
\newcommand{\q}{\lie{q}}
\renewcommand{\l}{\lie{l}}
\newcommand{\z}{\lie{z}}
\renewcommand{\t}{\lie{t}}
\renewcommand{\u}{\lie{u}}
\newcommand{\on}{\operatorname}
\newcommand{\st}{\on{st}}
\newcommand{\quot}{\on{quot}}
\newcommand{\dual}{\vee}
\newcommand{\Edge}{\on{Edge}}
\newcommand{\Ver}{\on{Vert}}
\newcommand{\Proj}{\on{Proj}}
\newcommand{\Aut}{ \on{Aut} }
\newcommand{\Ad}{ \on{Ad} }
\newcommand{\Hom}{ \on{Hom}}
\newcommand{\Spec}{\on{Spec}}
\newcommand{\ssm}{-}
\newcommand\dirac{/\kern-1.2ex\partial} 
\newcommand\qu{/\kern-.7ex/} 
\newcommand\lqu{\backslash \kern-.7ex \backslash} 
\newcommand\dr{r_+ \kern-.7ex - \kern-.7ex r_-}
\newcommand{\lev}{{\on{lev}}} 
\newtheorem{example}[theorem]{Example}
\newcommand{\labell}\label
\newcommand{\ra}{\rightarrow}
\renewcommand{\d}{{\on{d}}}
\newcommand{\ol}{\overline}
\newcommand{\lan}{\langle}
\newcommand{\ran}{\rangle}
\newcommand{\ti}{\tilde}
\newcommand\pt{\on{pt}}
\newcommand\Def{\on{Def}}
\newcommand\age{\on{age}}
\newcommand\cE{\mathcal{E}}
\renewcommand{\ss}{\on{ss}}
\newcommand{\us}{\on{us}}
\newcommand\mE{\mathcal{E}}
\newcommand\MM{\mathfrak{M}}
\newcommand\Gr{\on{Gr}}
\newcommand\Sym{\on{Sym}}
\newcommand\rank{\on{rank}}
\newcommand\ev{\on{ev}}
\newcommand\ul{\underline}
\newcommand\mO{\mathcal{O}}
\newcommand\bra[1]{ < \kern-.7ex {#1} \kern-.7ex >} 
\newcommand\bdefn{\begin{definition}}
\newcommand\edefn{\end{definition}}
\newcommand\bea{\begin{eqnarray*}}
\newcommand\eea{\end{eqnarray*}}
\newcommand\bcv{\left[ \begin{array}{r} }
\newcommand\ecv{\end{array} \right] }
\newcommand\bma{\left[ \begin{array} }
\newcommand\ema{\end{array} \right]}
\newcommand\ben{\begin{enumerate}}
\newcommand\een{\end{enumerate}}
\newcommand\beq{\begin{equation}}
\newcommand\eeq{\end{equation}}
\newcommand\bex{\begin{example}}
\newcommand\bsj{\left\{ \begin{array}{rrr} }
\newcommand\esj{\end{array} \right\}}
\newcommand\cI{\mathcal{I}}
\newcommand\eex{\end{example}}
\newcommand\sx{*\kern-.5ex_X}
\newcommand{\fr}{{\on{fr}}}
\newcommand{\cT}{{\mathcal{T}}}
\newcommand\lefttwoarrow{%
        \mathrel{\vcenter{\mathsurround0pt
                \ialign{##\crcr
                        \noalign{\nointerlineskip}$\leftarrow$\crcr
                        \noalign{\nointerlineskip}$\leftarrow$\crcr
                }%
        }}%
}
\def\mathunderaccent#1{\let\theaccent#1\mathpalette\putaccentunder}
\def\putaccentunder#1#2{\oalign{$#1#2$\crcr\hidewidth \vbox
to.2ex{\hbox{$#1\theaccent{}$}\vss}\hidewidth}}
\author{Eduardo Gonz\'alez} \thanks{Partially supported by NSF grants
  DMS-1207194 and DMS-1510518}
\address{
Department of Mathematics,
University of Massachusetts Boston,
100 William T. Morrissey Boulevard,
Boston, MA 02125, U.S.A.}
\email{eduardo@math.umb.edu}
\author{Pablo Solis}
\address{Department of Mathematics,
California Institute of Technology, 1200 East California
Boulevard, Pasadena, CA 91125, U.S.A.}
\email{pablos@caltech.edu}
\author{Chris T. Woodward}
\address{Mathematics-Hill Center,
Rutgers University, 110 Frelinghuysen Road, Piscataway, NJ 08854-8019,
U.S.A.}  \email{ctw@math.rutgers.edu}
\begin{document}

\title[Properness for scaled gauged maps]{Properness for scaled gauged
  maps}

\maketitle

\begin{abstract}  
  We prove properness of moduli stacks of gauged maps satisfying a
  stability condition introduced by Mundet \cite{mund:corr}, Schmitt
  \cite{schmitt:univ} and Ziltener \cite{zilt:qk}. The proof combines
  a git construction of Schmitt \cite{schmitt:univ}, properness for
  twisted stable maps by Abramovich-Vistoli
  \cite{abramovich:compactifying}, a variation of a boundedness
  argument due to Ciocan-Fontanine-Kim-Maulik \cite{cf:st}, and a
  removal of singularities for bundles on surfaces in
  Colliot-Th\'el\`ene-Sansuc \cite{ciollot}.
\end{abstract}

\tableofcontents

\section{Introduction}

The moduli stack of maps from a curve to the stack quotient of a smooth
projective variety by the action of a complex reductive group has a
natural stability condition introduced by Mundet in \cite{mund:corr}
and investigated further in Schmitt \cite{schmitt:univ,schmitt:git};
the condition generalizes stability for bundles over a curve
introduced by Mumford, Narasimhan-Seshadri and Ramanathan
\cite{ra:th}.  In an earlier paper \cite{cross} the first and third
authors used the moduli of Mundet-stable maps to give a formula that
relates the genus zero gauged Gromov-Witten invariants and
Gromov-Witten invariants of the git quotient of a smooth projective
variety with reductive group action, termed a quantum analog of
Witten's localization theorem.  The proof of the formula depended on
the properness of the stack.  This properness was proved via
symplectic geometry and results of Ziltener \cite{zilt:qk} and Ott
\cite{ott:remov}. In this paper we give a purely algebraic proof of
properness via the valuative criterion for stacks \cite[Chapter
  7]{la:ch}.

The stability condition for maps to quotient stacks combines several
stability conditions already present in the literature, and leads to a
notion of gauged Gromov-Witten invariant.  Let $X$ be a smooth
projective $G$-variety such that the semi-stable locus is equal to the
stable locus, and $X/G$ the quotient stack.  By definition a map from a
curve $C$ to $X/G$ is a pair that consists of a bundle $P \to C$ and a
section $u$ of the associated bundle $P \times_G X \to C$.  We denote
by $\pi: X/G \to \on{pt}/G =: BG$ the projection to the classifying
space.  In case $X$ is a point, a stability condition for
$\Hom(C,X/G)$, bundles on $C$, was introduced by Ramanathan
\cite{ra:th}.  A stability condition that combines bundle and target
stability was introduced by Mundet \cite{mund:corr}.  There is a
compactified moduli stack $\ol{\M}^G_n(C,X,d)$ whose open locus
consists of Mundet semistable maps of class $d \in H_2^G(X,\Z)$ with
markings:
$$ C \to S, \quad v: C \to X/G, \quad (z_1,\ldots, z_n): S \to C^n
\ \text{distinct} .$$
The compactification uses the notion of Kontsevich stability for maps
\cite{qk1}, \cite{qk2}, \cite{qk3}.  The stack admits evaluation maps
to the quotient stack
$$ \ev: \ol{\M}^G_n(C,X,d) \to (X/G)^n,\quad
(\hat{C},P,u,\ul{z})\mapsto (z_i^* P, u \circ z_i) .$$
In addition, assuming stable=semistable there is a virtual fundamental
class constructed via the machinery of Behrend-Fantechi \cite{bf:in}.

Let $\widehat{QH}_G(X)$ denote the formal completion of $QH_G(X)$ at
$0$.  The {\em gauged Gromov-Witten trace} is the map
\begin{equation} \label{gtrace} \tau_X^G : \widehat{QH}_G(X) \to
  \Lambda_X^G, \quad \alpha \mapsto \sum_{n,d} \frac{q^d}{n!}
  \int_{\ol{\M}_n^G(C,X,d)} \ev^* (\alpha,\ldots,\alpha)
  .\end{equation}
The derivatives of the potential will be called gauged Gromov-Witten
invariants.  For toric varieties, the potential $\tau_X^G$ already
appears in Givental \cite{gi:eq} and Lian-Liu-Yau \cite{lly:mp1} under
the name of {\em quasimap potential}.\footnote{We are simplifying
  things a bit for the sake of exposition; actually the quasimap
  potentials in those papers involve an additional determinant line
  bundle in the integrals.}  In those papers (following earlier work
of Morrison-Plesser \cite{mp:si}) the gauged potential is explicitly
computed in the toric case, and questions about Gromov-Witten
invariants of toric varieties or complete intersections therein
reduced to a computation of quasimap invariants.  We wish re-prove and
extend the results of those papers in a uniform and geometric way that
extends to quantum K-theory and non-abelian quotients and does not use
any assumption such as the existence of a torus action with isolated
fixed points.  The splitting axiom for the gauged invariants is
somewhat different than the usual splitting axiom in Gromov-Witten
theory: the potential $\tau_X^G$ is a non-linear version of a
{\em trace} on the Frobenius manifold $QH_G(X)$.  Note that there are
several other notions of gauged Gromov-Witten invariants, for example,
Ciocan-Fontanine-Kim-Maulik \cite{cf:st}, Frenkel-Teleman-Tolland
\cite{toll:gw1}, as well as a growing body of work on gauged
Gromov-Witten theory with potential \cite{tianxu}, \cite{glsm}.

The gauged Gromov-Witten invariants so defined are closely related to,
but different from in general, the Gromov-Witten invariants of the
stack-theoretic geometric invariant theory quotient.  The stack of
marked maps to the git quotient
$$ v: C \to X \qu G, \quad (z_1,\ldots, z_n) \in C^n \ \text{distinct} $$
is compactified by the {\em graph space} \label{graphs}
$$ \ol{\M}_n(C, X \qu G, d) := \ol{\M}_{g,n}(C \times X \qu G,
(1,d))$$
the moduli stack of stable maps to $C \times X \qu G$ of class
$(1,d)$; in case $X \qu G$ is an orbifold the domain is allowed to
have orbifold structures at the nodes and markings as in
\cite{cr:orb}, \cite{agv:gw}.  The stack admits evaluation maps
$$ \ev: \ol{\M}_n(C,X \qu G,d) \to (\ol{\cI}_{X \qu G})^n $$
where $\ol{\cI}_{X\qu G}$ is the {\em rigidified inertia stack} of
$X\qu G$.  
The {\em graph trace} is the map
$$ \tau_{X \qu G}: \widehat{QH}_{\C^\times}(X \qu G) \to \Lambda_X^G, \quad \alpha \mapsto
\sum_{n,d} \frac{q^d}{n!}  \int_{\ol{\M}_n(C,X \qu G,d)} \ev^*
(\alpha,\ldots,\alpha) $$
and where the equivariant parameter for the $\C^\times$-action is
interpreted as a $\psi$-class at the corresponding marking.  The
relationship between the graph Gromov-Witten invariants of $X \qu G$
and Gromov-Witten invariants arising from stable maps to $X \qu G$ in
the toric case is studied in \cite{gi:eq}, \cite{lly:mp1}, and other
papers.

The goal of this paper is to construct, using only algebraic geometry, a
proper algebraic cobordism between the moduli stack of Mundet
semistable maps and the moduli stack of stable maps to the git
quotient with corrections coming from ``affine gauged maps''.  Affine
gauged maps are maps
$$ v: \P^1 \to X /G, \quad u(\infty) \in X^{\ss}/G, \quad
z_1,\ldots,z_n \in \P^1 - \{ \infty \}  \ \text{distinct}$$
where $\infty = [0,1] \in \P^1$ is the point ``at infinity'', modulo
{\em affine} automorphisms, that is, automorphisms of $\P^1$ which
preserve the standard affine structure on $\P^1 - \{ 0 \}$.  Denote by
$\ol{\M}^G_{n,1}(\bA,X)$ the compactified moduli stack of such affine
gauged maps to $X$; we use the notation $\bA$ to emphasize that the
equivalence only uses affine automorphisms of the domains.  A table
with the different kinds of stable maps to quotients stacks is
presented in Section \ref{table}.
 \label{scaledaffine} Evaluation at the markings defines a morphism
$$ \ev \times \ev_\infty: \ol{\M}^G_{n,1}(\bA,X,d) \to (X/G)^n \times
\ol{\cI}_{X \qu G} .$$
In the case $d = 0$, the moduli stack $\ol{\M}^G_{0,1}(\bA,X,d)$ is
isomorphic to $\ol{\cI}_{X \qu G}$ via evaluation at infinity. 
The {\em quantum Kirwan map} is the map
$$ \kappa_X^G: \widehat{QH}_G(X) \to QH_{\C^\times}(X \qu G) $$
defined as follows.  Let $\ev_{\infty,d}: \ol{\M}_n^G(\bA,X,d) \to
\ol{\cI}_{X \qu G}$ be evaluation at infinity restricted to affine
gauged maps, and
$$ \ev_{\infty,d,*}: H( \ol{\M}_n^G(\bA,X,d)) \otimes_\Q \Lambda_X^G \to
H_G(\ol{\cI}_{X \qu G}) \otimes_\Q \Lambda_X^G $$
push-forward using the virtual fundamental class.  The quantum Kirwan
map is
$$ \kappa_X^G: \widehat{QH}_G(X) \to QH_{\C^\times}(X \qu G), \quad \alpha \mapsto
\sum_{n,d} \frac{q^d}{n!} \ev_{\infty,d,*} \ev^* (\alpha,\ldots,
\alpha) .$$
As a formal map each term in the Taylor series of $\kappa_X^G$ is
well-defined on $QH_G(X)$, but in general the sum of terms may have
convergence issues.  The $q =0 $ specialization of $\kappa_X^G$ is the
Kirwan map to the cohomology of a git quotient studied in
\cite{ki:coh}.

The cobordism relating stable maps to the quotient with Mundet
semistable maps is itself a moduli stack of gauged maps with scaling
defined by allowing the linearization to tend towards infinity.  In
order to determine which stability condition to use, the source curves
must be equipped with additional data of a {\em scaling}: a section
$$ \delta:
\hat{C} \to \P \left(\omega_{\hat{C}/(C \times S)} \oplus
  \mO_{\hat{C}} \right) $$
of the projectivized relative dualizing sheaf.  If the section is
finite, one uses the Mundet semistability condition, while if infinite
one uses the stability condition on the target.  The possibility of
constructing a cobordism in this way was suggested by a symplectic
argument of Gaio-Salamon \cite{ga:gw}.  A {\em scaled gauged map} is a
map to the quotient stack whose domain is a curve equipped with a
section of the projectivized dualizing sheaf and a collection of
distinct markings: A datum
$$ \hat{C} \to S, \quad v:\hat{C} \to C \times X/G, \quad 
\delta: \hat{C} \to \P \left(\omega_{\hat{C}/(C \times S)} \oplus
  \mO_{\hat{C}} \right), \quad z_1,\ldots, z_n \in \hat{C} $$
where $\hat{C} \to S$ is a nodal curve of genus $g=\on{genus}{C}$, $v
= (P,u)$ is a morphism to the quotient stack $X/G$ that consists of a
principal $G$-bundle $P \to \hat{C}$ and a map $u: \hat{C} \to P
\times_G X$ of whose class projects to $[C] \in H_2(C)$, and $\delta$
is a section of the projectivization of the relative dualizing sheaf
$\omega_{\hat{C}/(C \times S)}$ satisfying certain properties.  In the
case that $X \qu G$ is an orbifold, the domain $\hat{C}$ is allowed to
have orbifold singularities at the nodes and markings and the morphism
is required to be representable.  The moduli stack of stable scaled
gauged maps $\ol{\M}^G_{n,1}(C,X,d)$ \label{scaledproj} with $n$
markings and class $d \in H_2^G(X,\Q)$ is equipped with a forgetful
map
$$ \rho: \ol{\M}^G_{n,1}(C,X,d) \to \ol{\M}_{0,1} \cong \P^1, \quad
[\hat{C}, u, \delta, \ul{z}] \mapsto \delta .$$
The fibers of $\rho$ over zero $0,\infty \in \P^1$ consist of either
Mundet semistable gauged maps, in the case $\delta = 0$, or stable
maps to the git quotient together with affine gauged maps, in the case
$\delta = \infty$: In notation,
\begin{multline} \label{fibers} \rho^{-1}(0) = \ol{\M}^G_n(C,X,d),
  \quad \rho^{-1}(\infty) = \bigcup_{d_0 + \ldots + d_r = d}
  \bigcup_{I_1 \cup \ldots \cup I_r =\{1,\ldots,n\} } \\ \left(
    \ol{\M}^{\fr}_{g,r}(C \times X \qu G,(1,d_0)) \times_{(\ol{\cI}_{X
        \qu G})^r} \prod_{j=1}^r \ol{\M}^G_{|I_j|,1} (\bA,X,d_j)
  \right) / (\C^\times)^r \end{multline}
where the superscipt $\fr$ indicates the inclusion of framings at the
tangent spaces to the markings, $(\C^\times)^r$ acts diagonally on the
framings and on the scalings, and we identify $H_2(X \qu G)$ as a
subspace of $H_2^G(X)$ via the inclusion $X \qu G \subset X / G$.  The
properness of these moduli stacks was argued via symplectic geometry
in \cite{qk2}.  The advantage of the symplectic proof is that the
compactness is somewhat more natural; it follows by a combination of
Gromov and Uhlenbeck compactness theorems as in Ott \cite{ott:remov}
and also applies in the presence of Lagrangian boundary conditions as
in Xu \cite{xu:compact}, for an arbitrary symplectic manifold.
However, in the setting of virtual fundamental classes constructed
algebraically, one prefers to stay in the framework of algebraic
geometry.  This is especially true in a subsequent paper of the first
and third authors in which we extend the results to quantum K-theory,
and in particular give presentations of the quantum K-theory ring of
toric stacks; the definition of quantum K-theory is at the moment
heavily algebraic, and there is no known definition purely in terms of
symplectic geometry.  Also, it is good to have several proofs.  In
this paper we give an algebraic proof of the following:

\begin{theorem}  \label{main} For any real $E > 0$, the union of components
$\ol{\M}_{n,1}^G(C,X,d)$, $\ol{\M}_n^G(C,X,d)$, and
  $\ol{\M}_{n,1}^G(\bA,X,d)$ with $(d, c_1^G(\ti{X})) < E$ is proper.
\end{theorem}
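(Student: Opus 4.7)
The plan is to verify the valuative criterion for properness of Deligne-Mumford type stacks \cite[Chapter 7]{la:ch}, which splits into three sub-problems: finite type (boundedness), separatedness, and the existence of limits after possibly passing to a finite extension of a discrete valuation ring. Since $\ol{\M}_{n,1}^G(C,X,d)$ is a cobordism whose special fibers at $0$ and $\infty$ consist of Mundet semistable maps and of stable maps to the git quotient together with affine gauged maps (as described in \eqref{fibers}), it suffices to prove properness of the scaled stack together with that of $\ol{\M}^G_{n,1}(\bA,X,d)$ on the affine side, after which the fiber-over-zero stack $\ol{\M}^G_n(C,X,d)$ is proper as the pullback along $\rho$ of a closed point of $\P^1$.

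For boundedness, I would invoke the Mundet stability condition combined with the bound $(d,c_1^G(\ti{X})) < E$. The Mundet condition forces the Harder-Narasimhan filtration of any reduction of the underlying principal $G$-bundle to have slopes controlled by the class $d$ paired against the equivariant first Chern class. Coupled with the Kontsevich stability of the domain curve (which bounds the number of bubble components in terms of $d$ and $n$), this gives a bound on the degree of associated vector bundles, hence boundedness of the space of bundle-section pairs. Here I would adapt the argument of Ciocan-Fontanine-Kim-Maulik \cite{cf:st} used in the quasimap setting, suitably modified to accommodate the scaling parameter: the scaling interpolates between the two stability regimes, and in both limits the required bound can be extracted from the presentation of the moduli stack as a GIT quotient in Schmitt \cite{schmitt:univ,schmitt:git}.

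The existence part of the valuative criterion is the heart of the argument. Given a family of scaled gauged maps over the generic point $\eta$ of a DVR $R$, I would first apply the twisted stable reduction theorem of Abramovich-Vistoli \cite{abramovich:compactifying} to extend the domain curve $\hat C_\eta$, together with its orbifold structure at nodes and markings, to a family $\hat C$ over $\Spec R$; this also handles the scaling section $\delta$ since it takes values in a proper target. One then has a principal $G$-bundle defined on $\hat C$ away from a codimension-two locus in the total space of the family, and here I would invoke the removal-of-singularities theorem of Colliot-Th\'el\`ene-Sansuc \cite{ciollot} to extend the $G$-bundle $P_\eta$ to a $G$-bundle $P$ on all of $\hat C$. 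With the bundle in hand, the section $u_\eta$ of the associated $X$-bundle extends since $X$ is projective and the associated bundle $P\times_G X \to \hat C$ is proper; if the naive extension fails to be representable or stable, additional bubbling of the domain curve is performed until a stable representative is obtained, using the standard stable reduction machinery for maps to Deligne-Mumford stacks. Separatedness follows from the uniqueness part of stable reduction combined with the uniqueness of the bundle extension on the smooth relative locus.

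The main obstacle is ensuring that the limit object produced above satisfies the Mundet semistability condition (or in the $\delta=\infty$ limit the stability of the target part together with the affine gauged map stability). The difficulty is that a naive limit of semistable bundles can be strictly unstable, and one must carry out elementary modifications of the bundle at codimension-one points of the special fiber to restore semistability, while simultaneously ensuring the section remains well-defined and the Kontsevich stability of the underlying marked curve is preserved. Controlling this interplay between bundle modifications and bubbling of the section component requires tracking how the Mundet weight of reductions behaves under the limit, and this is where the most delicate use of Schmitt's GIT construction enters: the stability of the limit is equivalent to non-vanishing of a Mundet-type invariant on the total family, which can be checked using the numerical criterion together with the boundedness established above.
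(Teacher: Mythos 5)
Your overall framework (valuative criterion, boundedness adapted from Ciocan-Fontanine-Kim-Maulik, bundle extension via Colliot-Th\'el\`ene-Sansuc) matches the paper's, but there is a genuine gap at the step you yourself flag as ``the main obstacle.'' You propose to take a naive limit of the family and then restore Mundet semistability of the special fiber by elementary modifications of the bundle, tracking how the Mundet weight behaves under the limit. This is a Langton-type semistable reduction for pairs, and you do not actually carry it out; for Mundet stability (which couples a parabolic-reduction weight on the bundle with a Hilbert--Mumford weight on the section, and must be compatible with Kontsevich stability of a possibly bubbling section) this would be a substantial independent argument. The paper avoids it entirely: over the locus of finite scaling, properness is obtained by composing the proper relative Givental morphism of Popa--Roth $\ol{\M}^G(C,X,d)\to\ol{\M}^{G,\quot}(C,X,d)$ with the projectivity of Schmitt's quot-scheme GIT compactification (plus Keel--Mori for the coarse space), so the limit exists in an already-compact GIT quotient and no semistability needs to be restored by hand. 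If you want to keep your route you must either prove the elementary-modification statement or replace it by the Givental-plus-Schmitt argument.

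A second, smaller issue is your order of operations in the $\delta\to\infty$ limit. You propose to first extend the domain curve by Abramovich--Vistoli stable reduction and then extend the bundle and section. But stable reduction is only available for maps to a \emph{proper} target: applied to the induced maps to $X\qu G$ it produces a limiting domain $\hat{\cC}_\infty^{X\qu G}$ that is in general \emph{not} the correct domain, because bubbling of the section inside the fibers of $P(X)\to C$ is invisible in $X\qu G$. The paper's three-step construction is designed around this: first take the stable-map limit in $X\qu G$, then \emph{collapse} the zero-scaling components so that the base points become non-singular points of the total space of the family (which is exactly the hypothesis needed to apply Colliot-Th\'el\`ene--Sansuc), and only then recover the missing bubbles by invoking properness of stable sections of the now-extended bundle $P(X)$. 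Extending the curve ``once and for all'' at the start, as you propose, does not produce a surface on which the bundle is defined away from finitely many smooth points, and it omits the final bubbling step.
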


\noindent The proof is a combination of boundedness arguments and
valuative criteria.  By integration over the moduli stack of stable
scaled gauged maps one obtains the following identity: Let
$\tau_X^{G,k}$ denote the gauged potential of \eqref{gtrace} defined
using the polarization $\ti{X}^k$ for $k$ a positive integer.
\begin{equation} \label{qwit} \lim_{k \to \infty} \tau_X^{G,k} = \tau_{X
    \qu G} \circ \kappa_X^G .\end{equation}
This is called in \cite{ga:gw} and \cite{qk1} the {\em adiabatic limit
  theorem}.

\section{Scaled curves}

Scaled curves are curves with a section of the projectivized dualizing
sheaf incorporated, intended to give complex analogs of spaces
introduced by Stasheff \cite{st:hs} such as the multiplihedron,
cyclohedron etc.  Recall from Deligne-Mumford \cite{dm:irr} and
Behrend-Manin \cite[Definition 2.1]{bm:gw} the definition of stable
and prestable curves.  A {\em prestable curve} over the scheme $S$ is
a flat proper morphism $\pi: C \to S$ of schemes such that the
geometric fibers of $\pi$ are reduced, connected, one-dimensional and
have at most ordinary double points (nodes) as singularities.  A {\em
  marked prestable curve} over $S$ is a prestable curve $\pi: C \to S$
equipped with a tuple $\ul{z} = (z_1,\ldots,z_n): S \to C^n$ of
distinct non-singular sections.  A {\em morphism} $p : C \to D$ of
prestable curves over $S$ is an $S$-morphism of schemes, such that for
every geometric point $s$ of $S$ we have (a) if $\eta$ is the generic
point of an irreducible component of $D_s$, then the fiber of $p_s$
over $\eta$ is a finite $\eta$-scheme of degree at most one, (b) if
$C'$ is the normalization of an irreducible component of $C_s$, then
$p_s(C')$ is a single point only if $C'$ is rational.  A prestable
curve is {\em stable} if it has finitely many automorphisms.  Denote
by $\ol{\M}_{g,n}$ the proper Deligne-Mumford stack of stable curves
of genus $g$ with $n$ markings \cite{dm:irr}.  The stack
$\ol{\MM}_{g,n}$ of prestable curves of genus $g$ with $n$ markings is
an Artin stack locally of finite type \cite[Proposition 2]{be:gw}.

\begin{figure}[ht]
\begin{picture}(0,0)%
\includegraphics{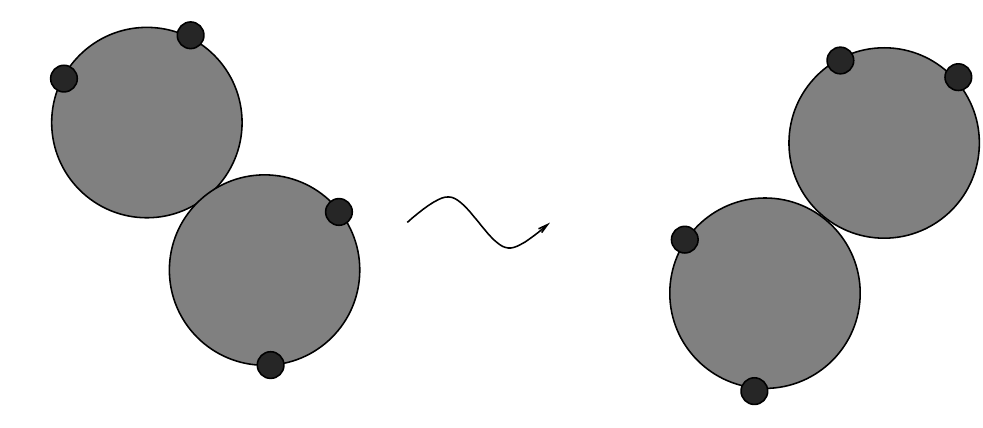}%
\end{picture}%
\setlength{\unitlength}{3947sp}%
\begingroup\makeatletter\ifx\SetFigFont\undefined%
\gdef\SetFigFont#1#2#3#4#5{%
  \reset@font\fontsize{#1}{#2pt}%
  \fontfamily{#3}\fontseries{#4}\fontshape{#5}%
  \selectfont}%
\fi\endgroup%
\begin{picture}(4709,2035)(2021,-1137)
\put(3107,-1080){\makebox(0,0)[lb]{\smash{{\SetFigFont{8}{9.6}{\rmdefault}{\mddefault}{\updefault}{\color[rgb]{0,0,0}$z_0$}%
}}}}
\put(5759,-1086){\makebox(0,0)[lb]{\smash{{\SetFigFont{8}{9.6}{\rmdefault}{\mddefault}{\updefault}{\color[rgb]{0,0,0}$z_0$}%
}}}}
\put(3535, 82){\makebox(0,0)[lb]{\smash{{\SetFigFont{8}{9.6}{\rmdefault}{\mddefault}{\updefault}{\color[rgb]{0,0,0}$z_3$}%
}}}}
\put(6555,682){\makebox(0,0)[lb]{\smash{{\SetFigFont{8}{9.6}{\rmdefault}{\mddefault}{\updefault}{\color[rgb]{0,0,0}$z_3$}%
}}}}
\put(5691,741){\makebox(0,0)[lb]{\smash{{\SetFigFont{8}{9.6}{\rmdefault}{\mddefault}{\updefault}{\color[rgb]{0,0,0}$z_2$}%
}}}}
\put(2036,686){\makebox(0,0)[lb]{\smash{{\SetFigFont{8}{9.6}{\rmdefault}{\mddefault}{\updefault}{\color[rgb]{0,0,0}$z_1$}%
}}}}
\put(3072,787){\makebox(0,0)[lb]{\smash{{\SetFigFont{8}{9.6}{\rmdefault}{\mddefault}{\updefault}{\color[rgb]{0,0,0}$z_2$}%
}}}}
\put(4937,-113){\makebox(0,0)[lb]{\smash{{\SetFigFont{8}{9.6}{\rmdefault}{\mddefault}{\updefault}{\color[rgb]{0,0,0}$z_1$}%
}}}}
\end{picture}%
\caption{Associativity divisor relation} 
\label{assoc} 
\end{figure} 

The following constructions give complex analogs of the spaces
constructed in Stasheff \cite{st:hs}.  For any family of possibly
nodal curves $C \to S$ we denote by $\omega_C$ the relative dualizing
sheaf defined for example in Arbarello-Cornalba-Griffiths
\cite[p. 97]{ar:alg2}.  Similarly for any morphism $\hat{C} \to C$ we
denote by $ \omega_{\hat{C}/C}$ the relative dualizing sheaf and
$\P(\omega_{\hat{C}/C} \oplus \mO_{\hat{C}}) \to \hat{C} $ the
projectivization.  A {\em scaling} is a section
$$ \delta: \hat{C} \to \P(\omega_{\hat{C}/C} \oplus \mO_{\hat{C}}),
\quad \P(\omega_{\hat{C}/C} \oplus \mO_{\hat{C}}) =
(\omega_{\hat{C}/C} \oplus \mO_{\hat{C}})^\times / \C^\times .$$
If $\hat{C} \to C$ is an isomorphism then $\omega_{\hat{C}/C}$ is
trivial:
$$ (\hat{C} \cong C) \implies (\P(\omega_{\hat{C}/C} \oplus
\mO_{\hat{C}}) \cong C \times \P^1) .$$
In this case a scaling $\delta$ is a section $C \to \P^1$, and
$\delta$ is required to be constant.  Thus the space of scalings on an
unmarked, irreducible curve is $\P^1$.

Scalings on nodal curves with markings are required to satisfy the
following properties.  First, $\delta$ should satisfy the {\em
  affinization} property that on any component $\hat{C}_i$ of
$\hat{C}$ on which $\delta$ is finite and non-zero, $\delta$ has no
zeroes and a single double pole.  In particular, this implies that in
the case $\hat{C} \cong C$, then $\delta$ is a constant section as in
the last paragraph, while on any component $\hat{C}_i$ of $\hat{C}$
with finite non-zero scaling which maps to a point in $C$, $\delta$
defines an affine structure on the complement of the pole.  To define
the second property, note that any morphism $\hat{C} \to C$ of class
$[C]$ defines a {\em rooted tree} whose vertices are components
$\hat{C}_i$ of $\hat{C}$, whose edges are nodes $w_j \in \hat{C}$, and
whose root vertex is the vertex corresponding to the component
$\hat{C}_0$ that maps isomorphically to $C$.  Let $\cT$ denote the set
of indices of {\em terminal} components $\hat{C}_i$ that meet only one
other component of $\hat{C}$:
$$ \cT = \{ i \ | \ \# \{ j \neq i | \hat{C}_j \cap \hat{C}_i \neq
\emptyset \} = 1 \} $$
as in Figure \ref{leaves}. The {\em bubble components} are the
components of $\hat{C}$ mapping to a point in $C$.
%
\begin{figure}[ht]
\begin{picture}(0,0)%
\includegraphics{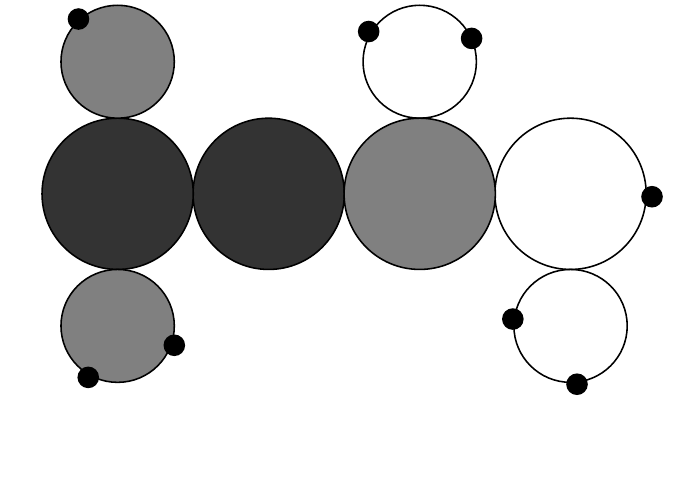}%
\end{picture}%
\setlength{\unitlength}{3947sp}%
\begingroup\makeatletter\ifx\SetFigFont\undefined%
\gdef\SetFigFont#1#2#3#4#5{%
  \reset@font\fontsize{#1}{#2pt}%
  \fontfamily{#3}\fontseries{#4}\fontshape{#5}%
  \selectfont}%
\fi\endgroup%
\begin{picture}(3263,2393)(1695,-4257)
\put(1923,-3849){\makebox(0,0)[lb]{\smash{{\SetFigFont{8}{9.6}{\rmdefault}{\mddefault}{\updefault}{\color[rgb]{0,0,0}$z_2$}%
}}}}
\put(3996,-2049){\makebox(0,0)[lb]{\smash{{\SetFigFont{8}{9.6}{\rmdefault}{\mddefault}{\updefault}{\color[rgb]{0,0,0}$z_5$}%
}}}}
\put(4943,-2849){\makebox(0,0)[lb]{\smash{{\SetFigFont{8}{9.6}{\rmdefault}{\mddefault}{\updefault}{\color[rgb]{0,0,0}$z_6$}%
}}}}
\put(2903,-2727){\makebox(0,0)[lb]{\smash{{\SetFigFont{8}{9.6}{\rmdefault}{\mddefault}{\updefault}{\color[rgb]{1,1,1}root}%
}}}}
\put(2690,-2902){\makebox(0,0)[lb]{\smash{{\SetFigFont{8}{9.6}{\rmdefault}{\mddefault}{\updefault}{\color[rgb]{1,1,1}component}%
}}}}
\put(1784,-1975){\makebox(0,0)[lb]{\smash{{\SetFigFont{8}{9.6}{\rmdefault}{\mddefault}{\updefault}{\color[rgb]{0,0,0}$z_3$}%
}}}}
\put(3210,-1975){\makebox(0,0)[lb]{\smash{{\SetFigFont{8}{9.6}{\rmdefault}{\mddefault}{\updefault}{\color[rgb]{0,0,0}$z_4$}%
}}}}
\put(3870,-3368){\makebox(0,0)[lb]{\smash{{\SetFigFont{8}{9.6}{\rmdefault}{\mddefault}{\updefault}{\color[rgb]{0,0,0}$z_8$}%
}}}}
\put(2602,-3622){\makebox(0,0)[lb]{\smash{{\SetFigFont{8}{9.6}{\rmdefault}{\mddefault}{\updefault}{\color[rgb]{0,0,0}$z_1$}%
}}}}
\put(4389,-3876){\makebox(0,0)[lb]{\smash{{\SetFigFont{8}{9.6}{\rmdefault}{\mddefault}{\updefault}{\color[rgb]{0,0,0}$z_7$}%
}}}}
\end{picture}%
\caption{A scaled marked curve}
\label{leaves}
\end{figure} 
For each terminal component $\hat{C}_i, i \in \cT$ there is a
canonical non-self-crossing path of components $\hat{C}_{i,0} =
\hat{C}_0,\ldots, \hat{C}_{i,k(i)} = \hat{C}_{i}$.  Define a partial
order on components by $\hat{C}_{i,j} \preceq \hat{C}_{i,k}$ for $j
\leq k$.  The {\em monotonicity property} requires that $\delta$ is
finite and non-zero on at most one of these (gray shaded) components,
say $\hat{C}_{i, f(i)}$, and
\begin{equation} \label{monotone} \delta |  \hat{C}_{i,j} = \begin{cases} \infty & j < f(i) \\
                                             0 & j > f(i) \end{cases}
  . \end{equation}
We call $\hat{C}_{i,f(i)}$ a {\em transition component}.  That is, the
scaling $\delta$ is infinite on the components before the transition
components and zero on the components after the transition components,
in the ordering $\preceq$.  See Figure \ref{leaves}.  In addition the
{\em marking condition } requires that the scaling is finite at the
markings:
$$\delta(z_i) < \infty,  \quad \forall i =1,\ldots, n .$$

\begin{definition} A {\em prestable scaled curve} with target a smooth
  projective curve $C$ is a morphism from a prestable map $\hat{C}$ to
  $C$ of class $[C]$ equipped with section $\delta$ and $n$ markings
  $\ul{z} = (z_1,\ldots, z_n)$ satisfying the affinization,
  monotonicity and marking properties.  Isomorphisms of prestable scaled
  curves are diagrams
$$ \begin{diagram} \node{ \hat{C}_1} \arrow{s} \arrow{e,t}{\varphi}
  \node{ \hat{C}_2 } \arrow{s} \\ \node{S_1} \arrow{e}
  \node{S_2} \end{diagram}, \quad (D\varphi^*) \varphi^*( \delta_2) =
\delta_1, \quad \varphi(z_{i,1}) = z_{i,2}, \ \ \forall i = 1,\ldots,
n $$
where the top arrow is an isomorphism of prestable curves and
$$ D\varphi^* : \varphi^* \P(\omega_{\hat{C}_2/C} \oplus \mO_{\hat{C}_2}) \to
\P(\omega_{\hat{C}_1/C} \oplus \mO_{\hat{C}_1})$$ 
is the associated morphism of projectivized relative dualizing
sheaves.  A scaled curve is {\em stable} if on each bubble component
$\hat{C}_i \subset \hat{C}$ (that is, component mapping to a point in
$C$) there are at least three special points (markings or nodes),
$$ (\delta| \hat{C}_i \in \{0, \infty\} ) \ \implies \ \# (( \{
z_i \} \cup \{ w_j \} ) \cap \hat{C}_i) \ge 3 $$
or the scaling is finite and non-zero and there are least two special points
$$ (\delta| \hat{C}_i \notin \{0, \infty\} ) \ \implies \ \# (( \{
z_i \} \cup \{ w_j \} ) \cap \hat{C}_i) \ge 2 .$$
\end{definition} 

Introduce the following notation for moduli spaces.  Let
$\ol{\MM}_{n,1}(C)$ denote the category of prestable $n$-marked scaled
curves and $\ol{\M}_{n,1}(C)$ the subcategory of stable $n$-marked
scaled curves.

The {\em combinatorial type} of a prestable marked scaled curve is
defined as follows.  Given such $(\hat{C},u: \hat{C} \to C,
\ul{z},\delta)$ Let $\Gamma$ be the graph whose vertex set
$\Ver(\Gamma)$ is the set of irreducible components of $C$, finite
edges $\Edge_{< \infty}(\Gamma)$ correspond to nodes, semi-infinite
edges $\Edge_\infty(\Gamma)$ correspond to markings, and equipped with
the labelling of semi-infinite edges by $\{ 1,\ldots , n\}$ a
distinguished {\em root vertex} $v_0 \in \Ver(\Gamma)$ corresponding
to the root component and a set of {\em transition vertices}
$\Ver^t(\Gamma) \subset \Ver(\Gamma)$ corresponding to the transition
components.  Graphically we represent a combinatorial type as a graph
with transition vertices shaded by grey, and the vertices lying on
three levels depending on whether they occur before or after the
transition vertices.  See Figure \ref{spider}.  Note that the
combinatorial type is functorial; in particular any automorphism of
prestable marked scaled curves induces an automorphism of the
corresponding type, that is, an automorphism of the graph preserving
the additional data.

We note that the graphical representation of the combinatorial type of
a curve can be viewed as the graph of a Morse/height function on the
curve. In general this gives a spider like figure with the root
component being the body of the spider. From this perspective the
paths used in the monotonicity property of scalings are the legs of
the spider.

\begin{figure}[ht]
\includegraphics[height=2in]{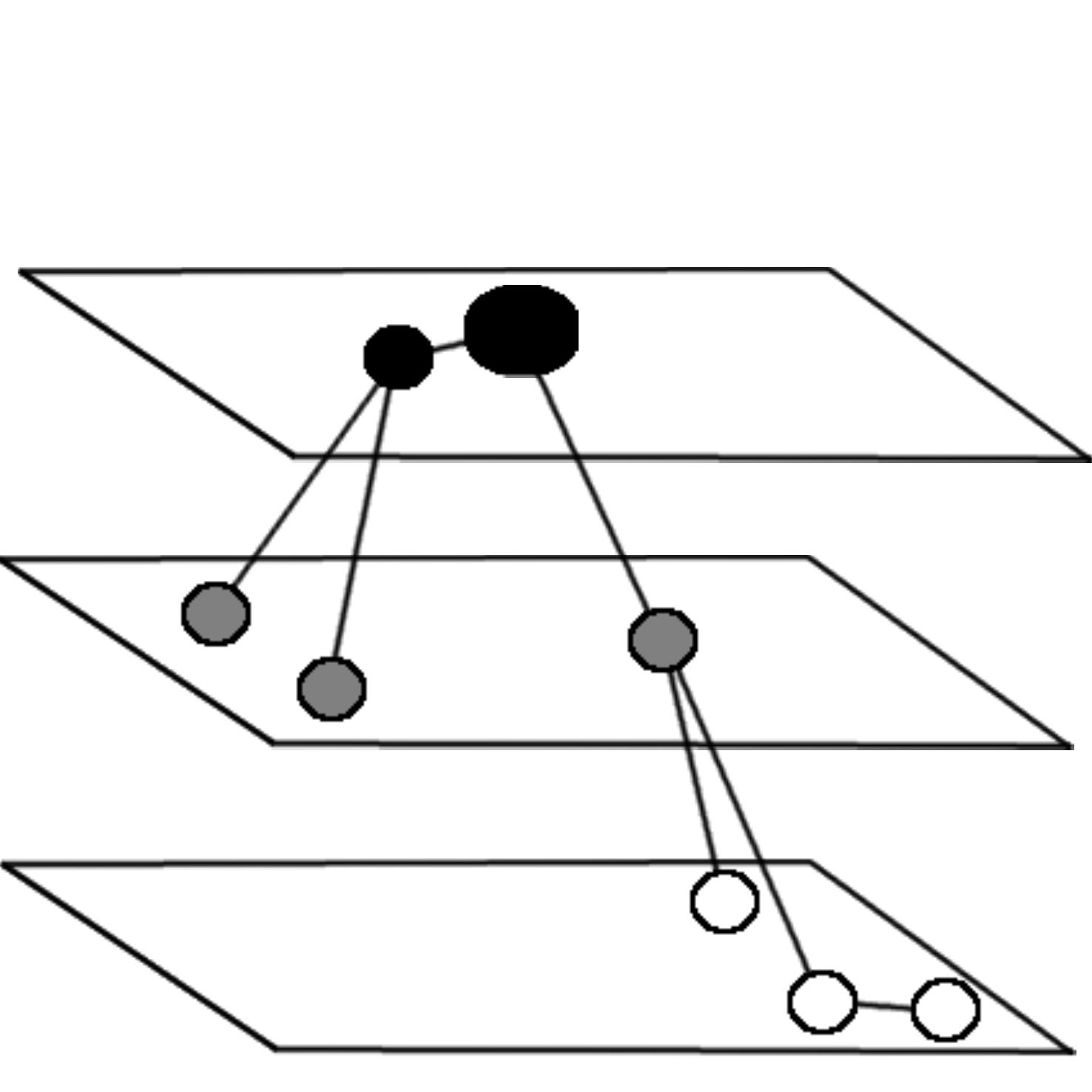}
\caption{Combinatorial type of a scaled marked curve} 
\label{spider}
\end{figure}

\begin{example}
\begin{enumerate}
\item For $n = 0$, no bubbling is possible and $\ol{\M}_{0,1}(C)$ is
  the projective line, $ \ol{\M}_{0,1}(C) \cong \P^1 .$
\item For $n = 1$, $\ol{\M}_{1,1}(C)$ consists of configurations
  $\M_{1,1}(C) \cong C \times \C$ with irreducible domain and finite
  scaling; a configurations $\ol{\M}_{1,1} \ssm \M_{1,1}$ with one
  component $\hat{C}_0 \cong C$ with infinite scaling $\delta |
  \hat{C}_0$, and another component $\hat{C}_1$ mapping trivially to
  $C$, equipped with a one-form $\delta | \hat{C}_1$ with a double
  pole at the node and a marking $z_1 \in \hat{C}_1$.  Thus $
  \ol{\M}_{1,1}(C) \cong C \times \P^1 .$
\item For $n= 2$, $\ol{\M}_{2,1}(C)$ consists of configurations
  $\M_{2,1}(C)$ with two distinct points $z_1, z_2 \in C$ and a
  scaling $\delta \in \P^1$; configurations $\M_{2,1,\Gamma_1}$ where
  the two points $z_1,z_2$ have come together and bubbled off onto a
  curve $z_1,z_2 \in \hat{C}_1$ with zero scaling $\delta |
  \hat{C}_1$, so that $\M_{2,1,\Gamma_1} \cong C \times \P^1$;
  configurations $\M_{2,1,\Gamma_2}$ with a root component $\hat{C}_0$
  with infinite scaling $\delta | \hat{C}_0$, and two components
  $\hat{C}_1,\hat{C}_2$ with non-trivial scalings $\delta | \hat{C}_1,
  \delta | \hat{C}_2$ containing markings $z_1 \in \hat{C}_1, z_2 \in
  \hat{C}_2$; a stratum $\M_{2,1,\Gamma_2}$ of configurations with a
  component $\hat{C}_1$ containing two markings $z_1,z_2 \in
  \hat{C}_1$ and $\delta | \hat{C}_1$ non-zero; a stratum
  $\M_{2,1,\Gamma_3}$ containing with three components, one
  $\hat{C}_0$ mapping isomorphically to $C$; one $\hat{C}_1$ with two
  nodes and a one form $\delta | \hat{C}_1$ with a double pole at the
  node attaching to $\hat{C}_0$; and a component $\hat{C}_2$ with two
  markings $z_1,z_2 \in \hat{C}_2$, a node, and vanishing scaling
  $\delta | \hat{C}_2$; and a stratum a stratum $\M_{2,1,\Gamma_4}$
  containing the root component $\hat{C}_0$, a component $\hat{C}_1$
  with infinite scaling with three nodes, and two components
  $\hat{C}_2, \hat{C}_3$ with finite, non-zero scaling, each
  containing a node and a marking.  The two evaluation maps at the
  markings, together with the forgetful map to $\ol{\M}_{0,1}(C)$,
  define an isomorphism $ \ol{\M}_{2,1}(C) \to C \times C \times \P^1
  .$
\end{enumerate} 
\end{example}

\begin{remark} The extension of the one-form in a family of scaled
  curves may be explicitly described as follows.  On each component of
  the limit, the one-form is determined by the limiting behavior of
  the product of deformation parameters for the nodes connecting that
  component to the root component of the limit:
  Let
$$\hat{C} \to S,\delta : \hat{C} \to \P(\omega_{\hat{C}/C \times S} \oplus
  \mO_{\hat{C}}),\ul{z}: S \to \hat{C}^n$$
  be a family of scaled curves over a 
  punctured curve $S = \ol{S} - \{ \infty \} $ 
and 
$\hat{C}_\infty$ a curve over $\infty$ extending the family $\hat{C}$.
Let $\Def(\hat{C}_\infty)/\Def_\Gamma(\hat{C}_\infty)$ denote the
deformation space of the curve $\hat{C}_\infty$ normal to the stratum
of curves of the same combinatorial type $\Gamma$ as $\hat{C}_\infty$.
This normal deformation space is canonically identified with the sum
of products of cotangent lines at the nodes
$$ \Def(\hat{C}_\infty)/\Def_\Gamma(\hat{C}_\infty) =  \sum_{w} 
T^\dual_w \hat{C}_{i_-(w)} \otimes T^\dual_w \hat{C}_{i_+(w)} $$
where $\hat{C}_{i_\pm(w)}$ are components of $\hat{C}_\infty $
adjacent to $w$, see \cite[p. 176]{ar:alg2}.  Over the deformation
space $\Def(\hat{C}_\infty)$ lives a semiversal family, universal if
the curve is stable.  Given family of curves $\hat{C} \to S$ as above
the curve $\hat{C}$ is obtained by pull-back of the semiversal family
by a map
$$ S  \to \sum_{w} T^\dual_w \hat{C}_{i_-(w)} \otimes
T^\dual_w \hat{C}_{i_+(w)}, \quad z \mapsto (\delta_w(z)) $$
describing the curves as local deformations (non-uniquely, since the
curves themselves may be only prestable.)  Let
$$ \hat{C}_0 = \hat{C}_{i,0}, \ldots, \hat{C}_{i,l(i)} := \hat{C}_i $$
denote the path of components from the root component, and
$$ w_{i,0},\ldots, w_{i,l(i)-1} \in \hat{C}_\infty $$
the corresponding sequence of nodes.  The nodes $w_{i,j}, w_{i,j+1}$
lie
%
in the same component $C_{i,j+1}$ and we
have a canonical isomorphism
$$ T_{w_{i,j}}^\dual C_{i,j+1} \cong T_{w_{i,j+1}}
C_{i,j+1} $$
corresponding to the relation of local coordinates $z_+ = 1/z_-$ near
$w_{i,j}$.  Deformation parameters for this chain lie in the space
%
\begin{multline} 
  \Hom(T_{w_{i,0}}^{\dual} \hat{C}_{i,0}, T_{w_{i,1}}^{\dual} \hat{C}_{i,1})
  \oplus \Hom(T_{w_{i,1}}^{\dual} \hat{C}_{i,1}, T_{w_{i,2}}^{\dual}
  \hat{C}_{i,2}) \ldots \\ \oplus \Hom(T_{w_{i,l(i) - 2}}^{\dual}
  \hat{C}_{i,l(i)-2}, T_{w_{i,l(i)-1}}^{\dual} \hat{C}_{i,l(i)-1})
    .\end{multline}
In particular, the product of deformation parameters
\begin{equation} 
\label{product}
\gamma_{w_{i,0}}(z) \cdots \ldots \cdot \gamma_{w_{i,l(i) -1}}(z)
\in \Hom(T_{w_{i,0}}^{\dual} \hat{C}_{i,0}, T_{w_{i,l(i)-1}}^{\dual} \hat{C}_{i,l(i)-1}) 
\end{equation}
is well-defined.  The product represents the {\em scale} at which the
bubble component $\hat{C}_i$ forms in comparison with $\hat{C}_0 =
\hat{C}_{i,0}$, that is, the ratio between the derivatives of local
coordinates on $\hat{C}_i$ and $\hat{C}_0$.  If $z$ is a point in
$\hat{C}_i$ then we also have a canonical isomorphism
$ T_z^\dual \hat{C}_i \to T_{w_{i,0}} \hat{C}_0 .$
The product \eqref{product} gives an isomorphism
$ T_z^\dual \hat{C}_i \to T_{w_0}^{\dual} \hat{C}_{0} .$
%
\begin{equation} \label{oneform} \delta | \hat{C}_i = \lim_{z \to 0}
  \delta(z) (\gamma_{w_{i,0}}(z) \cdots \ldots \cdot \gamma_{w_{i,l(i)
      -1}}(z)) \end{equation}
the ratio of the scale of the bubble component with the parameter
$\delta(z)^{-1}$.  This ends the Remark.
\end{remark}

One may view a scaled curve with infinite scaling on the root
component as a nodal curve formed from the root component and a
collection of bubble trees as follows.  

\begin{definition} \label{affine} An {\em affine prestable
  scaled curve} consists of a tuple $(C,\delta,\ul{z})$ where $C$ is a
  connected projective nodal curve, $\delta: C \to \P( \omega_C \oplus
  \mO_C)$ a section of the projectivized dualizing sheaf, and $\ul{z}
  = (z_0,\ldots,z_n)$ non-singular, distinct points, such that
\begin{enumerate} 
\item $\delta$ is monotone in the following sense: For each terminal
  component $\hat{C}_{i}, i \in \cT$ there is a canonical
  non-self-crossing path of components
$$\hat{C}_{l(i),0} = \hat{C}_0,\ldots,
  \hat{C}_{i,k(i)} = \hat{C}_i .$$  
The monotonicity condition is for any such non-self-crossing path of
components starting with a root component, that $\delta$ is finite and
non-zero on at most one of these {\em transition components}, say
$\hat{C}_{i, f(i)}$, and the scaling is infinite for all components
before the transition component and zero for components after the
transition component:
$$ \delta | \hat{C}_{i,j} = \begin{cases} \infty & j < f(i) \\ 0 & j
  > f(i) \end{cases} . $$
\item $\delta$ is infinite at $z_0$, and finite at $z_1,\ldots, z_n$.
\end{enumerate} 
A prestable affine scaled curve is {\em stable} if it has finitely
many automorphisms, or equivalently, if each component ${C}_i \subset
{C}$ has at least three special points (markings or nodes),
$$ (\delta| {C}_i \in \{0, \infty\} ) \ \implies \ \# (( \{
z_i \} \cup \{ w_j \} ) \cap {C}_i) \ge 3 $$
or the scaling is finite and non-zero and there are least two special points
$$ (\delta| {C}_i \notin \{0, \infty\} ) \ \implies \ \# (( \{
z_i \} \cup \{ w_j \} ) \cap {C}_i) \ge 2 .$$
\end{definition} 

We will see below in Theorem \ref{scaledproper} that scaled marked
curves have no automorphisms.  Examples of stable affine scaled curves
are shown in Figure \ref{affine}.  Denote the moduli stack of
prestable affine scaled curves resp. stable affine $n$-marked scaled
curves by $\ol{\MM}_{n,1}(\bA)$ resp. $\ol{\M}_{n,1}(\bA)$.

\begin{figure}[ht]
\begin{picture}(0,0)%
\includegraphics{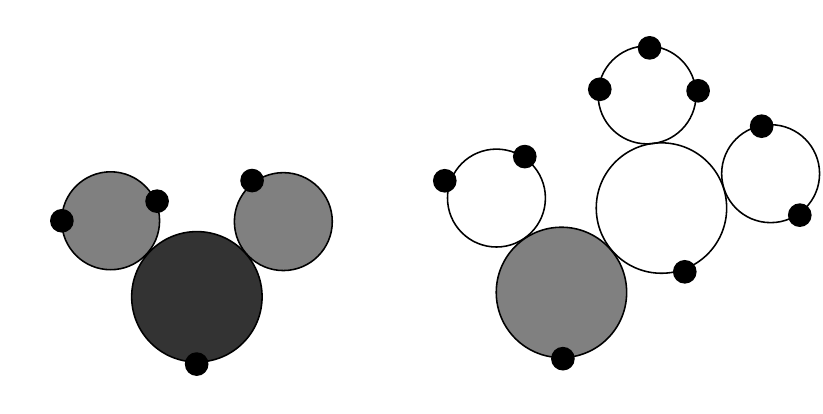}%
\end{picture}%
\setlength{\unitlength}{3947sp}%
\begingroup\makeatletter\ifx\SetFigFont\undefined%
\gdef\SetFigFont#1#2#3#4#5{%
  \reset@font\fontsize{#1}{#2pt}%
  \fontfamily{#3}\fontseries{#4}\fontshape{#5}%
  \selectfont}%
\fi\endgroup%
\begin{picture}(3942,1975)(1401,-3013)
\put(2230,-2970){\makebox(0,0)[lb]{\smash{{\SetFigFont{6}{7.2}{\rmdefault}{\mddefault}{\updefault}{\color[rgb]{0,0,0}$z_0$}%
}}}}
\put(3988,-2913){\makebox(0,0)[lb]{\smash{{\SetFigFont{6}{7.2}{\rmdefault}{\mddefault}{\updefault}{\color[rgb]{0,0,0}$z_0$}%
}}}}
\put(4619,-2548){\makebox(0,0)[lb]{\smash{{\SetFigFont{6}{7.2}{\rmdefault}{\mddefault}{\updefault}{\color[rgb]{0,0,0}$z_1$}%
}}}}
\put(5172,-2245){\makebox(0,0)[lb]{\smash{{\SetFigFont{6}{7.2}{\rmdefault}{\mddefault}{\updefault}{\color[rgb]{0,0,0}$z_2$}%
}}}}
\put(4969,-1500){\makebox(0,0)[lb]{\smash{{\SetFigFont{6}{7.2}{\rmdefault}{\mddefault}{\updefault}{\color[rgb]{0,0,0}$z_3$}%
}}}}
\put(4796,-1354){\makebox(0,0)[lb]{\smash{{\SetFigFont{6}{7.2}{\rmdefault}{\mddefault}{\updefault}{\color[rgb]{0,0,0}$z_4$}%
}}}}
\put(4478,-1129){\makebox(0,0)[lb]{\smash{{\SetFigFont{6}{7.2}{\rmdefault}{\mddefault}{\updefault}{\color[rgb]{0,0,0}$z_5$}%
}}}}
\put(3930,-1359){\makebox(0,0)[lb]{\smash{{\SetFigFont{6}{7.2}{\rmdefault}{\mddefault}{\updefault}{\color[rgb]{0,0,0}$z_6$}%
}}}}
\put(3800,-1599){\makebox(0,0)[lb]{\smash{{\SetFigFont{6}{7.2}{\rmdefault}{\mddefault}{\updefault}{\color[rgb]{0,0,0}$z_7$}%
}}}}
\put(3304,-1771){\makebox(0,0)[lb]{\smash{{\SetFigFont{6}{7.2}{\rmdefault}{\mddefault}{\updefault}{\color[rgb]{0,0,0}$z_8$}%
}}}}
\put(2449,-1734){\makebox(0,0)[lb]{\smash{{\SetFigFont{6}{7.2}{\rmdefault}{\mddefault}{\updefault}{\color[rgb]{0,0,0}$z_1$}%
}}}}
\put(2152,-1886){\makebox(0,0)[lb]{\smash{{\SetFigFont{6}{7.2}{\rmdefault}{\mddefault}{\updefault}{\color[rgb]{0,0,0}$z_2$}%
}}}}
\put(1416,-1896){\makebox(0,0)[lb]{\smash{{\SetFigFont{6}{7.2}{\rmdefault}{\mddefault}{\updefault}{\color[rgb]{0,0,0}$z_3$}%
}}}}
\end{picture}%

\caption{Examples of stable affine scaled curves}
\label{affine}
\end{figure}

\begin{theorem} \label{scaledproper} For each $n \ge 0$ and smooth
  projective curve $C$ the moduli stack $\ol{\M}_{n,1}(C)$
  resp. $\ol{\M}_{n,1}(\bA)$ of stable scaled affine curves is a
  proper scheme locally isomorphic to a product of a number of copies
  of $C$ with a toric variety.  The stack $\ol{\MM}_{n,1}(C)$
  resp. $\ol{\MM}_{n,1}(\bA)$ of prestable scaled curves is an Artin
  stack of locally finite type.
\end{theorem}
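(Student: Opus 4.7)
The plan is to argue in three stages: (i) stable scaled curves have no non-trivial automorphisms, so the stacks $\ol{\M}_{n,1}(C)$ and $\ol{\M}_{n,1}(\bA)$ are algebraic spaces (in fact schemes); (ii) the local structure is a product of copies of $C$ with an affine toric variety, obtained from the deformation-theoretic description of curves together with scalings; (iii) properness follows from the valuative criterion, with existence provided by stable reduction for the underlying curve combined with the explicit extension formula \eqref{oneform}. The prestable statement is then a bookkeeping consequence of the analogous statement for prestable marked curves.

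For rigidity, let $\varphi$ be an automorphism of a stable scaled curve $(\hat{C},\delta,\ul{z})$ over an algebraically closed field. On the root component $\hat{C}_0 \cong C$, $\varphi$ is the identity by the stability of the map $\hat{C}_0 \to C$ of class $[C]$. On any component $\hat{C}_i$ with $\delta|\hat{C}_i \in \{0,\infty\}$, the usual stability criterion (three or more special points on a rational component) rules out non-trivial automorphisms just as in Deligne-Mumford. On a transition component $\hat{C}_{i,f(i)}$, the affinization property forces $\delta$ to have a single double pole at the unique node connecting to the parent component, pinning down an affine structure; automorphisms are then translations, and the extra special point (marking or further node) demanded by stability leaves only the identity. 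Combined with the fact that $C$-bundles and $\P(\omega\oplus\mO)$-bundles descend, this reduces the moduli to an algebraic space.

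For the local structure, fix a geometric point with combinatorial type $\Gamma$ having root component $\hat{C}_0 \xrightarrow{\sim} C$ and bubble tree attached at nodes $w$. Étale-locally, the underlying marked prestable curve is given by the semiversal deformation, whose parameters are: positions of the markings on $\hat{C}_0$ (contributing factors of $C$) and gluing parameters $\gamma_w \in T^\dual_w\hat{C}_{i_-(w)}\otimes T^\dual_w\hat{C}_{i_+(w)}$ at each node. To specify the scaling one adds a parameter $\delta(0) \in \P^1$ for the root component. The monotonicity condition together with formula \eqref{oneform} expresses the scaling on each component $\hat{C}_{i,j}$ as the product $\delta(0)^{\pm 1}\prod \gamma_{w_{i,k}}$; requiring these products to lie in $\{0,\infty,\text{finite non-zero}\}$ in the prescribed pattern cuts out a toric subscheme of the product of gluing-parameter lines and $\P^1$. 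This gives the étale local description as a product of copies of $C$ with an affine toric variety.

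Properness is the main technical step. For the valuative criterion let $R$ be a DVR with fraction field $K$ and consider a stable scaled curve over $\Spec K$. After possibly a finite base change, Deligne-Mumford stable reduction \cite{dm:irr} extends the underlying marked prestable curve to $\Spec R$; here one must first stabilize (collapse components of $\hat{C}$ without enough special points after forgetting $\delta$), then apply the standard theorem. The scaling then extends: on the generic fibre $\delta$ is a section of the projectivized dualizing sheaf, and the limits on each component of the central fibre are computed by \eqref{oneform}, using the product of the vanishing orders of the gluing parameters $\gamma_{w}$ against the generic scaling. This extension automatically satisfies monotonicity and affinization (a double pole appears precisely on the transition components, because each terminal product \eqref{product} acquires a simple zero per node traversed past the transition). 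Uniqueness is forced by the fact that any two extensions differ by a section of $\omega\oplus\mO$ vanishing generically. Finally, if a component of the limit becomes unstable (e.g.\ a $\P^1$ with two special points and $\delta\in\{0,\infty\}$), contract it; this produces the unique stable limit. The main obstacle is precisely this step, namely verifying that the extension provided by \eqref{oneform} is compatible with the stability conditions on transition components — one must check carefully that the induced section on a putative transition component has the correct pole order to furnish an affine structure, which is what the product formula is designed to guarantee. Properness of $\ol{\M}_{n,1}(\bA)$ follows from the same argument with the additional marking $z_0$ tracked through the reduction. The prestable versions $\ol{\MM}_{n,1}(C)$ and $\ol{\MM}_{n,1}(\bA)$ are Artin stacks locally of finite type because they are obtained from $\ol{\MM}_{g,n+1}$ by adding the datum of a section of a projective bundle satisfying locally closed conditions, and $\ol{\MM}_{g,n+1}$ is Artin and locally of finite type by \cite{be:gw}.
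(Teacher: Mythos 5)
Your overall architecture --- rigidity from stability, local charts from gluing parameters constrained by the product formula, properness via the valuative criterion, and the prestable case by reduction to prestable curves --- is the same as the paper's, and your steps (i) and (ii) are essentially the argument given there (the paper's ``balanced'' condition \eqref{balanced} is exactly the toric relation you describe). The gap is in the existence half of the valuative criterion. You propose to take the Deligne--Mumford stable reduction of the underlying marked curve and then read off $\delta$ on each component of that limit from \eqref{oneform}. But the limit of a family of stable scaled curves is in general \emph{not} supported on the stable reduction of the underlying marked curve: new components must be inserted exactly where the scaling degenerates. Two instances: if $\delta \to \infty$ on an irreducible family $\hat{C} \cong C \times S$ with markings, the stable reduction of the underlying curve is still $C$, on which the constant section $\delta = \infty$ violates the marking condition $\delta(z_i) < \infty$, so bubbles carrying the markings with finite scaling must appear; and a transition component carrying only two nodes and no markings is stable as a scaled curve but unstable as a plain marked curve, so it is contracted by (hence absent from) the stable reduction, leaving a node at which $\delta$ jumps from $\infty$ to $0$ and therefore does not extend as a section of $\P(\omega \oplus \mO)$. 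Formula \eqref{oneform} only evaluates $\delta$ on components that are already present; it cannot manufacture the missing ones, and your contraction step only removes components, never adds them.

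The paper resolves this by taking the closure of the graph of $\delta$ inside $\P(\omega_{\hat{C}^{\st}/C} \oplus \mO)$, which replaces the offending nodes by fibers of this $\P^1$-bundle, and then computing $\omega_{\hat{C}/C} = \pi^* \omega_{\hat{C}^{\st}/C}(-D_0 - D_\infty)$ on the inserted components to see that the extended $\delta$ has no zeros at $D_0$ and a double pole at $D_\infty$ --- i.e., that each inserted component is a genuine transition component satisfying the affinization property. Your remark that ``the product formula is designed to guarantee'' the correct pole order identifies the right worry but does not supply this construction; without it the limit you build is either not a section over the nodal curve or fails the marking and affinization conditions.
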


\begin{proof}  
 Standard arguments on imply that $\ol{\M}_{n,1}(C)$ and
 $\ol{\MM}_{n,1}(C)$ are stacks, that is, categories fibered in
 groupoids satisfying effective descent for objects and for which
 morphisms form a sheaf.  An object $(\hat{C},\ul{z},\delta)$ of
 $\ol{\M}_{n,1}(C)$ over a scheme $S$ is a family of curves with
 sections.  Families of curves with markings and sections satisfy the
 gluing axioms for objects; similarly morphisms are determined
 uniquely by their pull-back under a covering.  Standard results on
 hom-schemes imply that the diagonal for $\ol{\MM}_{n,1}(C)$, hence
 also $\ol{\M}_{n,1}(C)$, is representable, see for example
 \cite[1.11]{dm:irr} for similar arguments, hence the stacks
 $\ol{\MM}_{n,1}(C)$ and $\ol{\M}_{n,1}(C)$ are algebraic.

 In preparation for showing that $\ol{\M}_{n,1}(C)$ is a variety we
 claim that for any object $(\hat{C},\ul{z},\delta)$ of the moduli
 stack $\ol{\M}_{n,1}(C)$ the automorphism group is trivial.  Let
 $\Gamma$ be the combinatorial type.  The association of $\Gamma$ to
 $(\hat{C},\ul{z},\delta)$ is functorial and any automorphism of
 $(\hat{C},\ul{z},\delta)$ induces an automorphism of $\Gamma$.  The
 graph $\Gamma$ is a tree with labelled semi-infinite edges, each
 vertex is determined uniquely by the partition of semi-infinite edges
 given by removing the vertex; hence the automorphism acts trivially
 on the vertices of $\Gamma$.  Each component has at least three
 special points, or two special points and a non-trivial scaling and
 so has trivial automorphism group fixing the special points.  Thus
 the automorphism is trivial on each component of $\hat{C}$.  The
 claim follows.

 The moduli space of stable scaled curves has a canonical covering by
 varieties corresponding to the versal deformations of prestable
 curves constructed by gluing.  Suppose that $(u: \hat{C} \to C,
 \ul{z}, \delta)$ is an object of $\ol{\M}_{n,1}(C)$ of combinatorial
 type $\Gamma$.  Let $\rho: \ol{\M}_{n,1}(C) \to \ol{\M}_{0,1}(C)
 \cong \P^1$ denote the forgetful morphism.  The locus $\rho^{-1}(\C)
 \subset \ol{\M}_{n,1}(C)$ of curves with finite scaling is isomorphic
 to $\ol{\M}_n(C) \times \C$, where the last factor denotes the
 scaling.  In the case that the root component has infinite scaling,
 let $\Gamma_1,\ldots,\Gamma_k$ denote the (possibly empty)
 combinatorial types of the bubble trees attached at the special
 points.  The stratum ${\M}_{n,1,\Gamma}(C)$ is the product of $C^k$
 with moduli stacks of scaled affine curves
 ${\M}_{n_i,1,\Gamma_i}(\bA)$ for $i =1,\ldots, k$, each isomorphic to
 an affine space given by the number of markings and scalings minus
 the dimension of the automorphism group $(n_i + 1) + 1 -
 \dim(\Aut(\P^1)) = n_i - 1$ \cite{mau:mult}.  Let
$$\gamma_e \in T^\dual_{w(e)} \hat{C}_{i_-(e)} \otimes T^\dual_{w(e)}
\hat{C}_{i_+(e)}, \quad e \in \Edge_{< \infty}(\Gamma)$$
be the deformation parameters for the nodes.  A collection of
deformation parameters $ \ul{\gamma} = ( \gamma_e)_{e \in
  \Edge(\Gamma)}$ is {\em balanced} if the signed product
\begin{equation} \label{balanced} \prod_{e \in P} \gamma_e^{\pm 1} \end{equation} 
of parameters corresponding to any non-self-crossing path $P$ between
transition components is equal to $1$, where the sign is positive for
edges pointing towards the root vertex and equal to $-1$ if the edge
is oriented away from it.  Let $Z_\Gamma$ denote the set of
deformation parameters satisfying the condition \eqref{balanced}.
Then there is a morphism
$$ \M_{n,1,\Gamma}(C) \times Z_\Gamma \to \ol{\M}_{n,1}(C) $$
described as follows.  Choose local \'etale coordinates $z_e^\pm$ on
the adjacent components to each node $w_e, \in \Edge_{<
  \infty}(\Gamma)$ and glue together the components using the
identifications $z_e^+ \mapsto \gamma_e/ z_e^-$, see for example
\cite[p. 176]{ar:alg2}, \cite[2.2]{ol:logtwist}.  Set the scaling on
the root component
$$ \delta = \prod_{e \in P} \gamma_e $$
where $P$ is a path of nodes from the root component to the transition
component, independent of the choice of component by \eqref{balanced}.
This gives a family $(\hat{C},u,\delta,\ul{z})$ of stable scaled
curves over $\M_{n,1,\Gamma}(C) \times Z_\Gamma$ and hence a morphism
to $\ol{\M}_{n,1}(C)$.  The family $(\hat{C},\ul{z},u,\delta)$ defines
a universal deformation of any curve of type $\Gamma$.  Indeed,
$(\hat{C},\ul{z})$ is a versal deformation of any of its prestable
fibers by \cite{ar:alg2}, and it follows that the family
$(\hat{C},\ul{z},u)$ is a versal deformation of any of its fibers
since there is a unique extension of the stable map on the central
fiber, up to automorphism.  The equation \eqref{product} implies that
any family of stable scaled curves satisfies the balanced relation
\eqref{balanced} between the deformation parameters for any family of
marked curves with scalings.  This provides a cover of
$\ol{\M}_{n,1}(C)$ by varieties.  It follows that $\ol{\M}_{n,1}(C)$
is a variety.

The stack of prestable scaled curves $\ol{\MM}_{n,1}(C)$ is an Artin
stack of locally finite type.  Charts for the stack
$\ol{\MM}_{n,1}(C)$, as in the case of prestable curves in
\cite{be:gw}, are given by using forgetful morphisms
$\ol{\M}_{n+k,1}(C) \to \ol{\MM}_{n,1}(C)$.  Since these morphisms
admit sections locally, they provide a smooth covering of
$\ol{\MM}_{n,1}(C)$ by varieties.

We check the valuative criterion for properness for
$\ol{\M}_{n,1}(C)$.  Given a family of stable scaled marked curves
over a punctured curve $S$ with finite scaling $\delta$
  $$(\hat{C}, u: \hat{C} \to C, \ul{z},\delta) \to S = \ol{S} - \{
\infty \} $$
we wish to construct there exists an extension over $\ol{S}$.  We
consider only the case $\hat{C} \cong C \times S$; the general case is
similar.  After forgetting the scaling $\delta$ and stabilizing we
obtain a family of stable maps to $C$ of degree $[C]$,
$$ (\hat{C}^{\st}, u: \hat{C}^{\st} \to C, \ul{z}^{\st}) \to 
\ol{S} - \{ \infty \} .$$
By properness of the stack $\ol{\M}_n(C)$ of stable maps to $C$, this
family extends over the central fiber $\infty$ to give a family over
$\ol{S}$.  The section $\delta$ of $\omega_{\hat{C}^{\st}/C}$ defines
an extension over $\ol{S}$ except possibly at the nodes.  Here there
are possible irremovable singularities corresponding to the following
situation: suppose that $\hat{C}_0,\ldots \hat{C}_i$ is a chain of
components in the curve at the central fiber, with $\hat{C}_0 \cong C$
the root component.  Suppose that $\hat{C}_{i}, \hat{C}_{i+1}$
are adjacent component with $\delta$ infinite on $\hat{C}_i$ and zero
on $\hat{C}_{i+1}$.  Taking the closure of the graph of $\delta$ gives
a family $\hat{C}$ of curves over $C$ given by replacing some of the
nodes of $\hat{C}^{\st}$ with fibers of $\P(\omega_{\hat{C}^{\st}/C}
\oplus \mO_{\hat{C}^{\st}})$ over the node.  The relative cotangent
bundle of $\hat{C}$ is related to that of $\hat{C}^{\st}$ by a twist
at $D_0,D_\infty$: If $\pi: \hat{C} \to \hat{C}^{\st}$ denotes the
projection onto $\hat{C}$ then on the components of $\hat{C}$
collapsed by $\pi$ we have
$$ \omega_{\hat{C}/C} = \pi^* \omega_{\hat{C}^{\st}/C} (-D_0 
-D_\infty) $$
where $D_0,D_\infty$ are the inverse images of the sections at zero
and infinity in $\P( \omega_{\hat{C}^{\st}/C} \oplus
\mO_{\hat{C}^{\st}})$.  Abusing notation $\omega_{\hat{C}_i^{\st}/C}
(-D_0 ) = \omega_{\hat{C}_i^{\st}/C}$ resp.
$\omega_{\hat{C}_i^{\st}/C} (-D_\infty ) = \omega_{\hat{C}_i^{\st}/C}$
on components $\hat{C}_i^{\st}$ contained in $D_0$ resp. $D_\infty$.
The extension of $\delta$ to a rational section of $\pi^*
\omega_{\hat{C}^{\st}/C}$ has, by definition a zero at
$\delta^{-1}(D_0)$ and a pole at $\delta^{-1}(D_\infty)$.  Hence the
extension of $\delta$ to a section of $\pi^*
\omega_{\hat{C}^{\st}/C}(-D_0 - D_\infty)$ has no zeroes at $D_0$ and
a double pole at $D_\infty$.  This implies that $\delta$ extends
uniquely as a section of $\P( \omega_{\hat{C}/C} \oplus
\mO_{\hat{C}})$ to all of $\ol{S}$.

By the construction \eqref{oneform}, the extension of $\delta$
satisfies the monotonicity condition \eqref{monotone}.  Indeed suppose
that a component $\hat{C}_i$ is further away from a component
$\hat{C}_j$ in the path of components from the root component
$\hat{C}_0$.  Since all deformation parameters $\gamma_{w_{i,k}}(z)$ are
approaching zero, from \eqref{oneform}, at most one of the limits
$\delta | \hat{C}_i, \delta | \hat{C}_j$ can be finite, and
$$ \begin{cases} \delta | \hat{C}_i   \ \text{finite}   \ \implies \delta | \hat{C}_j \ \ \text{zero}  \\
                 \delta | \hat{C}_j   \ \text{finite}   \ \implies \delta | \hat{C}_i \ \ \text{infinite} .
\end{cases}. $$ 
The condition \eqref{monotone} follows.
\end{proof}

\section{Mumford stability}

In this section we review the relationship between the stack-theoretic
quotient and Mumford's geometric invariant theory quotient
\cite{mu:ge}.  First we introduce various Lie-theoretic notation. Let
$G$ be a connected complex reductive group with Lie algebra $\g$.
When $G$ is abelian (so a complex torus) we denote by
$$ \g_\Z = \{ D \phi(1) \in \g \ | \ \phi \in \Hom(\C^\times,G) \},
\quad \g_\Q = \g_\Z \otimes_\Z \Q$$
the {\em coweight lattice} of derivatives of one-parameter subgroups
resp. rational one-parameter subgroups.  Dually denote by
$$ \g_\Z^\dual = \{ D \chi \in \g^\dual \ | \ \chi \in
\Hom(G,\C^\times) \}, \quad \g_\Q^\dual = \g_\Z^\dual \otimes_\Z \Q$$
the {\em weight lattice} of derivatives of characters of $G$ and the set
of {\em rational weights}, respectively.  If $G$ is non-abelian then
we still denote by $\g_\Q$ the set of derivatives of rational
one-parameter subgroups.

The targets of our maps are quotient stacks defined as follows.  Let
$X$ be a smooth projective $G$-variety.  Let $X/G$ denote the quotient
stack, that is, the category fibered in groupoids whose fiber over a
scheme $S$ has objects pairs $v = (P,u)$ consisting of a principal
$G$-bundle $P \to S$ and a section $u: S \to P \times_G X$; and whose
morphisms are given by diagrams
$$ \begin{diagram} \node{P_1} \arrow{s} \arrow{e,t}{\phi} \node{P_2}
  \arrow{s} \\ \node{S_1}  \arrow{e,b}{\psi} \node{S_2} \end{diagram}, \quad \phi(X) \circ u_1 = u_2 \circ \psi $$
where $\phi(X) : P_1(X) \to P_2(X)$ denotes the map of associated
fiber bundles \cite{dm:irr},
\href{http://stacks.math.columbia.edu/tag/04UV}{Tag 04UV}
\cite{dejong:stacks}.

Mumford's geometric invariant theory quotient \cite{mu:ge} is
traditionally defined as the projective variety associated to the
graded ring of invariant sections of a linearization of the action in
the previous paragraph.  Let $\ti{X} \to X$ be a linearization, that
is, ample $G$-line bundle.  Then
$$ X \qu G := \Proj \left( \oplus_{k \ge 0} H^0(\ti{X}^k)^G \right)
.$$
Mumford \cite{mu:ge} realizes this projective variety as the quotient
of a {\em semistable locus} by an equivalence relation.  The
semistable locus consists of points $x \in X$ such that some tensor
power $\ti{X}^k, k > 0 $ of $\ti{X}$ has an invariant section
non-vanishing at $x$, while the unstable locus is the complement of
the semistable locus:
$$ X^{\ss} = \{ x \in X \ | \ \exists k > 0, \sigma \in
H^0(\ti{X}^k)^G, \quad \sigma(x) \neq 0 \}, 
\quad  X^{\us} := X - X^{\ss} .$$
A point $x \in X$ is {\em polystable} if its orbit is closed in the
semistable locus $\ol{Gx \cap X^{\ss}} = Gx \cap X^{\ss}$.  A point $x
\in X$ is {\em stable} if it is polystable and the stabilizer $G_x$ of
$x$ is finite.  In Mumford's definition the git quotient is the
quotient of the semistable locus by the {\em orbit equivalence
  relation}
$$ (x_1 \sim x_2) \iff \ol{Gx_1}\cap \ol{Gx_2} \cap X^{\ss} \neq
\emptyset. $$
Each semistable point is then orbit-equivalent to a unique polystable
point.  However, here we define the git quotient as the
stack-theoretic quotient
$$ X \qu G := X^{\ss}/G.$$ 
We shall always assume that $X^{\ss}/G$ is a Deligne-Mumford stack
(that is, the stabilizers $G_x$ are finite) in which case the coarse
moduli space of $X^{\ss}/G$ is the git quotient in Mumford's sense.
The Luna slice theorem \cite{luna:slice} implies that $X^{\ss}/G$ is
\'etale-locally the quotient of a smooth variety by a finite group,
and so has finite diagonal. By the Keel-Mori theorem \cite{km:quot},
explicitly stated in \cite[Theorem 1.1]{conrad:kl}, the morphism from
$X^{\ss}/G$ to its coarse moduli space is proper.  Since the coarse
moduli space of $X^{\ss}/G$ is projective by Mumford's construction,
it is proper, hence $X^{\ss}/G$ is proper as well.

Later we will need the following observation about the unstable locus.
As the quotient $X \qu G$ is non-empty, there exists an ample divisor
$D$ containing the unstable locus: take $D$ to be the vanishing locus
of any non-zero invariant section of $\ti{X}^k$ for some $k > 0$:
\begin{equation} \label{invsection} D = \sigma^{-1}(0), \quad \sigma \in H^0(\ti{X}^k)^G - \{ 0 \} .\end{equation} 

The Hilbert-Mumford numerical criterion \cite[Chapter 2]{mu:ge}
provides a computational tool to determine the semistable locus: A
point $x \in X$ is $G$-semistable if and only if it is
$\C^\times$-semistable for all one-parameter subgroups $\C^\times \to
G$.  Given a rational element $\lambda\in \g_\Z$ denote the
corresponding one-parameter subgroup $\C^\times \to G, \ z \mapsto
z^\lambda$.  Denote by
$$ x_\lambda := \lim_{z \to 0}  z^\lambda x $$
the limit under the one-parameter subgroup.  Let $\mu(x,\lambda) \in
\Z$ be the weight of the linearization $\ti{X}$ at $x_\lambda$ defined
by
$$ z \ti{x} = z^{\mu(x,\lambda)} \ti{x}, \quad \forall z \in
\C^\times, \ti{x} \in \ti{X}_{x_\lambda} .$$
By restricting to the case of a projective line one sees that the
point $x \in X$ is semistable if and only if $ \mu(x,\lambda) \leq 0$
for all $\lambda \in \g_\Z.$ Polystability is equivalent to
semistability and the additional condition $ \mu(x,\lambda) = 0 \iff
\mu(x,-\lambda) =0 .$ Stability is the condition that $ \mu(x,\lambda)
< 0$ for all $\lambda \in \g_\Z - \{ 0 \} .$

The Hilbert-Mumford numerical criterion \cite[Chapter 2]{mu:ge} can be applied explicitly to actions on projective spaces as follows.
Suppose that $G$ is a torus and $X = \P(V)$ the projectivization of a
vector space $V$.  Let
$\ti{X} = \mO_X(1) \otimes \C_\theta $ 
be the $G$-equivariant line bundle given by tensoring the hyperplane
bundle $\mO_X(1)$ and the one-dimensional representation $\C_\theta$
corresponding to some weight $\theta \in \g_\Z^\dual$. Recall if $p
\in X$ is represented by a line $l \subset V$ then the fiber of
$\mO_X(1) \otimes \C_\theta$ at $p$ is $l^\dual\otimes \C_\theta$. In
particular if $z^\lambda$ fixes $p$ then $z^\lambda$ scales $l$ by
some $z^{\mu(\lambda)}$, so that $z^\lambda \ti{x} =
z^{-\mu(\lambda)+\theta(\lambda)}\ti{x}$. Let $k = \dim(V)$ and
decompose $V$ into weight spaces $V_1,\ldots, V_k$ with weights
$\mu_1,\dots,\mu_k\in \g_\Z^\dual .$ Identify
$$H^2_G(X) \cong H^2_{\C^\times \times G}(V) \cong \Z \oplus
\g_\Z^\dual $$
Under this splitting the first Chern class $c_1^G(\ti{X})$ becomes
identified up to positive scalar multiple with the pair
 \begin{equation}\label{eqiv c_1}
 c_1^G(\ti{X}) \mapsto (1,\theta) \in \Z \oplus \g_\Z^\dual.
 \end{equation}
The following is essentially \cite[Proposition 2.3]{mu:ge}.

\begin{lemma} The semistable locus for the action of a torus $G$  on the projective space
$X =P(V)$ with weights $\mu_1,\ldots, \mu_k$ and linearization shifted
  by $\theta$ is $ X^{\ss} = \P(V)^{\ss} = \{ [x_1,\ldots,x_k] \in
  \P(V) \ | \ \on{hull} ( \{ \mu_i | x_i \neq 0 \}) \ni \theta \} .$ A
  point $x $ is polystable iff $\theta$ lies in the interior of the
  hull above, and stable if in addition the hull is of maximal
  dimension.
\end{lemma}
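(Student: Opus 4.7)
The plan is to apply the Hilbert--Mumford numerical criterion recalled in the paragraph preceding the statement, compute the weight $\mu([x],\lambda)$ of the linearization at the limit point in terms of the data $(\mu_i,\theta)$, and then recast the resulting family of inequalities over $\lambda\in\g_\Q$ into the advertised convex-hull condition via the support-function characterization of convex hulls.

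For the explicit computation, decompose $x=\sum_i x_i$ along the weight decomposition $V=\bigoplus_i V_{\mu_i}$, so that $z^\lambda$ acts by $z^{\mu_i(\lambda)}$ on the $i$-th summand. Normalizing the homogeneous coordinates by the extremal power of $z$ identifies $[x]_\lambda$ with the projectivization of the partial sum of the $x_i$ over the set $I(x,\lambda)$ of indices where $\mu_i(\lambda)$ is extremal among $\{\mu_j(\lambda):x_j\neq 0\}$; the corresponding line in $V$ sits entirely in a single weight space, of weight $m(x,\lambda):=\min_{x_j\neq 0}\mu_j(\lambda)$. Combined with the identification of $\mO(1)_{[x']}$ as the dual of the tautological line and the twist by $\C_\theta$ from \eqref{eqiv c_1}, this yields a formula of the shape
$$\mu([x],\lambda)\ =\ \pm\bigl(\theta(\lambda)-m(x,\lambda)\bigr),$$
with the overall sign fixed by the convention for $\mu(x,\lambda)$ declared before the lemma.

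The Hilbert--Mumford inequality, applied to both $\lambda$ and $-\lambda$ and using the identity $m(x,-\lambda)=-\max_{x_j\neq 0}\mu_j(\lambda)$, collapses to the symmetric sandwich
$$\min_{x_i\neq 0}\mu_i(\lambda)\ \leq\ \theta(\lambda)\ \leq\ \max_{x_i\neq 0}\mu_i(\lambda)\qquad\forall\,\lambda\in\g_\Q.$$
The support-function characterization of closed convex hulls (one direction trivial, the other a direct application of the separating hyperplane theorem) identifies this precisely with $\theta\in\on{hull}(\{\mu_i:x_i\neq 0\})$, proving the semistability assertion. For polystability, the extra Hilbert--Mumford requirement that $\mu([x],\lambda)=0$ implies $\mu([x],-\lambda)=0$ translates, in the support-function language, into the condition that $\theta(\lambda)$ attains $\min\mu_i(\lambda)$ only when it simultaneously attains $\max\mu_i(\lambda)$; equivalently, $\theta$ lies in the relative interior of the hull. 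For stability, strict positivity of $\mu([x],\lambda)$ for every $\lambda\neq 0$ rules out any nonzero $\lambda$ in the annihilator of the affine span of the hull, forcing the hull to be of maximal dimension in $\g_\Q^\dual$, in which case the relative and ordinary interiors coincide.

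The only substantive step is the convex-geometric equivalence between the support-function sandwich and containment in the closed convex hull, which is elementary from the separating hyperplane theorem. The remaining care is bookkeeping: tracking the $\pm$ sign in the Hilbert--Mumford convention and carefully distinguishing the relative interior from the ordinary interior in the polystable-versus-stable dichotomy.
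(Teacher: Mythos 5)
Your proposal is correct and follows essentially the same route as the paper: compute the Hilbert--Mumford weight as $\theta(\lambda)$ shifted by the extremal value of $\mu_i(\lambda)$ over the indices with $x_i\neq 0$, convert the resulting family of inequalities into the hull condition via the support-function characterization, and derive polystability (relative interior) and stability (maximal-dimensional hull, via the annihilator of the span being the stabilizer's Lie algebra) exactly as in the paper's argument. The only cosmetic difference is your explicit sandwich $\min\le\theta(\lambda)\le\max$, which the paper obtains implicitly by quantifying over all $\lambda$ and $-\lambda$.
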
 

\begin{proof}  The Hilbert-Mumford weights are computed as follows.  For
any non-zero $\lambda \in \g_\Z$, let
$$\nu(x,\lambda) := \min_i \left\{ - \mu_i(\lambda), x_i \neq 0 \right\} .$$
Then 
\begin{eqnarray*}
 z^\lambda [ x_1,\ldots, x_k ] &=& [ z^{\mu_1(\lambda)} x_1,\ldots, z^{\mu_k(\lambda)}
  x_k] \\
&=&  [ z^{\mu_1(\lambda) + \nu(x,\lambda)} x_1,\ldots,
  z^{\mu_k(\lambda) + \nu(x,\lambda)} x_k ] \end{eqnarray*}
and 
$$ (-\mu_i(\lambda) \neq \nu(x,\lambda)) \ \implies \ \left(\lim_{z \to
  0} z^{\mu_i(\lambda) + \nu(x,\lambda)} = 0 \right) .$$
Let
$$ x_\lambda := \lim_{z \to 0} z^{\lambda} x = \lim_{z \to 0}
[z^{\mu_i(\lambda)} x_i ]_{i =1}^k \in X $$
Then
$$ x_\lambda = [x_{\lambda,1},\ldots, x_{\lambda,k}],\quad
x_{\lambda,i} = \begin{cases} x_i & -\mu_i(\lambda) = \nu(x,\lambda)
  \\ 0 & \text{otherwise} \end{cases} .$$
The Hilbert-Mumford weight is therefore
\begin{equation} \label{hm} 
\mu(x,\lambda) = \nu(x,\lambda) + (\theta,\lambda) .\end{equation} 
By the Hilbert-Mumford criterion, the point $x$ is semistable if and
only if
$$ \nu(x,\lambda) := \min \{ - \mu_i(\lambda) \ | \ x_i \neq 0\} \leq
(- \theta,\lambda), \quad \forall \lambda \in \g_\Z - \{ 0 \} .$$
That is,
$$  (x \in X^{\ss}) \iff ( \theta \in \on{hull} 
\{ \mu_i \ |  \  x_i \neq 0\} ) .$$
This proves the claim about the semistable locus.  To prove the claim
about polystability, note that $\mu(x,\lambda) = 0 = \mu(x,-\lambda)$
implies that the minimum $\nu(x,\lambda)$ is also the maximum.  Thus
the only affine linear functions $\xi: \g^\dual \to \R$ which vanish
at $\theta$ are those $\xi$ that are constant on the hull of $\mu_i$
with $x_i$ nonzero.  This implies that the span of $\mu_i$ with $x_i$
non-zero contains $\theta$ in its relative interior.  The stabilizer
$G_x$ of $x$ has Lie algebra $\g_x$ the annihilator of the span of the
hull of the $\mu_i$ with $x_i \neq 0$.  So the stabilizer $G_x$ is
finite if and only if the span of $\mu_i$ with $x_i \neq 0$ is of
maximal dimension $\dim(G)$. This implies the claim on stability.
\end{proof}

We introduce the following notation.  As above $G$ is a connected
complex reductive group with maximal torus $T$ and $\g,\t$ are the Lie
algebras of $G,T$ respectively.  Fix an invariant inner product
$( \ , \ ):\g \times \g \to \C$ on $\g$ inducing an identification
$\g \to \g^\dual$. By taking a multiple of the basic inner product on
each factor we may assume that the inner product induces an
identification $\t_\Q \to \t_\Q^\dual$.  Denote by
$$\Vert \cdot \Vert: \q_\Q \to \R_{\ge 0}, \quad \Vert \xi \Vert : =
(\xi , \xi)^{1/2}$$
the norm with respect to the induced metric.

Next recall the theory of Levi decompositions of parabolic subgroups
from Borel \cite[Section 11]{bo:lag}.  A parabolic subgroup $Q$ of $G$
is one for which $G/Q$ is complete, or equivalently, containing a
maximal solvable subgroup $B \subset G$.  Any parabolic $Q$ admits a
Levi decomposition $ Q = L(Q) U(Q) $ where $L(Q)$ denote a maximal
reductive subgroup of $Q$ and $U(Q)$ is a maximal unipotent subgroup.
Let $\l(Q),\u(Q)$ denote the Lie algebras of $L(Q), U(Q)$.  Let $\g =
\t \oplus \bigoplus_{ \alpha \in R(G)} \g_\alpha$ denote the root
space decomposition of $\g$, where $R(G)$ is the set of roots.  The
Lie algebras $\l(Q),\u(Q)$ decompose into root spaces as
$$ \q = \t \oplus \bigoplus_{\alpha \in R(Q)} \g_\alpha, \quad \l(Q) =
\t \oplus \bigoplus_{\alpha \in R(Q) \cap -R(Q)} \g_\alpha, \quad
\u(Q) = \q/\l(Q) $$
where $R(Q) \subset R(G)$ is the set of roots for $\l(Q)$.  Let
$\z(Q)$ denote the center of $\l(Q)$ and
$$\z_+(Q) = \{ \xi \in \z(Q) \ | \ \alpha(\xi) \ge 0, \ \forall \alpha \in R(Q) \} $$ 
the {\em positive chamber} on which the roots of $Q$ are non-negative.
The Levi decomposition induces a homomorphism
\begin{equation} \label{piq} \pi_Q: Q \to Q/U(Q) \cong L(Q) .\end{equation}
This homomorphism has the following alternative description as a
limit.  Let $\lambda \in \z_+(Q) \cap \g_\Q$ be a positive coweight
and
$$ \phi_\lambda: \C^\times \to L(Q), \quad z \mapsto z^\lambda $$
the corresponding central one-parameter subgroup.  Then
$$ \pi_Q(g) = \lim_{z \to 0} \Ad(z^\lambda) g
.$$
In the case of the general linear group in which the parabolic
consists of block-upper-triangular matrices, this limit projects out
the off-block-diagonal terms.

The unstable locus admits a stratification by maximally destabilizing
subgroups, as in Hesselink \cite{hess:strat}, Kirwan \cite{ki:coh},
and Ness \cite{ne:st}.  The stratification reads
\begin{equation} \label{kn} X = \bigcup_{\lambda \in \cC(X)} X_\lambda, \quad X_\lambda = G
\times_{Q_\lambda} Y_\lambda, \quad Y_\lambda \mapsto Z_\lambda
\ \text{affine fibers} \end{equation}
where $Y_\lambda,Z_\lambda,Q_\lambda,\cC(X)$ are defined as
follows. For each fixed point component $\ol{Z}_\lambda$ of
$z^{\lambda}$ there exist a weight $\mu(\lambda)$ so $z^{\lambda}$
acts on $\ti{X} | Z_\lambda$ with weight $\mu(\lambda)$:
$$ z^\lambda \ti{x} = z^{\mu(\lambda)} \ti{x}, \quad \forall \ti{x}
\in \ti{X} | Z_\lambda .$$
The group $G_\lambda/\C^\times_\lambda$ acts on $\ol{Z}_\lambda$ and
we denote by $Z_\lambda \subset \ol{Z}_\lambda$ the semistable locus.
Define
\begin{equation} \label{CX} \cC(X) = \{ \lambda \in \t_+ \ | \ \exists Z_\lambda,
\ \mu(\lambda) = (\lambda, \lambda) \} \end{equation}
using the metric, where $\t_+$ is the closed positive Weyl chamber.
The variety $Y_\lambda$ is the set of points that flow to $Z_\lambda$
under $z^{\lambda}, z \to 0$:
$$ Y_\lambda = \left\{  x \in X  \ | \ \lim_{z \to 0} z^\lambda x \in Z_\lambda \right\} $$
The group $Q_\lambda$ is the parabolic of group elements that have a
limit under $\Ad(z^{\lambda})$ as $z \to 0$:
$$ Q_\lambda = \left\{ g \in G \ | \ \exists \lim_{z \to 0}
\Ad(z^\lambda) g \in G \right\} .$$
Then $Y_\lambda$ is a $Q_\lambda$-variety; and $X_\lambda$ is the
flow-out of $Y_\lambda$ under $G$.  By taking quotients we obtain a
stratification of the quotient stack by locally-closed substacks
$$ X / G = \bigcup_{\lambda \in \cC(X)} X_\lambda / G .$$
This stratification was used in Teleman \cite{te:qu} to give a formula
for the sheaf cohomology of bundles on the quotient stack.

\section{Kontsevich stability}

In this section we recall the definition of Kontsevich's moduli stacks
of stable maps \cite{ko:lo} as generalized to orbifold quotients by
Chen-Ruan \cite{cr:orb} and in the algebraic setting by
Abramovich-Graber-Vistoli \cite{agv:gw}.  Let $X$ be a smooth
projective variety.  Recall that a {\em prestable map} with target $X$
consists of a prestable curve $C \to S$, a morphism $u: C \to X$, and
a collection $z_1,\ldots,z_n : S \to C$ of distinct non-singular
points called {\em markings}.  An automorphism of a prestable map
$(C,u,\ul{z})$ is an automorphism
$$\varphi:C \to C, \quad \varphi \circ u = u, \quad \varphi(z_i) = z_i, \quad i = 1,\ldots, n .$$
A prestable map $(C,u,\ul{z})$ is {\em stable} if the number $\#
\Aut(C,u,\ul{z})$ of automorphisms is finite.  For $d \in H_2(X,\Z)$
we denote by $\ol{\M}_{g,n}(X,d)$ \label{mgn} the moduli stack of
stable maps $(C,u,\ul{z})$ of genus $g = \on{genus}(C)$ and class $d =
v_*[C]$ with $n$ markings.

The notion of stable map generalizes to orbifolds \cite{cr:orb},
\cite{agv:gw} as follows.  These definitions are needed for the
construction of the moduli stack of affine gauged maps in the case
that the git quotient is an orbifold, but not if the quotient is free.
First we recall the notion of twisted curve:

\begin{definition} \label{twistedcurve} {\rm (Twisted curves)} 
Let $S$ be a scheme.  An {\em $n$-marked twisted curve} over $S$ is a
collection of data $(f: \cC \to S, \{ {\mathcal z}_i \subset \cC
\}_{i=1}^n)$ such that
\begin{enumerate} 
\item {\rm (Coarse moduli space)} $\cC$ is a proper stack over $S$
  whose geometric fibers are connected of dimension $1$, and such that
  the coarse moduli space of $\cC$ is a nodal curve $C$ over $S$.
\item {\rm (Markings)} The ${\mathcal z}_i \subset \cC$ are closed
  substacks that are gerbes over $S$, and whose images in $C$ are
  contained in the smooth locus of the morphism $C \to S$.
\item {\rm (Automorphisms only at markings and nodes)} If $\cC^{ns}
  \subset \cC$ denotes the {\em non-special locus} given as the
  complement of the ${\mathcal z}_i$ and the singular locus of $\cC
  \to S$, then $\cC^{ns} \to C$ is an open immersion.
\item {\rm (Local form at smooth points)} If $p \to C$ is a geometric
  point mapping to a smooth point of $C$, then there exists an integer
  $r$, equal to $1$ unless $p$ is in the image of some ${\mathcal
    z}_i$, an \'etale neighborhood $\Spec(R) \to C$ of $p$ and an
  \'etale morphism $\Spec(R) \to \Spec_S(\mO_S[x])$ such that the
  pull-back $\cC \times_C \Spec(R)$ is isomorphic to $ \Spec(R[z]/z^r
  = x )/\mu_r .$
\item {\rm (Local form at nodal points)} If $p \to C$ is a geometric
  point mapping to a node of $C$, then there exists an integer $r$, an
  \'etale neighborhood $\Spec(R) \to C$ of $p$ and an \'etale morphism
  $\Spec(R) \to \Spec_S(\mO_S[x,y]/(xy - t))$ for some $t \in \mO_S$
  such that the pull-back $\cC \times_C \Spec(R)$ is isomorphic to $
  \Spec(R[z,w]/zw - t', z^r - x, w^r - y )/\mu_r $ for some $t' \in
  \mO_S$.
\end{enumerate}
\end{definition} 

Next we recall the notion of twisted stable maps.  Let $\XX$ be a
proper Deligne-Mumford stack with projective coarse moduli space $X$.
Algebraic definitions of twisted curve and twisted stable map to a
$\XX$ are given in Abramovich-Graber-Vistoli \cite{agv:gw},
Abramovich-Olsson-Vistoli \cite{aov:twisted}, and Olsson
\cite{ol:logtwist}.

\begin{definition} 
A {\em twisted stable map} from an $n$-marked twisted curve $(\pi :
\cC \to S, ( {\mathcal z}_i \subset \cC )_{i=1}^n )$ over $S$ to $\XX$
is a representable morphism of $S$-stacks $ u: \cC \to \XX $ such that
the induced morphism on coarse moduli spaces $ u_c: C \to X $ is a
stable map in the sense of Kontsevich from the $n$-pointed curve $(C,
\ul{z} = (z_1,\ldots, z_n ))$ to $X$, where $z_i$ is the image of
${\mathcal z}_i$.  The {\em homology class} of a twisted stable curve
is the homology class $u_* [ \cC_s] \in H_2(X,\Q)$ of any fiber
$\cC_s$.
\end{definition} 

\noindent Twisted stable maps naturally form a $2$-category.  Every $2$-morphism
is unique and invertible if it exists, and so this $2$-category is
naturally equivalent to a $1$-category which forms a stack over
schemes \cite{agv:gw}.

\begin{theorem} (\cite[4.2]{agv:gw}) The stack $\ol{\M}_{g,n}(\XX)$ of
  twisted stable maps from $n$-pointed genus $g$ curves into $\XX$ is
  a Deligne-Mumford stack.  If $\XX$ is proper, then for any $c > 0$
  the union of substacks $\ol{\M}_{g,n}(\XX,d)$ with homology class $d
  \in H_2(\XX,\Q)$ satisfying $(d, c_1(\ti{X}))< c$ is proper.
\end{theorem}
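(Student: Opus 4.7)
The plan is to reduce the properness question to that of ordinary Kontsevich stable maps on the coarse moduli $X$, with the twisted orbifold structure contributing only finite additional data.

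First, I would establish that $\ol{\M}_{g,n}(\XX)$ is a Deligne-Mumford stack by showing the diagonal is representable, unramified, and quasi-compact. Representability follows from standard Hom-scheme arguments applied to morphisms of twisted curves, in the spirit of \cite[1.11]{dm:irr}. Unramifiedness comes from the stability condition: the automorphism group of a twisted stable map is an extension of the (finite) automorphism group of the underlying coarse Kontsevich stable map by a finite group of compatible twistings at the nodes and markings, each of which is a subgroup of some cyclic $\mu_r$. A smooth atlas is then obtained by rigidifying with auxiliary sections on the coarse curve.

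For boundedness of the union of components with $(d, c_1(\ti{X})) < c$, I would consider the forgetful map to the coarse moduli stack $\ol{\M}_{g,n}(X, (u_c)_* d)$ of stable maps to the projective variety $X$. Only finitely many coarse classes $(u_c)_* d$ arise, because an appropriate ample class on $X$ pulls back to a bounded multiple of $c_1(\ti{X})$, so Kontsevich boundedness applies to the target. The fibers of this forgetful map are controlled by two pieces of finite data: the local orders $r$ of the gerbes at nodes and markings, which divide the exponents of the stabilizer groups appearing in $\XX$; and the lifts of a given coarse map $C \to X$ to a representable morphism $\cC \to \XX$, which form a bounded family since $\XX$ is Deligne-Mumford with proper inertia.

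For the valuative criterion, given a DVR $R$ with fraction field $K$ and a twisted stable map over $\Spec K$, the underlying coarse stable map extends uniquely to $\Spec R$ by the properness of Kontsevich's $\ol{\M}_{g,n}(X,d)$. I would then extend the orbifold structure after a tame cyclic base change $\Spec R' \to \Spec R$, apply the local models of Definition \ref{twistedcurve} at each node and marking in the central fiber to produce a twisted curve $\cC_{R'}$, lift the coarse map to a representable morphism $\cC_{R'} \to \XX$, and descend along the Galois cover. Uniqueness of the lift up to unique isomorphism (using representability) yields separatedness, and existence yields properness.

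The main obstacle will be carrying out the extension of the orbifold structure and the representable morphism simultaneously across the central fiber, and arranging that the tame base change can be undone by descent. Concretely, one must check that the local monodromy data of the family around the central fiber matches the orders of the gerbes appearing in the special fiber, and that the representability condition pins down the lift uniquely enough to descend. These technical points are essentially the core content of Abramovich-Vistoli \cite{abramovich:compactifying} and Abramovich-Graber-Vistoli \cite{agv:gw}, whose treatment I would follow.
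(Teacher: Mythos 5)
This statement is not proved in the paper at all: it is quoted verbatim from Abramovich--Graber--Vistoli \cite[Theorem 4.2]{agv:gw} (building on \cite{abramovich:compactifying}) and used as a black box, so there is no internal proof to compare against. Your outline is a fair reconstruction of the strategy of those references: representability and unramifiedness of the diagonal for the Deligne--Mumford property, reduction of boundedness to Kontsevich boundedness on the coarse space $X$ plus finiteness of the twisting data, and the valuative criterion via extending the coarse stable map and then the orbifold structure after a finite base change. Two remarks. First, your bound on the local orders $r$ is correct but for the right reason only because the morphism $\cC \to \XX$ is required to be \emph{representable}: representability forces the stabilizer $\mu_r$ at a node or marking to inject into $\Aut_{\XX}(u(p))$, which is what bounds $r$ by the exponents of the stabilizers of $\XX$; without representability this step fails and the moduli problem is not even bounded. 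Second, as you acknowledge, the genuinely hard content --- matching the monodromy of the family near the special fiber with the local models of Definition \ref{twistedcurve}, constructing the limiting twisted curve, and descending --- is precisely what is deferred to \cite{abramovich:compactifying} and \cite{agv:gw}, so the proposal is a roadmap to the cited proof rather than an independent argument; since the paper itself imports the theorem wholesale, that is an entirely appropriate level of detail here.
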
 

The Gromov-Witten invariants takes values in the cohomology of the
{\em inertia stack}
$$ \cI_\XX := \XX \times_{\XX \times \XX} \XX $$
where both maps are the diagonal.  The objects of $\cI_\XX$ may be
identified with pairs $(x,g)$ where $x \in \XX$ and $g \in
\Aut_\XX(x)$.  For example, if $\XX = X/G$ is a global quotient by a
finite group then
$$ \cI_\XX = \bigcup_{[g] \in G/\Ad(G)} X^g/Z_g $$
where $G/\Ad(G)$ denotes the set of conjugacy classes in $X$ and $Z_g$
is the centralizer of $g$.  Let $\mu_r = \Z/r\Z$ denote the group of
$r$-th roots of unity.  The inertia stack may also be written as a hom
stack \cite[Section 3]{agv:gw}
$$ \cI_\XX = \cup_{r > 0} \cI_{\XX,r}, \quad \cI_{\XX,r} :=
\Hom^{\on{rep}}(B\mu_r, \XX) . $$
The classifying stack $B\mu_r$ is a Deligne-Mumford stack and if $\XX$
is a Deligne-Mumford stack then
$$ \ol{\cI}_\XX := \cup_{r > 0} \ol{\cI}_{\XX,r}, \quad
\ol{\cI}_{\XX,r} := \cI_{\XX/r}/ B\mu_r . $$
is the {\em rigidified inertia stack} of representable morphisms from
$B \mu_r$ to $\XX$, see \cite[Section 3]{agv:gw}.  There is a
canonical quotient cover $\pi: \cI_\XX \to \ol{\cI}_\XX$ which is
$r$-fold over $\ol{\cI}_{\XX,r}$.  Pullback acts on cohomology by an
isomorphism
$$ \pi^* H^*(\ol{\cI}_\XX,\Q) \to H^*(\cI_\XX,\Q) .$$
For the purposes of defining orbifold Gromov-Witten invariants,
$\ol{\cI}_\XX$ can be replaced by $\cI_\XX$ at the cost of additional
factors of $r$ on the $r$-twisted sectors. If $\XX = X/G$ is a global
quotient of a scheme $X$ by a finite group $G$ then
$$ \ol{\cI}_{X/G} = \coprod_{(g)} X^{g}/ (Z_g/ \lan g \ran) $$
where $\lan g \ran \subset Z_g$ is the cyclic subgroup generated by
$g$.
For example, suppose that $X$ is a polarized linearized projective
$G$-variety such that $X \qu G$ is locally free.  Then
$$ \cI_{X \qu G} = \coprod_{(g)} X^{\ss,g}/ Z_g $$
where $X^{\ss,g}$ is the fixed point set of $g \in G$ on $X^{\ss}$,
$Z_g$ is its centralizer, and the union is over all conjugacy classes,
$$ \ol{\cI}_{X \qu G} = 
\coprod_{(g)} X^{\ss,g}/ (Z_g/ \lan g \ran)  $$
where $\lan g \ran$ is the (finite) group generated by $g$.  The
moduli stack of twisted stable maps admits evaluation maps to the
rigidified inertia stack
$$ \ev: \ol{\M}_{g,n}(\XX) \to \ol{\cI}_\XX^n, 
\quad  \ol{\ev}: \ol{\M}_{g,n}(\XX) \to \ol{\cI}_\XX^n, 
 $$
where the second is obtained by composing with the involution
$\ol{\cI}_{\XX} \to \ol{\cI}_{\XX} $ induced by the map $\mu_r \to
\mu_r, \zeta \mapsto \zeta^{-1}$.  

Constructions of Behrend-Fantechi \cite{bf:in} provide the stack of stable maps
with virtual fundamental classes.  The virtual fundamental classes
$$[\ol{\M}_{g,n,\Gamma}(\XX,d)] \in H(\ol{\M}_{g,n}(\XX),\Q) $$
(where the right-hand-side denotes the singular homology of the coarse
  moduli space) satisfy the splitting axioms for morphisms of modular
  graphs similar to those in the case that $X$ is a variety.  Orbifold
  Gromov-Witten invariants are defined by virtual integration of
  pull-back classes using the evaluation maps above.  The orbifold
  Gromov-Witten invariants satisfy properties similar to those for
  usual Gromov-Witten invariants, after replacing rescaling the inner
  product on the cohomology of the inertia stack by the order of the
  stabilizer.  The definition of orbifold Gromov-Witten invariants
  leads to the definition of orbifold quantum cohomology as follows.

\begin{definition} {\rm (Orbifold quantum cohomology)}
  To each component $\XX_k$ of ${\cI}_\XX$ is assigned a rational
  number $\age(\XX_k)$ as follows.  Let $(x,g)$ be an object in
  $\XX_k$.  The element $g$ acts on $T_x \XX$ with eigenvalues
  $\alpha_1, \ldots,\alpha_n$ with $ n = \dim(\XX)$.  Let $r$ be the
  order of $g$ and define $s_j \in \{ 0,\ldots, r - 1 \}$ by $\alpha_j
  = \exp( 2\pi i s_j / r)$.  The {\em age} is defined by $ \age(\XX_k)
  = (1/r) \sum_{j=1}^n s_j .$ Let $ \Lambda_\XX \subset
  \Hom(H_2(\XX,\Q),\Q) $ denote the Novikov field of linear
  combinations of formal symbols $q^d, d \in H_2(\XX,\Q)$ where for
  each $c> 0$, only finitely many $q^d$ with $(d,c_1(\ti{X})) < c$
  have non-zero coefficient.  Denote the quantum cohomology
$$ QH(\XX) = \bigoplus QH^\bullet(\XX), \quad QH^\bullet(\XX) =
\bigoplus_{\XX_k \subset \cI_\XX} H^{\bullet + 2 \age(\XX_k)}(\XX_k)
\otimes \Lambda_\XX .$$
\end{definition}

\noindent The genus zero Gromov-Witten invariants define on $QH(\XX)$
the structure of a Frobenius manifold \cite{cr:orb}, \cite{agv:gw}.

\section{Mundet stability} 

In this section we explain the Ramanathan condition for semistability
of principal bundles \cite{ra:th} and its generalization to maps to
quotients stacks by Mundet \cite{mund:corr}, and the quot-scheme and
stable-map compactification of the moduli stacks.  

\subsection{Ramanathan stability} 

Morphisms from a curve to a quotient of a point by a reductive group
are by definition principal bundles over the curve.  Bundles have a
natural semistability condition introduced half a century ago by
Mumford, Narasimhan-Seshadri, Ramanathan and others in terms of {\em
  parabolic reductions} \cite{ra:th}.  First we explain stability for
vector bundles.  A vector bundle $E \to C$ of degree zero over a
smooth projective curve $C$ is semistable if there are no sub-bundles
of positive degree:
$$ (E \ \text{semistable} ) \quad \text{iff} \quad ( \deg(F) \leq 0, \quad \forall
F \subset E \ \text{sub-bundles}) .$$
A generalization of the notion of semistability to principal bundles
is given by Ramanathan \cite{ra:th} in terms of {\em parabolic
  reductions}.  A parabolic reduction of $P$ consists of a pair
$$Q \subset G, \quad \sigma : C \to P/Q$$
of a parabolic subgroup of $G$, that is and a section $\sigma: C \to
P/Q$.  Denote by $\sigma^* P \subset P$ the pull-back of the
$Q$-bundle $P \to P/Q$, that is, the reduction of structure group of
$P$ to $Q$ corresponding to $\sigma$.  Associated to the homomorphism
$\pi_Q$ of \eqref{piq} is an {\em associated graded} bundle $\Gr(P) :=
\sigma^*P \times_Q L(Q) \to C$ with structure group $L(Q)$.  In the
case that $P$ is the frame bundle of a vector bundle $E \to C$ of rank
$r$, that is,
$$ P = \cup_z P_z, \quad P_z = \{ (e_1,\ldots, e_r) \in E_z^r \ |
\ e_1 \wedge \ldots \wedge e_r \neq 0 \} $$
a parabolic reduction of $P$ is equivalent to a flag of
sub-vector-bundles of $E$
$$ \{0 \} \subset E_{i_1} \subset E_{i_2} \subset \ldots \subset
E_{i_l} \subset E. $$
Explicitly the parabolic reduction $\sigma^* P$ given by frames
adapted to the flag:
$$ \sigma(z) = \{ (e_1,\ldots,e_r) \in E_z^r \ | \ e_j \in E_{i_k,z},
\ \forall j \leq i_k, k = 1,\ldots, l \} .$$
Conversely, given a parabolic reduction the associated vector bundle
has a canonical filtration.

An analog of the degree of a sub-bundle for parabolic reductions is
the degree of a line bundle defined as follows.  Given $\lambda \in
\g_\Z - \{ 0 \}$ we obtain from the identification $\g \to \g^\dual$ a
rational weight $\lambda^\vee$.  Denote the corresponding characters
$ \chi_\lambda: L(Q) \to \C^\times$ and $ \chi_\lambda\circ \pi_Q: Q
\to \C^\times .$
Consider the associated line bundle over $C$ defined by 
$P(\C_{\lambda^\vee}) := \sigma^* P \times_Q \C_{\lambda^\vee} .$
The {\em Ramanathan weight} \cite{ra:th} is the degree of the line
bundle $P(\C_{\lambda^\vee}) $, that is,
%
$$ \mu_{BG}(\sigma,\lambda) := ([C], (c_1(P(\C_{\lambda^\vee})) \in \Z
.$$
The bundle $P \to C$ is {\em Ramanathan semistable} if
$$ \mu_{BG}(\sigma,\lambda) \leq 0 , \quad \forall (\sigma,\lambda) .$$
As in the case of vector bundle, it suffices to check semistability
for all reductions to {\em maximal parabolic} subgroups.  In fact, any
dominant weight may be used in the definition of
$\mu_{BG}(\sigma,\lambda)$, which shows that Ramanathan semistability
is independent of the choice of invariant inner product on the Lie
algebra and one obtains the definition given in Ramanathan
\cite{ra:th}.

\subsection{Mundet semistability} 

The Mundet semistability condition generalizes Ramanathan's condition
to morphisms from a curve to the quotient stack \cite{mund:corr},
\cite{schmitt:univ}.  Let
$$(p: P \to C, u: C \to P(X)) \in \on{Obj}(\Hom(C,X/G)) $$
be a gauged map.  Let $(\sigma,\lambda)$ consist of a parabolic
reduction $\sigma: C \to P/Q$ and a positive coweight $\lambda \in
\z_+(Q)$.  Consider the family of bundles $ P^\lambda \to S :=
\C^\times $ obtained by conjugating by $z^\lambda$.  That is, if $P$
is given as a cocycle in nonabelian cohomology with respect to a
covering $\{ U_i \to X \}$
$$ [P] = [\psi_{ij} :
(U_i \cap U_j) \to G] \ \in \ H^1(C,G) $$
then the twisted bundle is given by 
$$ [P^\lambda] = [ z^{\lambda} \psi_{ij}
z^{-\lambda}: (U_i \cap U_j) \to G ] \ \in \  H^1(C \times S,G) .$$
Define a family of sections
$$ u^\lambda: S \times C \to P^\lambda(X) $$
by multiplying $u$ by $z^\lambda, z \in \C^\times$.  This family has
an extension over $s = \infty$ called the {\em associated graded}
bundle and stable section
\begin{equation} \label{assocgrad} \Gr(P) \to C, \quad \Gr(u): \hat{C} \to \Gr(P)(X) \end{equation} 
whose bundle $\Gr(P)$ agrees with the definition of associated graded
above.  Note that the associated graded section $\Gr(u)$ exists by
compactness of the moduli space of stable maps to $\Gr(P)(X)$.  The
composition of $\Gr(u)$ with projection $\Gr(P)(X)\to C$ is a map of
degree one; hence there is a unique component $\hat{C}_0$ of $\hat{C}$
that maps isomorphically onto $C$.  The construction above is
$\C^\times$-equivariant and in particular over the central fiber $z =
0$ the group element $z^\lambda$ acts by an automorphism of $\Gr(P)$
fixing $\Gr(u)$ up to automorphism of the domain.

For each pair of a parabolic reduction and one-parameter subgroup as
above, the Mundet weight is a sum of {\em Ramanathan} and {\em
  Hilbert-Mumford} weights.  To define the Mundet weight, consider the
action of the automorphism $z^\lambda$ on the associated graded
$\Gr(P)$.  The automorphism of $X$ by $z^\lambda$ is $L(Q)$-invariant
and so defines an automorphism of the associated line bundle $\Gr(u)^*
P(\ti{X}) \to \Gr(C)$.  The weight of the action of $z^\lambda$ on the
fiber of $\Gr(u)^* P(\ti{X})$ over the root component $\hat{C}_0$
is the {\em Hilbert-Mumford weight}
$$ \mu_X (\sigma,\lambda) \in \Z, \quad z^\lambda \ti{x} =
z^{\mu_X(\sigma,\lambda)} \ti{x}, \quad \forall \ti{x} \in
(\Gr(u)|_{\hat{C}_0})^* \Gr(P) \times_G \ti{X} .$$

\begin{definition}  {\rm (Mundet stability)}  
Let $(P,u)$ be a gauged map from a smooth projective curve $C$ to the
quotient stack $X/G$.  The {\em Mundet weight} of a parabolic
reduction $\sigma$ and dominant coweight $\lambda$ is
$$ \mu(\sigma,\lambda) = \mu_{BG}(\sigma,\lambda) +
\mu_X(\sigma,\lambda) \in \Z .$$
The gauged map $(P,u)$ is Mundet {\em semistable} resp. {\em stable}
if and only if
$$ \mu(\sigma,\lambda) \leq 0, \ \text {resp.} \ < 0, \quad \forall
    (\sigma,\lambda) .$$
A pair $(\sigma,\lambda)$ such that $ \mu(\sigma,\lambda ) \ge 0 $ is
a {\em destabilizing pair}.  A pair $(P,u)$ is {\em polystable} iff
\begin{equation} \label{polystable}  \mu(\sigma,\lambda) = 0 \iff \mu(\sigma,-\lambda) = 0, \quad \forall
(\sigma,\lambda) .\end{equation}
That is, a pair $(P,u)$ is polystable if for any destabilizing pair
the opposite pair is also destabilizing. 
\end{definition}

More conceptually the semistability condition above is the
Hilbert-Mumford stability condition adapted to one-parameter subgroups
of the complexified gauge group, as explained in \cite{mund:corr}.
Semistability is independent of the choice of invariant inner product
as follows for example from the presentation of the semistable locus
in Schmitt \cite[Section 2.3]{schmitt:git}.

We introduce notation for various moduli stacks.  Let $\M^G(C,X)$
denote the moduli space of Mundet semistable pairs; in general,
$\M^G(C,X)$ is an Artin stack as follows from the git construction
given in Schmitt \cite{schmitt:univ,schmitt:git} or the more general
construction of hom stacks in Lieblich \cite[2.3.4]{lieblich:rem}.
For any $d \in H_2^G(X,\Z)$, denote by $\M^G(C,X,d)$ the moduli stack
of pairs $v = (P,u)$ with
$$v_* [C] := (\phi \times_G \on{id}_X)_* u_* [C] = d \in
H_2^G(X,\Z) $$
where $\phi:P \to EG$ is the classifying map.  

\subsection{Compactification} 

Schmitt \cite{schmitt:univ} constructs a Grothendieck-style
compactification \label{quots} of the moduli space of
Mundet-semistable obtained as follows.  Suppose $X$ is projectively
embedded in a projectivization of a representation $V$, that is $ X
\subset \P(V)$.  Any section $u: C \to P(X)$ gives rise to a line
sub-bundle
$ L := u^* (\mO_{\P(V)} (-1) \to \P(V))$
of the associated vector bundle $P \times_G V$.  From the inclusion
$\iota:L \to P(V)$ we obtain by dualizing a surjective map
$$ j: P(V^\dual) := P \times_G V^\dual \to L^\dual .$$
A {\em bundle with generalized map} in the sense of Schmitt
\cite{schmitt:git} is a pair $(P,j)$ as above where $j$ is allowed to
have base points in the sense that
$$\zeta \in C \ \text{basepoint} \ \iff ( (\rank(j_\zeta):
P(V)_\zeta^\dual \to L_\zeta^\dual) = 0) .$$
Schmitt \cite{schmitt:git} shows that the Mundet semistability
condition extends naturally to the moduli stack of bundles with
generalized map.  Furthermore, the compactified moduli space
$\ol{\M}^{\quot,G}(C,X,d)$ is projective, in particular proper.

Schmitt's construction of the moduli space of bundles with generalized
maps uses geometric invariant theory.  After twisting by a
sufficiently positive bundle we may assume that $P(V^\dual)$ is
generated by global sections.  A collection of sections $s_1,\ldots,
s_l$ generating $P(V^\dual)$ is called an {\em $l$-level structure}.
Equivalently, an $l$-level structure is a surjective morphism $ q:
\mO_C^{\oplus l} \to P(V^\dual) .$ Denote by
$\M^{G,\lev}(C,\P(V))$ \label{level} the stack of gauged maps to
$\P(V)$ with level structure.  The group $GL(l)$ acts on the stack of
$l$-level structures, with quotient
\begin{equation} \label{schmittgit} \M^{G,\lev}(C,\P(V)) / GL(l) = \M^G(C,\P(V)) .\end{equation}
Denote by $\M^{G,\lev}(C,X) \subset \M^{G,\lev}(C,\P(V))$ the substack
whose sections $u: C \to \P(V)$ have image in $P(X) \subset P(\P(V))$.
Then by restriction we obtain a quotient presentation
$$ \M^{G,\lev}(C,X) / GL(l) = \M^G(C,X) .$$
Allowing the associated quotient $P \times_G V^\dual \to P \times_G
L^\dual$ to develop base points gives a compactified moduli stack of
gauged maps with level structure $\ol{\M}^{G,\quot,\lev}(C,X)$.
Schmitt \cite{schmitt:univ, schmitt:git} shows that the stack
$\ol{\M}^{G,\quot,\lev}(C,X)$ has a canonical linearization and the
git quotient $\ol{\M}^{G,\quot,\lev}(C,X) \qu GL(l)$ defines a
compactification $\ol{\M}^{G,\quot}(C,X)$ of $\M^G(C,X)$ independent
of the choice of $l$ as long as $l$ is sufficiently large.  A version
of the quot-scheme compactification with markings is obtained by
adding tuples of points to the data.  That is,
$$ \ol{\M}^{G,\quot}_n(C,X) := \ol{\M}^{G,\quot}(C,X) \times
\ol{\M}_n(C) $$
where we recall that $\ol{\M}_n(C)$ is the moduli stack of stable maps
$p: \hat{C} \to C$ of class $[C]$ with $n$ markings and genus that of
$C$.  The orbit-equivalence relation in can be described more
naturally in terms of {\em $S$-equivalence}: Given a family
$(P_S,u_S)$ of semistable gauged maps over a scheme $S$, such that the
generic fiber is isomorphic to some fixed $(P,u)$, then we declare
$(P,u)$ to be $S$-equivalent to $(P_s,u_s)$ for any $s \in S$.  Any
equivalence class of semistable gauged maps has a unique
representative that is polystable, by the git construction in Schmitt
\cite[Remark 2.3.5.18]{schmitt:univ}.  From the construction
evaluation at the markings defines maps to the quotient stack
$$ \ol{\M}_n^{G,\quot}(C,X,d) \to (V/\C^\times)^n, \quad ((p \circ
z_i)^*L, j \circ p \circ z_i) $$
rather than to the git quotient $X^n \subset \P(V)^n$.\footnote{The
  Ciocan-Fontanine-Kim-Maulik \cite{cf:st} moduli space of {\em stable
    quotients} remedies this defect by imposing a stability condition
  at the marked points $z_1,\ldots, z_n \in C$.  The moduli stack then
  admits a morphism to $\ol{\cI}_{X \qu G}^n$ by evaluation at the
  markings.}

\begin{example}\label{toric example} {\rm (Mundet semistable maps in the toric case)}
If $G$ is a torus and $X = \P(V)$ then we can given an explicit description of Schmitt's quot-scheme compactification $\ol{\M}^{G,\quot}(C,X,d) $ of Mundet semistable maps \cite{schmitt:univ}.

Specifically let $X = \P(V)$ where $V$ is a $k$-dimensional vector space and
\begin{equation}  \label{decompose}  V = \bigoplus_{i=1}^k V_i \end{equation} 
is the decomposition of $V$ into weight spaces $V_i$ with weight
$\mu_i \in \g_\Z^\dual$.  

A point of $\M^{G}(C,X,d)$ is a pair $(P,u)$:
\[
P\to C \ \ \ \ \ \  u\colon C \to P(X),
\]
where $P$ is a $G$-bundle and $u$ is a section. We consider $u$ as a morphism $\widetilde{u} \colon L\to P(V)$ with $L\to C$ a line bundle. Via the decomposition of $V$, we can write $\widetilde{u}$ as a $k$-tuple:
\[
(\widetilde{u}_1, \dotsc, \widetilde{u}_k) \in \bigoplus_{i=1}^k H^0(P(V_i)\otimes L^\dual).
\]
The compactification $\ol{\M}^{G,\quot}(C,X,d)$ is obtained by allowing the $\widetilde{u}_i$ to have simultaneous zeros:
\[
\widetilde{u}_1^{-1}(0) \cap \dotsb \cap \widetilde{u}_k^{-1}(0) \neq \emptyset
\]
We make use of this example later one so we collect a few results about $\ol{\M}^{G,\quot}(C,X,d)$ below.

Recall \eqref{eqiv c_1} there is a projection $H^2_G(X)\to H^2(B G)=\g_\Z^\dual $ and similarly we have $H_2^G(X)\to H_2(B G) = \g_\Z$.  Associated to $v=(P,u)$ is the discrete data:
\begin{itemize}
\item[] $v_*[C]=d \in H_2^G(X,\Z)$ and its image $d(P) \in H_2(BG)$
\item[] $c_1^G(\widetilde{X}) \in H^2_G(X)$ and its image $\theta \in H^2(B G)$
\item[] $d(u):= -c_1(L) \in H^2(C,\Z) \cong \Z$.
\end{itemize}
Note $d(P)$ is the degree of $P$; that is, $d(P) = c_1(P) \in H^2(C,\g_\Z)\cong \g_\Z$. We can now state the following.

\begin{lemma}\label{torus action on  P(V)} Let $G$ be a
  torus acting on a vector space $V$. Let $V = \bigoplus_{i=1}^kV_i$
  be its decomposition into weight spaces with weights $\mu_1, \dotsc,
  \mu_k$.  
\begin{enumerate}
\item \label{one}
The Mundet semistable locus consists of pairs $(P,u)$ such
  that
\begin{equation} \label{oneeq} \on{hull} ( \{
		- d(P)^\dual + \mu_i | \ti{u}_i \neq 0 \}) \ni
		\theta. \end{equation} 
\item \label{two} let $W = \bigoplus_{i=1}^k H^0(P(V_i)\otimes L^\dual)$ and
let $W^{ss}$ consist of $(\widetilde{u}_1, \dotsc, \widetilde{u}_k)$ such that
\eqref{oneeq} holds. Then
 $ \ol{\M}^{G,\quot}(C,X,d) \cong W^{ss}/G. $
  \item \label{three} If $\widetilde{u}_i \neq 0$ then $
    (\mu_i,d(P)^\vee)+d(u) \geq 0$. If moreover \eqref{oneeq} holds
    then
$$ (\theta-d(P)^\dual,d(P)) + d(u) \geq 0. $$
  \item \label{four} $(v_*[C],c_1(P(\widetilde{X})) =
    (\theta,d(P))+d(u)$.  
\end{enumerate}
\end{lemma}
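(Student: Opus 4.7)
The plan is to prove the four claims in the order fourth, third, first, second: the fourth is a direct topological pairing, the first inequality of the third is elementary Chern-class nonnegativity, the first claim is the main Mundet-weight calculation, the second inequality of the third then follows from the first claim by a convex combination, and the second claim is deduced from the first via Schmitt's GIT presentation \eqref{schmittgit}.

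For the fourth claim, I will use the splittings $H^2_G(X) \cong \Z \oplus \g_\Z^\vee$ (under which $c_1^G(\ti{X}) \mapsto (1,\theta)$ by \eqref{eqiv c_1}) and dually $H_2^G(X) \to \Z \oplus \g_\Z$ (under which $v_*[C] \mapsto (d(u), d(P))$ from the definitions of $d(u)$ and $d(P)$); pairing off the components gives $(v_*[C], c_1^G(\ti{X})) = d(u) + \theta(d(P))$. For the first inequality of the third claim, the line bundle $P(V_i) \otimes L^\vee$ has first Chern class $\mu_i(d(P)) + d(u) = (\mu_i, d(P)^\vee) + d(u)$, so admitting a nonzero global section forces this degree to be nonnegative.

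The main step is the first claim. Since $G$ is a torus, every parabolic equals $G$, so parabolic reductions $\sigma$ are trivial and the Mundet weight reduces to $\mu(\lambda) = \mu_{BG}(\lambda) + \mu_X(\lambda)$ for $\lambda \in \g_\Z$ ranging freely. The Ramanathan summand equals the degree of the associated line bundle $P(\C_{\lambda^\vee})$, namely $\lambda^\vee(d(P))$. For the Hilbert-Mumford summand, the one-parameter family $z^\lambda \cdot \ti{u} = (z^{\mu_i(\lambda)} \ti{u}_i)_i$ has associated-graded limit concentrated on the weight summands realizing the extremal value of $\mu_i(\lambda)$ among indices with $\ti{u}_i \not\equiv 0$; the weight of $z^\lambda$ on the fiber of $\ti{X} = \mO(1) \otimes \C_\theta$ at the resulting image point of $\P(V)$ is then read off exactly as in the torus-on-$\P(V)$ computation of the earlier lemma, yielding $\mu_X(\lambda) = \min\{-\mu_i(\lambda) : \ti{u}_i \not\equiv 0\} + \theta(\lambda)$. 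The Mundet condition $\mu(\lambda) \leq 0$ for every $\lambda \in \g_\Z$ will then be equivalent, by convex duality, to the hull condition \eqref{oneeq}.

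For the second inequality of the third claim I will invoke the first claim to write $\theta = \sum_i t_i(-d(P)^\vee + \mu_i)$ with $t_i \geq 0$, $\sum_i t_i = 1$, the sum taken over $i$ with $\ti{u}_i \not\equiv 0$, pair with $d(P)$, and combine with the first inequality of the third claim weighted by the $t_i$ to obtain the stated bound. For the second claim, after fixing the topological data prescribed by $d$, Schmitt's GIT presentation \eqref{schmittgit} identifies the compactified moduli of bundles with generalized map with the quotient $W \qu G$, where $W = \bigoplus_i H^0(P(V_i) \otimes L^\vee)$ carries the residual linear $G$-action through the characters $\chi_{\mu_i}$, shifted by $\theta$; applying the ordinary Hilbert-Mumford criterion to this linear action on $W$ recovers the same hull condition as in the first claim, so the GIT semistable locus is precisely $W^{ss}$ and the quotient is $\ol{\M}^{G,\quot}(C,X,d)$.

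The delicate step will be the Hilbert-Mumford identification of $\mu_X(\lambda)$: one must verify carefully that the limiting associated-graded section $\Gr(u)$ indeed lands in the weight summand realizing the extremum of $\mu_i(\lambda)$ along the root component of $\hat{C}$, so that the weight of the linearization $\ti{X}$ on the corresponding fiber can be extracted exactly as in the point-wise formula from the earlier lemma.
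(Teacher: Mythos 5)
Your proposal is correct and follows essentially the same route as the paper: the torus assumption trivializes the parabolic data, the Mundet weight becomes $(d(P)^\vee,\lambda)+\min_i\{-\mu_i(\lambda):\ti{u}_i\neq 0\}+\theta(\lambda)$, and convex duality gives the hull condition, with part (3) obtained by the same convex-combination pairing and part (4) by the same degree computation (the paper phrases it via a homotopy of $\ti{u}$ to the zero section, which is exactly what underlies the splitting $v_*[C]\mapsto(d(u),d(P))$ you use). The only divergence is that you supply more detail for part (2), which the paper dismisses as immediate, and the "delicate step" you flag (that $\Gr(u)$ concentrates on the extremal weight summands along the root component) is handled in the paper by the same observation.
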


\begin{proof}   Since $G$ is abelian, $\Gr(P) = P$ for any
  pair $(\lambda,\sigma)$.  It follows that for any $\lambda \in
  \g_\Q$, the Mundet weight is
$$ \mu(\sigma,\lambda) := \{ \min_i (d(P)^\vee,\lambda) -
  \mu_i(\lambda) + \theta(\lambda), \ti{u}_i \neq 0\} .$$
Hence the semistable locus is the space of pairs $(P,u)$
where
$$ \on{hull} ( \{ - d(P)^\dual + \mu_i | \ti{u}_i \neq 0 \})
\ni \theta .$$
This proves \eqref{one}.  The description \eqref{two} follows immediately.
For \eqref{three}, if
$\widetilde{u}_i \neq 0$ then $\deg \on{div}(\widetilde{u}_i) \geq
0$. But we also have
\begin{equation}\label{threeeq}
  (\mu_i,d(P)^\dual)+d(u) = c_1(P(V_i)\otimes L^\vee) =\deg \on{div}(\widetilde{u}_i)  \geq 0 .
\end{equation}

In particular $- d(P)^\dual + \theta \in \on{hull} ( \{ \mu_i |
\ti{u}_i \neq 0 \})$.  Together with \eqref{threeeq} this shows
$(\theta+d(P)^\dual,d(P)) \geq 0$.  

To prove \eqref{four} we use that the sections $\widetilde{u}_i$ above
are homotopic to the zero section
$$ \ti{u}_0: C \to P(V)\otimes L^\dual, \quad z \mapsto (z,0) $$
and $\ti{X}$ is induced from an equivariant line bundle on $V$ with
character $\theta$ at the fixed point at zero. Therefore  we have
\begin{equation} \label{dform}
  (v_*[C],c_1(P(\ti{X}))) = (u_{0,*}[C], c_1(P(\C_\theta)\otimes L^\dual)) =
  (\theta,d(P)) + d(u) .\end{equation}
\end{proof}

For an explicit example, if $G = \C^\times $ and $V =
\C^k$ then
$$\deg(  P(V_i)\otimes L^\dual) =  \deg(P(V_i)) - \deg(L) = d(P) + d(u),
\quad i = 1,\ldots, k .$$
It follows that the moduli stack admits an isomorphism
$$ \ol{\M}^{G,\quot}(C,X,d) \cong \C^{k(d(P) + d(u) + 1),\times} /
\C^\times \cong \P^{k(d(P) + d(u) + 1) - 1}.$$
This moduli stack is substantially simpler in topology than the moduli space of stable maps to $C × X/G$, despite the dramatically more complicated stability condition. This ends the example.
\end{example}

A {\em Kontsevich-style compactification} of the stack of
Mundet-semistable gauged maps which admits evaluation maps as well as
a Behrend-Fantechi virtual fundamental class \cite{cross} is defined
as follows.  The objects in this compactification allow {\em stable
  sections}, that is, stable maps $u : \hat{C} \to P(X) $ whose
composition with $P(X) \to C$ has class $[C]$.  Thus objects of
$\ol{\M}^G_n(C,X)$ \label{mss} are triples $(P, \hat{C}, u,\ul{z})$
consisting of a $G$-bundle $P \to C$, a projective nodal curve
$(\hat{C},\ul{z})$, and a stable map $u: \hat{C} \to P \times_G X$
whose class projects to $[C] \in H_2(C,\Z)$.  Morphisms are the
obvious diagrams.  To see that this category forms an Artin stack,
note that the moduli stack of bundles $\Hom(C,BG)$ has a universal
bundle
$$U \to C \times \Hom(C,BG) .$$ 
Consider the associated $X$-bundle
$$U \times_G X \to C \times \Hom(C,BG)  .$$  
The stack $\ol{\M}_n^G(C,X)$ is a substack of the stack of stable maps to $U
\times_G X$, and is an Artin stack by e.g. Lieblich
\cite[2.3.4]{lieblich:rem}, see \cite{qk2} for more details.  Note
that hom-stacks are not in general algebraic \cite{bhatt}.

Properness of the Kontsevich-style compactification follows from a
combination of Schmitt's construction and the Givental map.  A proper
relative Givental map is described in Popa-Roth \cite{po:stable}, and
in this case gives a morphism
\begin{equation} \label{givmor} \ol{\M}^G(C,X,d)
  \to\ol{\M}^{G,\quot}(C,X,d).\end{equation}
For each fixed bundle this map collapses bubbles of the section $u$
and replaces them with base points with multiplicity given by the
degree of the bubble tree.  Since the Givental morphism
\eqref{givmor}, the forgetful morphism
$\ol{\M}^G_n(C,X,d) \to \ol{\M}^G(C,X,d)$ and the quot-scheme
compactification $\ol{\M}_n^{G,\quot}(C,X,d)$ are all proper, so is
$\ol{\M}^G_n(C,X,d)$.


\subsection{Energy positivity} 

A natural notion of {\em energy} of a gauged map is defined as
follows.  For a gauged map $v = (P,u)$ the energy is given by the
pairing with the equivariant first Chern class of the linearization
$$ \cE(v): = (d, c_1(P(\ti{X}))) \in \Z,\quad d = v_* [C] \in
H_2^G(X,\Z) .$$
From Mundet's correspondence \cite{mund:corr} it is immediate that the
energy is non-negative, since in the symplectic definition the energy
is defined as an integral of a non-negative function (the {\em energy
  density}) over the domain curve.  Here we give an algebraic proof:

\begin{lemma}  \label{positivity} For any Mundet-semistable gauged map $v = (P,u)$ from a smooth projective genus zero curve $C$ with class  $d = u_* [C] \in H_2^G(X,\Z)$, the pairing $\cE(v) = (d,
  c_1(P(\ti{X}))) \in \Z$ is non-negative.  The energy $\cE(v)$
  vanishes only if the bundle $P$ is trivializable and $u$ constant in
  some trivialization of $P(X)$ induced by a trivialization of $P$.
\end{lemma}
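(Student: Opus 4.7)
The strategy is to split into cases according to whether the principal bundle $P$ is topologically trivial.  Since $C$ has genus zero, $C \cong \P^1$ and Grothendieck's theorem furnishes a reduction of $P$ to a maximal torus $T \subset G$, classified by a dominant cocharacter $\lambda \in \t_+ \cap \g_\Z$, with $\lambda = 0$ if and only if $P$ is trivializable.  In the easy case $\lambda = 0$, any trivialization of $P$ turns $u$ into a morphism $C \to X$ of class $u_*[C]$, and ampleness of $\ti X$ yields $\cE(v) = (u_*[C], c_1(\ti X)) \ge 0$, with equality exactly when $u$ is constant; this settles the equality statement of the lemma.

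For $\lambda \ne 0$ the plan is to reduce to the toric computation of Example \ref{toric example} and prove the sharper bound $\cE(v) \ge \|\lambda\|^2 > 0$, so that vanishing of the energy is impossible.  Fix a $G$-equivariant projective embedding $X \hookrightarrow \P(V)$ so that $\ti X$ is the restriction of $\mO_{\P(V)}(1) \otimes \C_\theta$ for some $\theta \in \g_\Z^\dual$, and decompose $V = \bigoplus_i V_i$ into $T$-weight spaces with weights $\mu_i \in \t^\dual$.  Since $P(X) = P_T(X)$, the same section $u$ exhibits $(P_T, u)$ as a toric gauged map of the form considered in Example \ref{toric example}, with components $\tilde u_i \in H^0(P_T(V_i) \otimes L^\dual)$ and Grothendieck degree $d(P_T) = \lambda$.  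Apply the $G$-Mundet inequality to the canonical parabolic reduction $\sigma_\lambda$ of $P$ to $Q_\lambda$ (arising from $T \subset L(Q_\lambda)$) with the central dominant coweight $\lambda \in \z_+(Q_\lambda)$: the Ramanathan term is $\mu_{BG}(\sigma_\lambda, \lambda) = \deg P_T(\C_{\lambda^\vee}) = \|\lambda\|^2$, while the identifications $\Gr(P) \cong P_T \times_T L(Q_\lambda)$ and $\Gr(u) = \lim_{z \to 0} z^\lambda u$ show that $\mu_X(\sigma_\lambda, \lambda)$ coincides with the toric Hilbert--Mumford weight of $(P_T, u)$ at the coweight $\lambda$; hence the $G$-Mundet inequality reduces to the toric semistability condition of Lemma \ref{torus action on  P(V)}\eqref{one}.

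With this identification the convex-combination argument in the proof of Lemma \ref{torus action on  P(V)}\eqref{three} applies: writing the shifted character as a convex combination $\sum_i a_i \mu_i$ supported on the non-zero components $\tilde u_i$, and combining the positivity $\mu_i(\lambda) + d(u) = \deg(P_T(V_i) \otimes L^\dual) \ge 0$ with the formula $\cE(v) = \theta(\lambda) + d(u)$ from Lemma \ref{torus action on  P(V)}\eqref{four}, one obtains
\[
  \cE(v) - \|\lambda\|^2 \;=\; \sum_i a_i\,(\mu_i(\lambda) + d(u)) \;\ge\; 0.
\]
Thus $\cE(v) \ge \|\lambda\|^2 > 0$ whenever $\lambda \ne 0$, so $\cE(v) = 0$ forces $\lambda = 0$ and the trivial-bundle case applies.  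The main technical obstacle is the identification of $\mu(\sigma_\lambda, \lambda)$ with the $T$-Mundet weight of $(P_T, u)$ at $\lambda$: this requires unpacking the associated-graded construction for the parabolic $Q_\lambda$ and checking that, for reductions arising from a torus reduction of $P$, the non-abelian and abelian associated gradeds produce the same limit section on the root component and the same weight on its linearization.
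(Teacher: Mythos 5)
Your argument is essentially the paper's own: the paper gives two proofs of Lemma \ref{positivity}, and yours blends them --- the reduction to the abelian case together with the inequalities $\deg(P(V_i)\otimes L^\dual)=\mu_i(d(P))+d(u)\ge 0$ and the energy formula $\cE(v)=\theta(d(P))+d(u)$ are exactly the content of Lemma \ref{torus action on  P(V)} as used in the first proof, while testing Mundet semistability against the single one-parameter subgroup determined by the degree of the bundle is exactly the second proof (which takes $\lambda=-d(P)^\vee$), and both routes yield the same sharpened bound $\cE(v)\ge\Vert d(P)\Vert^2$ that forces $d(P)=0$ and then a constant section in the vanishing case. The only genuine variations are that you reduce $P$ itself to $T$ by Grothendieck's theorem (legitimate since $C$ has genus zero) where the paper passes to the associated graded of a Borel reduction via Drinfeld--Simpson, and that the identification of the nonabelian weight $\mu(\sigma_\lambda,\lambda)$ with the toric one --- which you correctly single out as the point needing care and then defer --- is treated just as briskly in the paper itself.
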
 

\begin{proof}   We give two proofs.  By a special case of the
Drinfeld-Simpson theorem \cite{ds:red}, \cite[Lemma 3.2.7]{cf:st}, $P$
admits a reduction to a Borel subgroup $B \subset G$.  Let $\pi_B: B
\to T$ be the projection \eqref{piq}, and $\Gr(P)$ the associated
graded.  Since the map $\pi_Q$ is $T$-equivariant, the section $u$
induces a section $\Gr(u): \cC \to \Gr(P)(X)$ that is also
$T$-semistable.  Therefore it suffices to consider the case $G = T$.
Let $k = \dim(V)$ and $ V = \bigoplus_{i=1}^k V_i $ the decomposition
of $V$ into weight spaces $V_i$ with weight $\mu_i$.  

We use the notation introduced in example \ref{toric example}. In particular the first Chern class $c_1^G(\ti{X})$ becomes identified, up to positive scalar multiple with a pair 
$$c_1^G(\ti{X}) \mapsto (1,\theta) \in \Z \oplus \g_\Z^\dual .$$
The Mundet semistability criterion for one-parameter subgroups of $\Aut(P) \cong T$
has Mundet weights equal to 
$$ \mu_{BG}(\sigma,\lambda) = (d(P)^\vee,\lambda), \quad
\mu_M(\sigma,\lambda) = (d(P)^\vee,\lambda) + \mu_X(\sigma,\lambda)
.$$

By lemma \ref{torus action on  P(V)} \eqref{three} we have
\[
 (\theta-d(P)^\dual,d(P)) + d(u) \geq 0 .
\]
This implies
%
\begin{eqnarray}\label{dPP} (v_*[C],c_1(P(\ti{X}))) &=&  (\theta,d(P)) + d(u) \\
&=& (\theta - d(P)^\vee, d(P)) + d(u) +  (d(P)^\vee, d(P)) \\ &\ge&
  (\theta - d(P)^\vee, d(P)) + d(u) \ge 0
\end{eqnarray} 
as claimed.  If $(d(P),\d(P)^\vee)$ is zero then we must have $d(P) =
0$, hence $P$ is trivializable.  Hence
$$P(\ti{X}) = C \times \ti{X}, \quad (\pi \circ u)_* [C] = 0 \in
H_2(X)$$ 
where $\pi: P(X) \cong C \times X \to X$ is the projection on the
second factor.  This implies that $u$ is constant.

In the second proof we evaluate the Mundet weight for a carefully
chosen one-parameter subgroup. As before assume $G =
T=(\mathbb{C}^\times)^r$ and $X = \mathbb{P}(V)$. Consider $v \colon C
\to X/G$ as a pair $(P\to C, P\xrightarrow{\alpha} \P(V))$ with
$\alpha$ a $T$-equivariant map.  The energy of $v$ is the pairing
$$ (v_*[C], c_1^G(\tilde{X})), \quad \text{where} \ v_*[C] \in
\Z\oplus \mathfrak{t}_\Z, \quad c_1^G(\tilde{X}) \in \Z \oplus
\mathfrak{t}_\Z^\vee .$$
The latter is up to positive scalar equal to $(1, \theta)$; the former
is equal to $(\lambda, d(P))$ for an appropriate integer $\lambda$.
The energy is equivalently
$$\cE(v)= v^*c_1^G(\tilde{X}) \in H^2(C) = \Z .$$
Hence $\lambda = \deg(\alpha)$.  The following is readily verified. If
$d(P) = (d_1, \dotsc, d_r)$ then $P$ is the frame bundle of the vector
bundle $E$ defined by
$$E:=\oplus_{i  = 1}^r \mO_C(d_i) .$$ 
The map $\alpha$ is given by global sections
$u_0, \dotsc, u_m \in H^0(\alpha^*\mO_{\P(V)}(1))$
which are weight vectors for $G$.  Consider the weight space
decomposition $V = \oplus_i V_i$ where $G$ acts on $V_i$ with weight
$\mu_i$.  Equivariance implies that $-\mu_i$ is the weight of $u_i$.
We claim $\deg(\alpha)\geq (-\mu_i,d(P))$. To see this let
$$ |E| = \underline{\Spec}(\on{\Sym}^*(E^\vee))$$
be the total space of $E$.  Via the clutching construction $T$ is
given by gluing of trivializations in coordinate charts near
$[1,0],[0,1] \in \P^1$,
$$ |E| = \Spec \C[z,x_1, \dotsc, x_r] \cup \Spec
\C[z^{-1},y_1, \dotsc, y_r]$$
with $y_i = z^{-d_i}x_i$.  The space of global sections
$H^0(\alpha^*\mO_{\P(V)}(1))$ has a basis of pairs
$(z^j\prod_ix_i^{n_i},z^{-k}\prod_i y_i^{n_i})$ that transform as
follows
\[
z^j \prod_i x_i^{n_i} = z^j \prod_i z^{n_i d_i} y_i^{n_i} = z^{j+(-\mu_i,d(P)) - \deg(\alpha)) }\prod_i y_i^{n_i}
\]
That is 
$-k = j+(-\mu_i,d(P)) - \deg(\alpha) \leq 0. $
As $j\geq 0$ we conclude 
$(-\mu_i,d(P))\leq \deg(\alpha) .$
Mundet stability for the one-parameter subgroup generated by $\lambda$
is defined by a limiting equivariant map $\alpha^0 \colon P\to \P(V)$
given by sections $u_i^0$ whose image is fixed by $z^\lambda$. The
stability condition is
\[
\min_{i, u_i^0\neq 0} (d(P)^\vee,\lambda)+(\theta-\mu_i, \lambda)\leq 0.
\]
Substitute in $\lambda = - d(P)^\vee$ and multiply by $-1$ to obtain
$$(d(P),d(P)^\vee) + (\theta,d(P)^\vee)+(-\mu_i,d(P)^\vee)\geq 0 .$$
Therefore
$$ \cE(v)= (\theta,d(P)^\vee)+\deg(\alpha)\geq
(\theta,d(P)^\vee)+(-\mu_i,d(P)^\vee)\geq 0 .$$
For equality to hold we need $d(P) = 0$ and $\deg(\alpha)=0$.  The
first condition together with equivariance says $\alpha$ factors
through $C$; the second condition says $\alpha$ is constant.
\end{proof}

\begin{remark} \label{alten}
The following gives an alternative proof of non-negativity of the
energy in the case that any Mundet semistable map $(P,u)$ has
non-empty semistable locus $u^{-1}(P(X^{\ss}))$, see Corollary
\ref{large} below.  In this case an invariant ample divisor $D \subset
X$ is given by choosing an invariant section of the ample bundle
$\ti{X}^k$ for $k$ large, as in \eqref{invsection}.  Let
$$D \qu G : = (X^{\ss} \cap D)/G \subset X \qu G $$  
denote the associated divisor in the git quotient.  We may assume that
the divisor $D \qu G \subset X \qu G \subset X/G$ does not contain
$v(C)$, since $D \qu G$ is ample.  Since the divisor $D$ is
$G$-invariant and ample, $D$ contains the unstable locus, that is, $ D
\supset (X \ssm X^{\ss}) .$ The divisor $D$ then induces a divisor
$$P(D) = P \times_G D \subset P(X) .$$ 
Let $u^{X \qu G}: C \to X \qu G$ denote the induced map to the
symplectic quotient.  Since $u(C)$ is not contained in $P(D)$, the
pairing is the number of intersection points counted with
multiplicity:
$$ (v_*[C], c_1(P \times_G \ti{X})) = \# u^{-1} (P(D)) .$$
If the pairing is zero then the image of $u$ is contained in the
semistable locus, and $u$ induces a constant map to $X \qu G$.  Hence
the bundle and section are trivializable.
\end{remark}

\subsection{Convex targets}
\label{convex}

The definition of Mundet semistability also gives good moduli spaces
in the cases of some affine targets.  A finite dimensional complex
$G$-vector space $V$ is said to be {\em convex} if there exists a
central one-parameter subgroup $\phi_\xi: \C^\times \to G$ such
that $V$ has positive weights for the induced action of
$\phi_\xi$,
$$ V = \bigoplus_\mu V_i, \quad \mu_i(\xi) > 0, \quad i =1,\ldots,k .$$
Given a convex $G$-vector space, the {\em projectivization} of $V$ is
the quotient
$$ \ol{V} = ((V \times \C)^\times - \{ (0 , 0) \})/\C_\xi^\times $$
where $\C^\times$ acts on $\C$ with weight one.  Thus $\ol{V}$ is a
weighted projective space (in a particular a smooth Deligne-Mumford
stack) and contains $V$ as an open substack.  A quasiprojective
$G$-variety $X$ is {\em convex} if there exists a projective embedding
$\pi: X \to V$ to a convex $G$-vector space $V$ whose image intersects
the locus $V - \{ 0 \}$.  The following is a simple application of the
technique called {\em symplectic cutting} in the literature
\cite{le:sy2}:

\begin{lemma} Any convex $G$-variety $X$ embeds as a dense open
  substack of a Deligne-Mumford stack $\ol{X}$ with complement a prime
  $\C^\times_\xi$-fixed divisor isomorphic to $(X - \{ 0
  \})/\C_\xi^\times$.
\end{lemma}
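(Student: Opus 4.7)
The plan is to construct $\ol{X}$ directly by the symplectic (GIT) cut construction already used to define $\ol{V}$, restricted to $X \subset V$. On the product $X \times \C$ I would consider the $\C^\times_\xi$-action
$$ z \cdot (x,c) = (\phi_\xi(z) \cdot x,\, z c) .$$
Because $\phi_\xi$ acts on $V$ with strictly positive weights, its only fixed point on $V$ (hence on $X \subset V$) is the origin, and so the only point of $X \times \C$ with infinite $\C^\times_\xi$-stabilizer is $(0,0)$---which need not even lie in $X \times \C$. Every other stabilizer is a finite cyclic group of roots of unity, by integrality and positivity of the weights. Setting $Z := (X \cap \{0\}) \times \{0\}$, I would define
$$ \ol{X} := \bigl[ \bigl( (X \times \C) \setminus Z \bigr) / \C^\times_\xi \bigr] .$$

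First I would check that $\ol{X}$ is Deligne-Mumford: all stabilizers are finite by the previous paragraph, and the quotient sits as a closed substack of the analogously defined $\ol{V}$, which is already a smooth DM stack. Next I would produce the open immersion $X \hookrightarrow \ol{X}$. On the open locus $\{c \ne 0\}$ the $\C^\times_\xi$-action is free, since $c$ transforms with weight $1$; thus the slice $\{c = 1\}$ gives an isomorphism from $X$ onto this open substack, which is dense by construction.

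Then I would identify the complement. The $\C^\times_\xi$-invariant coordinate $c$ descends to a section of a line bundle on $\ol{X}$ whose vanishing locus is a Cartier divisor $D_\infty$, set-theoretically isomorphic to $(X \setminus \{0\})/\C^\times_\xi$; this quotient is again Deligne-Mumford by positivity of the weights of $\phi_\xi$ on $V$. It is prime because $X$ is irreducible. For the $\C^\times_\xi$-fixed assertion, observe that the original $G$-action on $X \times \C$ is trivial on the $\C$-factor, hence commutes with the quotienting subgroup and descends to $\ol{X}$; using the quotient relation $(x,c) \sim (\phi_\xi(w) x, w c)$, the residual action of $z \in \C^\times_\xi \subset G$ on $[x,c] \in \ol{X}$ works out to $[x,c] \mapsto [x, z^{-1} c]$, which is trivial precisely on $D_\infty = \{c = 0\}$.

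The main technical point I expect is the DM/separatedness bookkeeping in the case $0 \in X$: removing $(0,0)$ is essential to avoid an infinite stabilizer and to obtain the expected finite gerbe structure along $D_\infty$, and one should check that the local model near $D_\infty$ really matches the weighted-blowup picture. Apart from that the statement is a routine consequence of the weighted projective / GIT cut formalism once the $\C^\times_\xi$-action is in hand.
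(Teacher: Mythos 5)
Your construction coincides with the paper's: the paper forms $\ol{X} = (X \times \C) \qu \C^\times_\xi$ using the linearization $\ti{X}(l)$ for $l$ large, whose unstable locus is exactly the preimage of $(0,0)$, so the GIT quotient is the same stack quotient of $(X \times \C)$ minus your locus $Z$. Your verification of the open immersion, the divisor at infinity, and the residual $\C^\times_\xi$-action is correct and in fact more detailed than the paper's proof, which mainly records that the induced morphism $\ol{X} \to \ol{V}$ is proper (hence $\ol{X}$ is proper, a fact used later).
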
 

\begin{proof} 
Let $\ti{X} \to X$ denote the given linearization on $X$ and
$\ti{X}(l)$ the linearization on $X \times \C$ obtained by twisting by
the $\C^\times$-character with weight $l$.  Consider the git quotient
$$ \ol{X} = (X \times \C) \qu \C^\times_\xi .$$
The inverse image of $ (0,0) \in V \times \C$ is unstable, for
sufficiently large $d$.  Thus the proper morphism $X \to V$ induces a
proper morphism $\ol{X}$ to $\ol{V}$.  In particular, this implies
that $\ol{X}$ is also proper.  The $G$ action on $X \times \C$ given
by $g(x,z) = (gx,z)$ descends to a $G$-action on $\ol{X}$, and
restricts to the given action on the open substack $X \subset \ol{X}$.
\end{proof} 

In the following we will refer to $\ol{\M}_n^G(C,\ol{X},d)$ allowing
$\ol{X}$ to be a smooth Deligne-Mumford stack without further comment;
we do not allow stacky structures on the domain curves since we are
only interested in defining $\ol{\M}_n^G(C,X,d)$ in which case the
target $X$ is a variety.

\begin{corollary} \label{disjoint} Let $d \in H^2_G(\ol{X})$ be a class that pairs
  trivially with the divisor class $[\ol{X} - X] \in H_2^G(\ol{X})$.
  Then there exists a constant $l(E)$ such that if the energy bound
  $\mE(d) < E$ holds and $l \ge l(E)$ then the moduli stack
  $\ol{\M}_n^G(C,\ol{X},d)$ consists of maps whose images are disjoint
  from $(\ol{X} - X)/G$.  
\end{corollary}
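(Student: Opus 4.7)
The plan is to combine the hypothesis $(d,[D])=0$ (where $D:=\ol{X}-X$) with positivity of energy to force bounded-energy sections to miss $D$ once the linearization is twisted far enough. The starting point is the formula
\[
c_1^G(\ti{\ol{X}}(l)) = c_1^G(\ti{\ol{X}}(l_0)) + (l-l_0)[D]
\]
in $H^2_G(\ol{X})$, which comes from the description $\ol{X}=(X\times\C)\qu\C^\times_\xi$; under the hypothesis $(d,[D])=0$, the energy $\cE(d)=(d,c_1^G(\ti{\ol{X}}(l)))$ is thus \emph{independent} of $l$, so the bound $\cE(d)<E$ is a uniform constraint as $l$ varies.

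The next step is to pull back a $G$-invariant defining section $s_D\in H^0(\ol{X},\mO(D))^G$ under a stable section $v=(P,u)\colon\hat{C}\to\ol{X}/G$ of class $d$ to obtain a section of the degree-zero line bundle $v^*\mO(D)$. Decomposing $\hat{C}=\hat{C}_X\sqcup\hat{C}_D$ according to whether each component maps into $D/G$, the degree computation reads
\[
0=(d,[D])=m+\sum_{\hat{C}_i\subset\hat{C}_D}(d_i,c_1(N_{D/\ol{X}})),
\]
where $m\geq 0$ counts transversal intersections of $v|_{\hat{C}_X}$ with $D/G$ (including the nodes at which $\hat{C}_X$ meets $\hat{C}_D$) and $d_i=v_*[\hat{C}_i]$. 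For $l$ large enough that $\ti{\ol{X}}(l)|_D$ is ample on $D$, the degree
\[
(d_i,c_1(\ti{\ol{X}}(l)|_D))=(d_i,c_1(\ti{X}|_D))+l(d_i,c_1(N_{D/\ol{X}}))
\]
is non-negative for each bubble $\hat{C}_i\subset\hat{C}_D$ (since each such bubble is a stable map to a projective target with ample polarization, after trivializing the bundle on the contracted component). Being non-negative for all $l\geq l_0$ forces $(d_i,c_1(N_{D/\ol{X}}))\geq 0$, which combined with the intersection identity above yields $m=0$ and $(d_i,c_1(N_{D/\ol{X}}))=0$ for every $i$.

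It then remains to rule out $\hat{C}_D\neq\emptyset$ in the residual case where all normal degrees vanish. For this I would apply the Hilbert--Mumford criterion to the central one-parameter subgroup $\phi_\xi$, which fixes $D$ pointwise and acts on the fiber of $\ti{\ol{X}}(l)$ over $D$ with weight $l+O(1)$. Choosing a Borel reduction of $P$ containing $\phi_\xi$ via a Drinfeld--Simpson argument \cite{ds:red} produces a Mundet pair $(\sigma,\phi_\xi)$ whose Hilbert--Mumford contribution grows linearly in $l$ on the portion of the associated graded lying in $D$, while the Ramanathan contribution is bounded uniformly in terms of the energy bound $E$. Taking $l(E)$ to exceed this uniform bound produces a destabilizing pair, contradicting Mundet semistability, so $\hat{C}_D=\emptyset$ and $v(\hat{C})$ is disjoint from $D/G$.

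The main obstacle lies in the last step: the Mundet weight is defined via the associated graded of the full section and its Hilbert--Mumford term is read off on the root component, not directly on bubble components. The technical content is therefore to show that a bubble component mapping into $D$ (with the constraints forced by the previous steps) leaves a trace on the associated graded that the Mundet machinery detects with an HM weight that scales linearly in $l$, so that the numerical comparison against the uniform energy bound produces the promised destabilization threshold $l(E)$.
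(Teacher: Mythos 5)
Your proposal follows essentially the same route as the paper's proof: non-negativity of the degree of the normal bundle of $D=\ol{X}-X$ for components mapping into $D$ (the paper gets this directly from $N_{D/\ol{X}}\cong\mO_{\P[\mu_1,\ldots,\mu_k]}(1)$ being ample, rather than by letting $l$ vary in the restricted polarization, but the conclusion is the same), positivity of local intersection multiplicities for components not contained in $D$, the vanishing $(d,[D])=0$, and a Mundet destabilization by the central one-parameter subgroup $\xi$ with the Ramanathan term controlled by the energy via \eqref{dPP}. The one thing to fix is your final paragraph: the ``main obstacle'' you describe does not actually arise. Your own intersection identity, with $m$ counting (with positive multiplicity) every point of $\hat{C}_X$ mapping into $D$ --- in particular every node joining $\hat{C}_X$ to $\hat{C}_D$ --- together with $m=0$ and connectedness of $\hat{C}$, forces either $\hat{C}_D=\emptyset$ (and you are done) or $\hat{C}_X=\emptyset$. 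In the latter case the \emph{root component itself} maps into $D$, so the associated graded of the pair (trivial reduction, $\xi$) restricted to the root component lies in $P(D)$ and the Hilbert--Mumford term is read off exactly where the Mundet machinery wants it, growing linearly in $l$; there is no need to make the weight ``detect'' a bubble component sitting over a root component that maps into $X$. With that observation the destabilization threshold $l(E)$ comes straight from comparing the linear-in-$l$ Hilbert--Mumford term against the energy bound on $(d(P)^\vee,\xi)$, and the proof closes.
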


\begin{proof} The intersection number of any curve $u: \P^1 \to
  \ol{V}$ contained in $\ol{V} - V$ with $\ol{V} - V$ is non-negative.
  Indeed $\ol{V} - V \cong \P[\mu_1,\ldots,\mu_k]$ has ample normal
  bundle $\mO_{\P[\mu_1,\ldots,\mu_k]}(1)$.  On the other hand, there
  are no stable gauged maps $C \to X/G$ with image in $(\ol{V} - V)/G$
  for sufficiently large $l > l(d)$.  The trivial reduction $\sigma$
  together with the generator $\xi$ of the one-parameter subgroup
  $\C^\times$ has weight $\mu(\sigma,\xi) \to \infty $ as $d \to
  \infty$, while \eqref{dPP} implies that $( d(P)^\dual, \lambda )$ is
  bounded in terms of the energy.  Combining these observations let
  $v: \hat{C} \to \ol{V}/G$ be a stable gauged map intersecting
  $(\ol{V} - V)/G$.  The intersection number $\# u^{-1}(P(\ol{V}- V))
  > 0 $ is positive and equal to the pairing $(d, [\ol{V} - V]) \in
  \Q$ of $d \in H_2^G(X,\Q)$ with $[\ol{V} - V] \in H^2_G(\ol{V},\Q)$.
  The latter vanishes by assumption, a contradiction.
\end{proof} 

\begin{lemma} Suppose that $(P,u)$ is a map from $C$ to $X/G$ with $X$
  convex.  Any destabilizing pair $(\sigma,\lambda)$ has associated
  graded $(\Gr(P),\Gr(u))$ disjoint from the divisor at infinity
  $P(\ol{X} - X)$, for $l$ sufficiently large.
\end{lemma}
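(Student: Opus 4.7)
The plan is to adapt the argument of Corollary~\ref{disjoint} to the associated-graded pair $(\Gr(P),\Gr(u))$, using two crucial facts: the $G$-equivariant homology class in $H_2^G(\ol X)$ is preserved by the associated-graded degeneration, and the class of $(P,u)$ pairs trivially with the divisor class $[\ol X - X]$ because $u(C)\subset X/G$ is disjoint from the infinity divisor.

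First I would verify that the pairing $((\Gr(u))_*[\hat C],[\ol X - X])=0$. The one-parameter flow $z \mapsto z^\lambda\cdot u$ over $z \in \C^\times$ defines a family of gauged maps $(P^\lambda,u^\lambda)$ whose sections all have image in $P^\lambda(X)/G \subset P^\lambda(\ol X)/G$. The associated-graded construction~\eqref{assocgrad} extends this family flatly across $z=0$, with central fiber $(\Gr(P),\Gr(u))$ after allowing the section to bubble. Flatness implies the class in $H_2^G(\ol X)$ is constant in $z$, so its pairing with $[\ol X - X]$ equals the value at any $z\neq 0$, which vanishes.

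Next I would invoke ampleness of the normal bundle $N_{\ol X - X/\ol X}$, built into the symplectic-cut construction $\ol X=(X\times\C)\qu\C^\times_\xi$: the twist $\ti X(l)$ restricts over $\ol X - X$ to a positive multiple of the normal bundle, up to bounded corrections. Every irreducible component of $\Gr(u)$ meeting $P(\ol X - X)$ contributes non-negatively to the pairing with $[\ol X - X]$, with strict positivity either at a transverse intersection or from a component mapping non-constantly into $P(\ol X - X)$ with nonzero degree. Summing to zero therefore forces every component of $\Gr(u)$ to be either disjoint from $P(\ol X - X)$ or a constant bubble mapping to a point of $P(\ol X - X)$.

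Finally I would exclude constant bubbles at infinity for $l$ sufficiently large. Since $\xi$ is central, $\xi \in \z_+(Q_\lambda)$ and $(\sigma,\lambda+t\xi)$ is a legitimate modification of the destabilizing pair for any $t>0$ with the same reduction. At a point $x \in \ol X - X$ the $\phi_\xi$-weight on $\ti X(l)|_x$ equals $l$ plus a bounded correction, whereas the Ramanathan contribution $\mu_{BG}(\sigma,\xi)=(d(P)^\dual,\xi)$ is independent of $l$; hence a constant bubble at $x$ would force a linearly-in-$l$ growth of a Mundet-type weight concentrated on the bubble. Since the contributions from the root component and from the other bubbles are bounded independently of $l$ for fixed energy of $(P,u)$, choosing $l$ beyond a threshold depending only on the energy rules out this scenario. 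The main obstacle is making this last step precise: unlike a bubble with positive $(\ol X - X)$-degree, a constant bubble does not affect the intersection number with $[\ol X - X]$, so the exclusion must be carried out through an auxiliary Hilbert--Mumford computation that carefully decouples the bubble's contribution from that of the root component, using the definition of $\mu_X$ at a limit point on a $\lambda$-fixed section together with centrality of $\xi$.
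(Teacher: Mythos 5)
Your opening step (the intersection number of $\Gr(u)$ with $P(\ol{X}-X)$ vanishes because $u$ lands in $P(X)$ and the class is preserved in the degeneration) agrees with the paper's, but the argument goes astray afterwards, and the step you yourself flag as the ``main obstacle'' is exactly where the paper's proof actually lives. Two problems. First, your intersection-theoretic dichotomy is wrong: you assert that every component mapping non-constantly into $P(\ol{X}-X)$ contributes positively, but the normal bundle of $P(\ol{X}-X)$ in $P(\ol{X})$ is only \emph{fiberwise} ample; over the root component (which maps with degree one to $C$) it is an associated bundle of $P$ and can have negative degree. So vanishing of the total intersection number does \emph{not} reduce you to ``disjoint or constant bubble'' --- the genuinely dangerous case, which your trichotomy silently discards, is that the generic limit on the root component lies entirely in the divisor at infinity. (Conversely, the constant-bubble case you try to exclude in your third step is not really a separate case: a constant bubble in the divisor sits at the end of a chain of components, and tracing the chain back either some component not contained in the divisor meets it at the node, giving a strictly positive contribution, or the root component itself is contained in the divisor.)

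Second, the missing ingredient is a concrete Hilbert--Mumford computation, not a ``decoupling'' of bubble contributions. The paper reduces to a torus by refining $\sigma$ to a Borel reduction (as in Lemma~\ref{positivity}), writes $u$ in weight components $u_1,\dots,u_k$ for the weights $\mu_1,\dots,\mu_k$ of $V$, and observes that the limit under $z^\lambda$ lands in $P(\ol{X}-X)$ precisely when $m:=\min\{\mu_i(\lambda)/\mu_i(\xi)\,:\,u_i\neq 0\}$ is negative (if all $\mu_i(\lambda)\ge 0$ on the nonvanishing components the limit stays in $P(X)$). The surviving components are those achieving the minimum, and the weight of $\C^\times_\lambda$ on the fiber of $\ti{X}(l)/\C^\times_\xi$ over such a limit point is $ml$ up to bounded terms. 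Since $m<0$ and $\mu_{BG}(\sigma,\lambda)$ is independent of $l$, the Mundet weight $\mu_{BG}(\sigma,\lambda)+\mu_X(\sigma,\lambda)$ becomes negative for $l$ large, contradicting the assumption that $(\sigma,\lambda)$ is destabilizing. Your proposal gestures at this (``the definition of $\mu_X$ at a limit point on a $\lambda$-fixed section''), but applies it to the wrong object (a constant bubble rather than the root component) and does not supply the computation; as written the proof is incomplete.
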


\begin{proof} 
  As in the proof of Lemma \ref{positivity}, by choosing a Borel
  structure refining the parabolic structure and passing to the
  associated graded we may assume that $G$ is a torus.  Suppose that
  $(\sigma,\lambda)$ has associated graded $\Gr(P),\Gr(u)$
  intersecting $P(\ol{X}- X)$.  By invariance of intersection number,
  the limit $\Gr(u)$ must take values in $P(\ol{X} - X)$, since
  otherwise the intersection number with $P(\ol{X} - X)$ would be
  positive.  We suppose that $\mu_1,\ldots, \mu_k$ are the weights of
  $G$ on $V$, so that $u$ has components $u_1,\ldots, u_k$.  From the
  description of the associated graded, if $\lambda$ satisfies
  $\mu_i(\lambda) \ge 0$ for all $i = 1,\ldots, k$ such that $u_i$ is
  non-zero, then the associated graded takes values in $P(X)$.  Hence
  $\mu_i(\lambda) > 0$ for non-empty subset $I \subset \{ 1,\ldots, k
  \}$ of indices such that $u_i$ is non-zero.  Let
$$m = \min_{i \in I} \frac {\mu_i(\lambda)}{ \mu_i(\xi)}. $$
The minimum $m$ is negative since some $\mu_i(\lambda) < 0 $ and
$\mu_i(\xi) > 0$ for all $i = 1,\ldots, k$.  The associated graded
section is then given by the collection of sections $u_i, i \in I$
with $\mu_i(\lambda)/\mu_i(\xi) = m$.  The corresponding
Hilbert-Mumford weight is the weight of the action of
$\C^\times_\lambda$ on the fibers of $\ti{X} / \C^\times_\xi$, and is
equal to $ml$.  Therefore, the weight $\mu(x,\lambda)$ is positive.
For $l$ sufficiently large the pair is not destabilizing.
\end{proof}

As a result, for convex target it suffices to check semistability for
pairs such that the associated graded exists without compactification.

\section{Variation of polarization}

The moduli space of Mundet-semistable gauged maps depends on the
linearization.  Changing the linearization leads to wall-crossing in
which loci of bundles with the same associated graded are flipped
\cite{cross} as is standard in variation of git as explained in e.g
Thaddeus \cite{th:fl}.  Consider the family of linearizations
$\ti{X}^k$ given by the $k$-th tensor product of the given one $\ti{X}$
for $k$ a positive integer.  While taking tensor products does not
change the definition of semistability for $X$, it does change the
definition of Mundet semistability.

\begin{lemma} \label{finitelem} For any fixed degree $d \in H_2^G(X)$,
  there are at most finitely many changes in the stability condition
  as $k$ varies.  That is, there exist
$$ -\infty = k_0 < k_1 < \ldots < k_l =\infty \in \Q \cup \{
-\infty,\infty \} $$
such that if $k',k'' \in (k_i,k_{i+1})$ then the semistable loci for
$k',k''$ are equal.
\end{lemma}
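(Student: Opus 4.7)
The plan is to exploit the affine-linear dependence of the Mundet weight on $k$ and to bound the discrete data entering the destabilizing inequality.

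First, observe the key decomposition: under $\tilde{X} \rightsquigarrow \tilde{X}^k$ the Hilbert-Mumford weight scales as $\mu_X \mapsto k\mu_X$, while the Ramanathan weight $\mu_{BG}$ is independent of the linearization on $X$. Hence
\[
\mu_k(\sigma,\lambda) \;=\; \mu_{BG}(\sigma,\lambda) + k\,\mu_X(\sigma,\lambda)
\]
is affine-linear in $k$. The semistable locus can only change at rational parameters $k^\ast = -\mu_{BG}(\sigma,\lambda)/\mu_X(\sigma,\lambda)$ (with $\mu_X(\sigma,\lambda) \neq 0$) at which some pair $(\sigma,\lambda)$ destabilizes some $(P,u)$ of class $d$, so the lemma reduces to showing that only finitely many such ratios arise.

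Next, I would bound the numerator and denominator separately, restricting as usual to maximal parabolics $Q \subset G$ (finitely many up to conjugation) and to primitive dominant coweights $\lambda \in \z_+(Q) \cap \g_\Z$. For the Hilbert-Mumford contribution, the Hesselink-Kirwan stratification \eqref{kn} allows the destabilizing direction $\lambda$ to be taken in the finite set $\cC(X)$ from \eqref{CX} (up to Weyl action and positive scaling), whereupon $\mu_X(\sigma,\lambda)$ is the weight of $z^\lambda$ on $\tilde{X}$ at a fixed-point component $Z_\lambda$ and therefore lies in a finite set determined by the torus weights of $X$. For the Ramanathan contribution $\mu_{BG}(\sigma,\lambda) = \deg P(\C_{\lambda^\vee})$, the class $d$ fixes the topological type of $P$; combined with boundedness of parabolic reductions of bounded type on $C$, this forces $\deg P(\C_{\lambda^\vee})$ to lie in a bounded integer range, hence in a finite set.

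Combining these, the wall-values $-\mu_{BG}/\mu_X$ form a finite subset of $\Q$, yielding the claimed finite partition of the $k$-line. The main obstacle is the uniform boundedness assertion: while the stack of $k$-Mundet semistable pairs is of finite type for each individual $k$ by the Schmitt GIT presentation \eqref{schmittgit}, one needs that the parabolic reductions contributing walls have degrees bounded \emph{independently} of $k$. This I would arrange by observing that the associated graded $(\Gr(P),\Gr(u))$ of any destabilizing pair is itself a gauged map of class $d$ equipped with a reduction of structure group, so that Schmitt's quot-scheme compactification realizes all such configurations inside a single finite-type ambient space once the level $l$ is chosen large enough, which supplies the required uniform bound and completes the argument.
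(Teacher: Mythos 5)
Your reduction of the problem to finiteness of the wall values $k^\ast=-\mu_{BG}(\sigma,\lambda)/\mu_X(\sigma,\lambda)$ is the right starting point, but the uniform boundedness assertion at the heart of your argument is not merely unproven --- it is false, and the paper's proof works precisely by exploiting its failure. After reducing to the torus via the associated graded, condition \eqref{dP2} shows that a pair of class $d$ which is strictly semistable at parameter $k$ (hence responsible for a wall at $k$) must have reduction degree satisfying $\Vert d(P_k)\Vert>ck$ for a fixed $c>0$; here $d(P_k)$ is the degree of the associated graded torus bundle, which is \emph{not} determined by $d$ when $G$ is nonabelian. So the quantities $\mu_{BG}(\sigma,\lambda)=\deg P(\C_{\lambda^\vee})$ contributing walls grow linearly in $k$ rather than lying in a bounded range. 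Your proposed remedy --- realizing all associated gradeds inside a single finite-type quot scheme --- does not repair this: Schmitt's finite-type GIT presentation concerns the $k$-semistable locus for one fixed $k$ (and the level $l$ needed for global generation depends on the splitting type, which is exactly the unbounded quantity), so asserting that a single ambient space captures the semistable loci for all $k$ is essentially the statement of the lemma and the argument becomes circular. A secondary issue: restricting $\lambda$ to the finite set $\cC(X)$ of \eqref{CX} controls only the optimal destabilizing directions for the Hilbert--Mumford function on $X$; at a Mundet wall the vanishing combination is $\mu_{BG}+k\mu_X$, whose critical $\lambda$ need not be a Kirwan--Ness direction, so the denominator $\mu_X(\sigma,\lambda)$ is not confined to a finite set as $\lambda$ ranges over primitive dominant coweights.

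What actually closes the argument in the paper is an energy estimate, not a degree bound. Granting that wall pairs at parameter $k$ satisfy $\Vert d(P_k)\Vert>ck$, one computes
$$ (u_{k,*}[C],c_1(P_k(\ti{X}))) = (\theta - d(P_k)/k, d(P_k)) + d(u_k) + (d(P_k),d(P_k))/k \ \ge\ (d(P_k),d(P_k))/k \ \ge\ c^2 k, $$
the dropped terms being nonnegative for a generically semistable section as in the proof of Lemma \ref{positivity}. If walls occurred for $k$ in an unbounded set, the energy, hence the class $v_{k,*}[C]\in H_2^G(X)$, would be unbounded, contradicting the hypothesis that $d$ is fixed. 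The correct logic is therefore: walls at large $k$ require large reduction degree, large reduction degree costs energy quadratically through the term $(d(P_k),d(P_k))/k$, and the fixed class $d$ caps the energy. You should replace your uniform boundedness claim with this estimate.
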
 

\begin{proof}   Denote by $\mu_k(\sigma,\lambda)$ the
  Mundet weight corresponding to $\ti{X}^k$.  Changes in the
  definition of stability correspond to pairs $(P,u)$ such that for
  some pair $(\sigma,\lambda)$ and $k_-,k_+ \in \Q$,
$$ \mu_{k_-}(\sigma,\lambda) < 0 , \quad \mu_{k_+}(\sigma,\lambda) > 0
  $$
while for some $k\in (k_-,k_+)$, $ \mu_{k}(\sigma,\lambda ) = 0 $ so
that the pair $(P,u)$ is semistable but not stable.  As in \ref{torus action on  P(V)}(1),
the wall-crossings arise from pairs $(P,u)$ such that
\begin{equation} \label{dP2}
\dim( \on{hull} ( \{ \mu_i | \ti{u}_i \neq 0 \})) < \rank(G),\quad
\on{hull} ( \{ \mu_i | \ti{u}_i \neq 0 \}) \ni \theta + d(P)/k.
\end{equation}

Suppose there are infinitely many wall-crossings.  Let $(P_k,u_k)$
denote the corresponding reducible gauged maps for some $k$ in an
unbounded set $\cW(d) \subset \Q$. The equation \eqref{dP2} implies
that $ \Vert d(P_k) \Vert > c k$ for some positive constant $c$ and
all $k \in \cW(d)$.  On the other hand, as in \eqref{dPP}
\begin{eqnarray*}\label{dPP2} (u_{k,*}[C],c_1(P_k(\ti{X}))) &=& (\theta,d(P_k))+ d(u_k) \\
&=& (\theta - d(P_k)/k,
d(P_k))+ d(u_k) + (d(P_k)/k,d(P_k)) \\ & \ge& (d(P_k),d(P_k))/k  \ge  c^2 k  \end{eqnarray*}
for $k \in \cW(d)$.  Since $\cW(d)$ is unbounded, this implies that
the homology class $ v_{k,*}[C] \in H_2^G(X)$ is also unbounded, a contradiction.
\end{proof} 

Denote by $\ol{\M}^{G}(C,X,d,k)$ the moduli space of Mundet semistable
maps using the linearization $\ti{X}^k$.  By the finite-ness above in
Lemma \ref{finitelem}, we have the following:

\begin{corollary} \label{large}  For any $d \in H_2^G(X,\Q)$ there exists
$k(d)$ such that for $k \ge k(d)$, the stack $\ol{\M}^G(C,X,d,k)$
  consists of those bundles that are Mundet semistable for all $k \ge
  k(d)$, that is,
$$ ( k_1,k_2 \ge k(d)) \ \implies \ (\ol{\M}^G(C,X,d,k_1) =
\ol{\M}^G(C,X,d,k_2) ) .$$
More precisely, an object $(\hat{C},P,u,\ul{z})$ is destabilized by
$(\sigma,\lambda)$ for some $k \ge k(d)$ iff it is destabilized for
all $k \ge k(d)$.
\end{corollary}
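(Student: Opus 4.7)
The plan is to deduce this directly from Lemma \ref{finitelem}. That lemma provides a finite sequence $-\infty = k_0 < k_1 < \cdots < k_l = +\infty$ of walls such that the Mundet semistable locus in class $d$ is constant on each open interval $(k_i, k_{i+1})$. I would simply set $k(d)$ to be any rational number strictly greater than the largest finite wall $k_{l-1}$. Then for any $k_1, k_2 \ge k(d)$ both values lie in $(k_{l-1}, \infty) = (k_{l-1}, k_l)$, so by the lemma $\ol{\M}^G(C,X,d,k_1) = \ol{\M}^G(C,X,d,k_2)$. This handles the first assertion.

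For the refined statement about destabilizing pairs, I would exploit the affine dependence of the Mundet weight on $k$. Because replacing $\ti{X}$ by $\ti{X}^k$ scales the Hilbert--Mumford part of the weight by $k$ while leaving the Ramanathan part unchanged, one has
\[
\mu_k(\sigma,\lambda) \;=\; \mu_{BG}(\sigma,\lambda) \,+\, k\,\mu_X(\sigma,\lambda),
\]
an affine function of $k$ with at most one sign change. Fixing a pair $(\sigma,\lambda)$, the set $\{k \ge 0 : \mu_k(\sigma,\lambda) \ge 0\}$ is thus either empty, all of $[0,\infty)$, a half-line $[k^*,\infty)$, or an initial interval $[0,k^*]$. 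So once $k(d)$ exceeds the unique root $k^*(\sigma,\lambda)$ (when one exists), the sign of $\mu_k(\sigma,\lambda)$ is constant for $k \ge k(d)$, giving the desired biconditional for this particular pair.

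To upgrade this to a single $k(d)$ that works for every pair $(\sigma,\lambda)$ that could conceivably destabilize some $(\hat C, P, u, \ul z)$ of class $d$, I would invoke the boundedness that already underlies Lemma \ref{finitelem}: the proof of that lemma shows that destabilizing pairs which actually arise in class $d$ have $\Vert d(P) \Vert$ controlled in terms of the energy, so the roots $k^*(\sigma,\lambda)$ are bounded above (indeed, they accumulate at the finitely many walls $k_1,\dots,k_{l-1}$). Taking $k(d) > k_{l-1}$ therefore simultaneously dominates all the relevant $k^*(\sigma,\lambda)$.

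The main (mild) obstacle is precisely this last uniformity step: one has to know that not only are there finitely many walls (which is Lemma \ref{finitelem}), but also that the discrete data of a potential destabilizing pair is bounded for fixed $d$, so that the affine sign-stabilization argument can be applied uniformly. This is exactly the bound on $\Vert d(P_k)\Vert$ used in the proof of Lemma \ref{finitelem}, so no new ingredient is required beyond what is already established.
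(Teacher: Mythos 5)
Your proposal is correct and takes essentially the same route as the paper, which states the corollary as an immediate consequence of Lemma \ref{finitelem} with no further argument: your choice of $k(d)$ beyond the last finite wall, the observation that $\mu_k(\sigma,\lambda)=\mu_{BG}(\sigma,\lambda)+k\,\mu_X(\sigma,\lambda)$ is affine in $k$ with at most one sign change, and the appeal to the bound on $\Vert d(P)\Vert$ from the proof of that lemma for uniformity over all pairs are exactly the implicit content of the paper's derivation. (Only a wording quibble: the relevant roots $k^*(\sigma,\lambda)$ are \emph{among} the finitely many walls rather than ``accumulating at'' them.)
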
 

The following describes the Mundet semistability condition for large
$k$.

\begin{lemma} \label{genlem} For any $d \in H_2^G(X,\Q)$ there exists
  $k(d)$ such that for $k \ge k(d)$, the stack $\ol{\M}^G(C,X,d,k)$
  has objects given by tuples $(P,\hat{C},u, \ul{z})$ for which $(u |
  \hat{C}_0)^{-1}(X^{\ss}/G)$ is non-empty.
\end{lemma}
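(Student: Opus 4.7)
The plan is to show that any Mundet semistable gauged map $(P,\hat{C},u,\ul{z})$ of class $d$ whose restriction $u|\hat{C}_0$ to the root component avoids the semistable locus $X^{\ss}/G$ entirely must be destabilized once $k$ is large enough; the uniform threshold $k(d)$ then follows. The destabilizing data will be extracted from the Hesselink-Kirwan-Ness stratification of the unstable locus, after which the two contributions to the Mundet weight scale differently in $k$ and force a contradiction.

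Concretely, suppose $u(\hat{C}_0) \subset X^{\us}/G$. Applying \eqref{kn}, $X^{\us} = \bigcup_{\lambda \in \cC(X)} X_\lambda$ with $X_\lambda = G \times_{Q_\lambda} Y_\lambda$. Since $\cC(X)$ is finite and the strata are locally closed in $X/G$, the section $u|\hat{C}_0$ factors generically through a single stratum $X_{\lambda_0}/G$ for some $\lambda_0 \in \cC(X)$. Using the identification $X_{\lambda_0}/G \cong Y_{\lambda_0}/Q_{\lambda_0}$, this generic factorization determines a parabolic reduction $U \to P/Q_{\lambda_0}$ over a dense open $U \subset \hat{C}_0$; properness of $G/Q_{\lambda_0}$ then extends it uniquely to a global reduction $\sigma : \hat{C}_0 \cong C \to P/Q_{\lambda_0}$. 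I would then evaluate the Mundet weight of $(\sigma,\lambda_0)$ for the linearization $\ti{X}^k$,
\[
\mu^{(k)}(\sigma,\lambda_0) = \mu_{BG}(\sigma,\lambda_0) + k\,\mu_X(\sigma,\lambda_0).
\]
The Hilbert-Mumford contribution $\mu_X(\sigma,\lambda_0)$ is computed on the associated graded $\Gr(u)$, whose root-component image lies in the $\C^\times_{\lambda_0}$-fixed component $Z_{\lambda_0}$; by the defining condition \eqref{CX} of $\cC(X)$ this weight equals $(\lambda_0,\lambda_0) > 0$. The Ramanathan part $\mu_{BG}(\sigma,\lambda_0) = \deg(\sigma^* P \times_{Q_{\lambda_0}} \C_{\lambda_0^\vee})$ is an integer independent of $k$.

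The main obstacle is to produce a uniform lower bound $\mu_{BG}(\sigma,\lambda_0) \ge -M(d)$, holding for all gauged maps of class $d$ with $u(\hat{C}_0) \subset X^{\us}/G$ and all $\lambda_0 \in \cC(X)$. I would derive such a bound from the properness of Schmitt's compactification $\ol{\M}^{G,\quot}(C,X,d)$ at any auxiliary reference linearization: the locus of pairs $(P,u)$ whose section factors generically through a fixed stratum $X_{\lambda_0}/G$ is a locally closed subscheme of finite type, so the integer-valued invariant $\mu_{BG}(\sigma,\lambda_0)$ takes only finitely many values on it; minimizing over the finite set $\cC(X)$ supplies $M(d)$. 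With $M(d)$ available, setting
\[
k(d) := 1 + \lfloor M(d) \,/\, \min_{\lambda \in \cC(X)}(\lambda,\lambda) \rfloor
\]
gives $\mu^{(k)}(\sigma,\lambda_0) \ge -M(d) + k(\lambda_0,\lambda_0) > 0$ for all $k \ge k(d)$, contradicting Mundet semistability and completing the proof. The hard step throughout is the uniform boundedness of $\mu_{BG}$ across the family; the remainder is bookkeeping using the Kirwan stratification and the $k$-linear scaling of the target weight.
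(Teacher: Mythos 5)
Your overall strategy---extract the generic Kirwan--Ness stratum $X_{\lambda_0}$ of $u|\hat{C}_0$ from the stratification \eqref{kn}, extend the induced reduction over all of $\hat{C}_0$ by completeness of $G/Q_{\lambda_0}$, and observe that the Hilbert--Mumford part of the Mundet weight of $(\sigma,\lambda_0)$ is the positive number $(\lambda_0,\lambda_0)$ from \eqref{CX} while the Ramanathan part is independent of $k$---is exactly the ``conversely'' half of the paper's argument. The difference, and the gap, lies in how the threshold $k(d)$ is made uniform. The paper does not bound $\mu_{BG}(\sigma,\lambda_0)$ directly; it invokes Corollary \ref{large}, i.e.\ the finiteness of wall-crossings in $k$ established in Lemma \ref{finitelem}, which in turn rests on an energy estimate showing that $(d(P_k),d(P_k))/k$ is bounded by the energy, so walls cannot accumulate as $k \to \infty$ for fixed class $d$.

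Your substitute for this step does not work as stated. You need a lower bound $\mu_{BG}(\sigma,\lambda_0) \ge -M(d)$ uniform over all pairs $(P,u)$ of class $d$ with $u(\hat{C}_0) \subset X^{\us}/G$ that are Mundet semistable for $\ti{X}^k$ for \emph{some} $k$, and you propose to obtain it because such pairs form a finite-type locus inside $\ol{\M}^{G,\quot}(C,X,d)$ taken with respect to an auxiliary reference linearization. But a pair that is semistable for $\ti{X}^k$ with $k$ large need not be semistable for the reference linearization, so it need not appear in that compactification at all; the family you are minimizing over is not contained in any single Schmitt moduli space, and its boundedness is precisely what is at stake. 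For nonabelian $G$ the degree of the canonical parabolic reduction is not controlled by the topological type $d(P)$ alone (consider the frame bundle of $\mO_C(n)\oplus\mO_C(-n)$ for $G = SL_2$ as $n$ grows), so the claim that $\mu_{BG}$ takes only finitely many values on the relevant locus is essentially equivalent to the wall-crossing finiteness the paper proves separately. To close the gap, either cite Corollary \ref{large} (after which your weight computation finishes the proof) or reprove the needed boundedness by the energy argument of Lemma \ref{finitelem}. A minor further remark: the paper also establishes the reverse implication (generically semistable maps are Mundet semistable for large $k$), which you do not address, although the direction you treat is the one needed for the stated inclusion.
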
 

\begin{proof}  It suffices to
show that if $v: C \to X/G$ is a Mundet unstable gauged map with class
$d$ for large $k$, then $v(C)$ is contained in some Kirwan-Ness
stratum $X_\lambda/G$ and vice versa.  Let $(\sigma,\lambda)$ be a
pair destabilizing $u$:
$$ \mu(\sigma,\lambda) = \mu_{BG}(\sigma,\lambda) + k
\mu_X(\sigma,\lambda) > 0 .$$
 By Corollary \ref{large}, we may assume that $v = (P,u)$ is Mundet
 destabilized by $(\sigma,\lambda)$ for all $k \ge k(d)$.  Then the
 Hilbert Mumford weight
$\mu_X(\sigma,\lambda) > 0 $
must be positive.  The associated graded $\Gr(u)$ is contained in the
fixed point set $Z_\lambda$, that is,
$ \Gr(u) ( \hat{C}_0) \subset P(Z_\lambda) $,
and $(\Gr(P),\Gr(u))$ has positive Hilbert-Mumford weight with respect
to $(\sigma,\lambda)$.  Thus $u$ is generically unstable. 

Conversely, suppose that $v = (P,u): \hat{C} \to X/G$ takes values in
some stratum $X_\mu/G$ generically on the root component $\hat{C}_0
\subset \hat{C}$.  The stratum fibers

\begin{equation} \label{Qproj}X_\mu = G \times_{Q_\mu} Y_\mu \to G/Q_\mu\end{equation} 
as in \eqref{kn}.  By composition with the map $P(X) \to P(Q_\mu) =
P/Q_\mu$ arising from \eqref{Qproj} we obtain a map
$ \sigma: (u | \hat{C}_0 \cap u^{-1}(X_\mu)) \to P/Q_\mu $.
Locally $P$ is trivial, and so in a neighborhood of any point in
$u^{-1}(X_\mu)$ the map is given by a map to $G/Q_\mu$.  By
completeness of $G/Q_\mu$ this map extends to $\sigma: \hat{C}_0 \to
P/Q_\mu$, by definition a parabolic reduction $\sigma$.  Consider the
one-parameter subgroup generated by a positive coweight $\mu$.  The
associated graded $\Gr(u)$ maps to $Z_\mu$ on the root component.  The
Hilbert-Mumford weight $\mu_X(\sigma,\lambda)$ is positive, by
construction. Hence $\mu_{BG}(\sigma,\lambda) + k
\mu_X(\sigma,\lambda)$ is positive for large $k$, and the pair $(P,u)$
is Mundet unstable for large $k$.
\end{proof}

\section{Scaled gauged maps} 

The Mundet semistable moduli spaces have a large linearization limit
which includes both stable maps to the git quotient as well as what we
called affine gauged maps.  This is an algebraic version of a limit
that was first studied in the symplectic context by Gaio-Salamon
\cite{ga:gw}.

\begin{definition}  {\rm (Scaled gauged maps)}  \label{scaledgauged} A prestable {\em scaled gauged map} is a datum
  $(P,\hat{\cC},u, \delta,\ul{z})$ consisting of a prestable scaled
  curve $(\hat{\cC},\delta,\ul{z})$ and pair $(P \to C,u :\hat{\cC}
  \to P(X) )$ giving a map to the quotient stack $\hat{C} \to X/ G$.
  In the case that $X \qu G$ is an orbifold, the domain $\hat{C}$ is
  allowed to have a twisted stacky structure $\hat{\cC}$ so that the
  points with non-trivial automorphism are nodes with infinite scaling
  and the data above gives a representable morphism $v: \hat{\cC} \to
  X \qu G$ as in \cite{agv:gw}.  Denote by
$$D_\infty = \P(\omega_{\hat{\cC}/C}) \subset \P(\omega_{\hat{\cC}/C}
  \oplus \mO_{\hat{\cC}}), \quad \text{resp.} \quad D_0 =
  \P(\mO_{\hat{\cC}}) \subset \P(\omega_{\hat{\cC}/C} \oplus
  \mO_{\hat{\cC}})$$
the divisor at infinity resp.  the zero section.  The datum
$(P,\hat{\cC},u,\delta)$ is {\em semistable} if either
\begin{enumerate}
\item the scaling $\delta | \hat{C}_0$ is finite, and the datum
  $(P,\hat{\cC},u)$ is Mundet semistable; here we are interested in
  the chamber $k \ge k(d)$ from Lemma \ref{genlem}, or
\item the scaling $\delta | \hat{\cC}_0$ on $\hat{\cC}_0$ is infinite,
  and $\delta^{-1}(D_\infty) \subset \hat{\cC}$ maps to the semistable
  locus in $X/G$,
\end{enumerate} 
A semistable scaled gauged map is {\em stable} if it has finitely many
automorphisms.
\end{definition} 

We introduce the following notation for moduli stacks.  Denote by
$\ol{\M}^G_{n,1}(C,X)$ the moduli of stable marked scaled gauged maps.
The existence of a universal scaled curve implies that again,
$\ol{\M}^G_{n,1}(C,X)$ is a hom stack from a Deligne-Mumford stack to
a quotient stack of a variety by a reductive group, and so Artin by
\cite[Proposition 2.3.4]{lieblich:rem}.

The moduli stack of stable scaled curves defines a cobordism using the
following forgetful morphism.  Forgetting everything besides the map
$\delta$ defines a morphism
$$ \rho: \ol{\M}_{n,1}^G(C,X,d) \to \ol{\M}_{0,1} \cong \P^1 , 
\quad (C,v,\delta,\ul{z} = (z_1,\ldots,z_n)) \mapsto \delta
|_{\hat{C}_0 \cong C} .$$
The fiber of $\rho$ over any non-infinite point $\alpha \in \P^1 - \{
\infty \}$ is
$$ \rho^{-1}(\alpha) \cong \ol{\M}_n^G(C,X,d) $$
the space of Mundet semistable gauged maps in the chamber $k \ge
k(d)$.  On the other hand, the fiber over infinity consists of stable
maps to $C \times X \qu G$ of degree $(1,d)$ together with bubble
trees which call {\em affine gauged maps} because of the affine
structure given by the one-form.  Affine gauged maps were introduced
first in a symplectic context by Ziltener \cite{zilt:qk}; a
Narasimhan-Seshadri correspondence which relates that viewpoint with
the one given here is in Venugopalan-Woodward \cite{venuwood:class}.

\begin{definition} \label{affinegauged} 
An affine gauged map is a datum
$$ (C,\delta,\ul{z} = (z_0,\ldots,z_n),v)$$
where $(C,\delta,\ul{z})$ is an affine scaled marked curve from
Definition \ref{affine}, and $v = (P,u): C \to X/ G$ is a morphism to
the quotient stack such that
\begin{enumerate}
\item $v( \delta^{-1}(D_\infty)) \subset X^{\ss}/G$.  In other words,
  on the locus $u^{-1}(\delta^{-1}(D_\infty))$, the map has image in
  the $X$-semistable locus; and
\item on the locus $v^{-1}(\delta^{-1}(D_0))$, the bundle is trivial.
\end{enumerate} 
In the case that $X \qu G$ is an orbifold, $C$ is equipped with an
twisted stacky structure $\cC$ with non-trivial automorphism groups
only at the nodes and marking with infinite scaling as in
\cite{agv:gw} and data above defines a representable morphism $v: \cC
\to X/G$.  Such a datum is {\em stable} if there exist only finitely
many automorphisms $\varphi \in \Aut(C,v,\delta,\ul{z})$, or in other
words, if there each component $C_i$ on which the map $v_* [C_i] = 0
\in H_2^G(X,\Q)$ the scaling is finite and non-zero (resp. zero or
infinity) has at least two (resp. three) special points.
\end{definition} 

We introduce the following notation for moduli spaces of affine gauged
maps. Let $\ol{\M}^G_{n,1}(\bA,X)$ denote the moduli space of affine
gauged maps with group $G$ and target $X$ with $n$ markings in
addition to the marking at infinity.  For each $d \in H_2^G(X,\Q)$,
let $\ol{\M}^G_{n,1}(\bA,X,d)$ denote the locus of maps with $v_*[C] =
d \in H_2^G(X,\Q)$.  The moduli stack admits natural evaluation maps
$$ \ev \times \ev_\infty : \ol{\M}^G_{n,1}(\bA,X) \to (X/G)^n \times \overline{\cI}_{X \qu
G} $$
given by evaluation at the markings $z_i, i = 1,\ldots, n$ and $z_0$.
Also define, for ease of notation,
$$ \ol{\M}_{n}(C,X \qu G,d) = \ol{\M}_{g,n}(C \times X \qu G,(1,d)) $$
the so called {\em graph space} of stable maps to $X \qu G \times C$
of degree $(1,d)$.  Generalizing the positivity of energy of stable
gauged maps in Lemma \ref{positivity} we have the following:

\begin{proposition}  \label{posen}
 Any object $(P,\hat{\cC},u,\ul{z})$ of $\ol{\M}_{n,1}^G(C,X)$ or
 object $(\cC, \ul{z},\delta,P,u)$ of $\ol{\M}_{n,1}^G(\bA,X)$ has
 non-negative energy, and vanishing energy only if the bundle and
 section are trivializable on each component.
\end{proposition}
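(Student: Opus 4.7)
The plan is to reduce to a componentwise statement: if $\hat{\cC} = \bigcup_i \hat{\cC}_i$ is the irreducible decomposition of the domain, then
\[
\cE(v) \;=\; (v_*[\hat{\cC}], c_1^G(P(\ti{X}))) \;=\; \sum_i (v_*[\hat{\cC}_i], c_1^G(P(\ti{X}))),
\]
so it suffices to bound each summand below by zero and to show that equality on a component forces triviality of bundle and constancy of section there.

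First I would handle stable scaled gauged maps with $\delta|\hat{\cC}_0$ finite. Collapsing the bubble trees attached to the root component via the Givental morphism \eqref{givmor} produces a Mundet semistable gauged map on $C$, and Lemma \ref{positivity} applied to it yields a non-negative energy contribution from $\hat{\cC}_0$. Each bubble component $\hat{\cC}_i$, $i > 0$, maps to a single point $c_i \in C$; trivializing the fiber $P_{c_i}$ identifies the restriction $u|\hat{\cC}_i$ with a stable map into $X$, and effectivity of its homology class gives a non-negative pairing with the ample class $c_1(\ti{X})$.

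For scaled gauged maps with $\delta|\hat{\cC}_0 = \infty$ and for affine gauged maps, I would instead classify each component $\hat{\cC}_i$ by the value of $\delta|\hat{\cC}_i$. If $\delta|\hat{\cC}_i = 0$, Definitions \ref{scaledgauged}(2) and \ref{affinegauged}(2) force $P|\hat{\cC}_i$ to be trivial, so the restriction is a stable map into $X$ with effective class. If $\delta|\hat{\cC}_i = \infty$, the image lies in the semistable locus, so the restriction factors through $X \qu G$ and pairs non-negatively with the induced ample line bundle on $X \qu G$ by effectivity. The remaining case is transition components on which $\delta|\hat{\cC}_i$ is finite and non-zero: these are rational curves carrying a gauged map whose image at the pole of $\delta$ lies in $X^{\ss}/G$ and whose bundle is trivial at the zero of $\delta$. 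Here I would adapt the Hilbert-Mumford computation of Lemma \ref{positivity}: reduce to a torus by Drinfeld--Simpson, decompose $V$ into weight spaces, and test against the one-parameter subgroup $\lambda = -d(P)^\vee$ to recover the analogue of \eqref{dPP}.

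The main obstacle is the transition case, since such components do not a priori satisfy a full Mundet semistability condition; the proof must combine the two weaker endpoint conditions (triviality at one node, semistability at the other) to produce an effective stability-type inequality. The rigidity in the equality case then follows componentwise: on components with trivial bundle or with image in $X^{\ss}/G$, vanishing of the effective pairing forces the class to be zero and hence the section to be constant, while on the Mundet-type root component the rigidity statement of Lemma \ref{positivity} applies directly to give $d(P) = 0$ and constancy of the section after trivializing $P$.
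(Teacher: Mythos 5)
Your componentwise decomposition is exactly the paper's strategy: the proof in the text classifies each irreducible component as carrying a Mundet-semistable map, a map to $X$, a map to $X \qu G$, or a generically semistable affine gauged map, and then quotes Lemma \ref{positivity} and Remark \ref{alten}. Your handling of the root component, the zero-scaling bubbles, and the infinite-scaling components is fine. But the case you yourself flag as ``the main obstacle'' --- the transition components with finite non-zero scaling --- is a genuine gap, and the route you sketch for it would not close. A Hilbert--Mumford computation against $\lambda = -d(P)^\vee$ as in the second proof of Lemma \ref{positivity} requires the Mundet weight inequality $\mu(\sigma,\lambda) \leq 0$, which is precisely what a transition component does not satisfy: its only constraints are the two endpoint conditions, and there is no stability inequality to test against. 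You correctly identify that something must be extracted from those endpoint conditions, but you do not say what.

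The missing idea is Remark \ref{alten}, which sidesteps Hilbert--Mumford entirely. The endpoint condition $v(\delta^{-1}(D_\infty)) \subset X^{\ss}/G$ already forces the map on a transition component to be \emph{generically} semistable, since the unstable locus is closed. One then chooses an invariant section $\sigma \in H^0(\ti{X}^k)^G$ as in \eqref{invsection}, whose vanishing divisor $D$ is ample and contains the unstable locus; because the image of the component meets the semistable locus, $u$ is not contained in $P(D)$, so the pairing $(v_*[\hat{\cC}_i], c_1(P(\ti{X})))$ is literally the count of intersection points $\# u^{-1}(P(D))$ with multiplicity, hence non-negative. Equality forces the image to avoid $P(D)$ entirely, so it lies in the semistable locus, the induced map to $X \qu G$ has degree zero and is constant, and the bundle and section are trivializable. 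This same argument also cleans up your equality analysis on the transition components, which your proposal leaves dangling for the same reason.
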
 

\begin{proof}  Each irreducible component of the domain carries either 
a Mundet-semistable map, a map to $X$, a map to $X \qu G$, or an
affine gauged map which is necessarily generically semistable.  The
statement of the proposition follows from applying Lemma
\ref{positivity} and Remark \ref{alten} to each component.
\end{proof} 

Later we will need a bound on the number of irreducible components of
the domain in terms of the energy.

\begin{corollary} \label{kcor} 
Let $k$ be an integer such that if $x$ is any object of $\Hom(\pt,X
\qu G)$, then the order of the automorphism group $| \Aut(x)|$ of $x$
divides $k$.  Any object $(P,\hat{\cC},u,\ul{z},\delta)$ of
$\ol{\M}_{n,1}^G(C,X,d)$ or object $(P,\cC, u, \ul{z},\delta)$ of
$\ol{\M}_{n,1}^G(\bA,X,d)$ with non-zero energy has energy at least
$1/k$.
\end{corollary}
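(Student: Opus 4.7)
The plan is to decompose the energy over irreducible components of the domain and combine Proposition \ref{posen} with an elementary denominator bound to see that every component's contribution lies in $\frac{1}{k}\Z_{\ge 0}$. Given an object $(P,\hat{\cC},u,\ul{z},\delta) \in \ol{\M}^G_{n,1}(C,X,d)$ with irreducible components $\hat{C}_i$ of the domain, additivity of the first Chern class under pushforward gives
\[
\cE(v) = \sum_i \cE(v | \hat{C}_i), \qquad \cE(v | \hat{C}_i) = \bigl((v | \hat{C}_i)_*[\hat{C}_i],\ c_1^G(\ti{X})\bigr).
\]
Proposition \ref{posen} ensures that each summand is non-negative, with equality iff the bundle and section are trivializable on that component. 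Hence if $\cE(v) > 0$ then at least one summand is strictly positive, and it suffices to show that each summand lies in $\frac{1}{k}\Z$.

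I would establish the denominator bound by a case analysis on the scaling. If $\delta$ is finite on $\hat{C}_i$, Definition \ref{scaledgauged} forbids any twisted stacky structure on that component, so $\hat{C}_i$ is an ordinary scheme; then $(v | \hat{C}_i)_*[\hat{C}_i] \in H_2^G(X,\Z)$ and its pairing with the integral class $c_1^G(\ti{X}) \in H^2_G(X,\Z)$ is an integer. If instead $\delta | \hat{C}_i \equiv \infty$, the map factors through $X \qu G \subset X/G$, and $\hat{C}_i$ carries a twisted stacky structure $\hat{\cC}_i$ at its nodes and markings. Representability of $v | \hat{\cC}_i : \hat{\cC}_i \to X \qu G$ embeds each stabilizer of $\hat{\cC}_i$ into a stabilizer of an object of $X \qu G$, whose order divides $k$ by hypothesis. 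Standard properties of pushforward under representable morphisms of Deligne-Mumford stacks then give $(v | \hat{\cC}_i)_*[\hat{\cC}_i] \in \frac{1}{k} H_2^G(X,\Z)$, whence $\cE(v | \hat{C}_i) \in \frac{1}{k}\Z$. The same argument applies verbatim to an affine gauged map $(P,\cC,u,\ul{z},\delta) \in \ol{\M}^G_{n,1}(\bA,X,d)$, using that the map on components with infinite scaling factors through $X \qu G$ by Definition \ref{affinegauged}. Combining the two cases, $\cE(v) \in \frac{1}{k}\Z_{\ge 0}$, so $\cE(v) > 0$ forces $\cE(v) \ge 1/k$.

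The main technical point is the denominator bound on the degree of a representable morphism from a twisted curve to the orbifold $X \qu G$. This is implicit in the twisted-stable-map formalism of Abramovich-Graber-Vistoli \cite{agv:gw}, but requires one to track carefully the relationship between the equivariant Chern class $c_1^G(\ti{X})$ on $X/G$ and its pullback to $X \qu G$ so that the factor $1/k$ really bounds the denominator. Once this accounting is verified, the conclusion is a formal consequence of the additivity of the energy pairing and the non-negativity already provided by Proposition \ref{posen}.
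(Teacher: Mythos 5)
Your proposal is correct and follows essentially the same route as the paper: decompose the energy over components, use Proposition \ref{posen} for component-wise positivity, and show that $k$ times each component's energy is an integer. The only difference is that the paper obtains the integrality of $k\,u^*c_1(P(\ti{X}))$ by citing \cite[Theorem 1.187(iii)]{behrend:review}, whereas you prove it directly via representability and the divisibility hypothesis on stabilizer orders.
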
 

\begin{proof}  

By Proposition \ref{posen}, any component with non-zero energy has
positive energy defined as the pairing of $u^* c_1(P(\ti{X}))$ with
$[\cC]$.  By \cite[Theorem 1.187 part (iii)]{behrend:review}, $k u^*
c_1(P(\ti{X}))$ is represented by an integral divisor, and so an
integral cohomology class.  The statement of the Corollary follows.
\end{proof}

\section{Properness for trivial actions}

 In this section we show properness of the moduli stack of gauged
 scaled maps in the case that the group acting is trivial.

 \begin{proposition} \label{trivaction} Let $\XX$ be a smooth proper
   Deligne-Mumford stack with projective coarse moduli space $X$ and
   ample line bundle on the coarse moduli space $\ti{X} \to X$.  For
   any $n > 0, E > 0$, the union of the stacks
   $\ol{\M}_{n,1}(C,\XX,d)$ for $(d, c_1(\ti{X})) < E$ is proper.
\end{proposition}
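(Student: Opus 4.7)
The plan is to reduce properness to two already-proven proper moduli stacks: the moduli $\ol{\M}_{n,1}(C)$ of stable scaled curves from Theorem~\ref{scaledproper}, and the Abramovich-Vistoli moduli $\ol{\M}_{g(C),n}(C \times \XX,(1,d))$ of twisted stable maps. Because $G$ is trivial, Mundet semistability is vacuous and the condition on $v(\delta^{-1}(D_\infty))$ is automatic, so a stable scaled gauged map here is simply a scaled marked curve $(\hat{\cC},\delta,\ul{z})$ together with a representable morphism $v:\hat{\cC}\to C\times\XX$ of class $(1,d)$, subject to the finiteness-of-automorphisms stability of Definition~\ref{scaledgauged}.

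First I would establish boundedness. Forgetting $\delta$ and stabilizing the resulting map to $C\times\XX$ defines a morphism
\[
\on{st}:\ol{\M}_{n,1}(C,\XX,d)\longrightarrow \ol{\M}_{g(C),n}(C\times\XX,(1,d)),
\]
whose target is of finite type under the energy bound $(d,c_1(\ti{X}))<E$ by Abramovich-Vistoli. A fiber of $\on{st}$ parameterizes expansions of the stable domain by trees of rational components carrying markings and a scaling satisfying the affinization and monotonicity properties. Monotonicity permits at most one transition component per leaf-path, stability bounds the number of ghost components in each inserted tree in terms of $n$, and Corollary~\ref{kcor} bounds the number of inserted components with nontrivial map by $Ek$ for the universal integer $k$ there. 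Hence each fiber is of finite type, and so is $\ol{\M}_{n,1}(C,\XX,d)$.

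Next I would check the valuative criterion. Let $R$ be a discrete valuation ring with fraction field $K$. Given a family $(\hat{\cC}_K,v_K,\delta_K,\ul{z}_K)$ over $\Spec K$, properness of the Abramovich-Vistoli stack yields, possibly after a finite base change of $R$, a unique extension of the stabilized twisted stable map $(\hat{\cC}^{\on{st}},v^{\on{st}},\ul{z}^{\on{st}})$ over $\Spec R$. I then extend $\delta_K$ by taking the closure of its graph inside $\P(\omega_{\hat{\cC}^{\on{st}}/C}\oplus\mO_{\hat{\cC}^{\on{st}}})$, exactly as in the proof of Theorem~\ref{scaledproper}; irremovable singularities at nodes of the central fiber are resolved by replacing those nodes with fibers of the projectivized dualizing sheaf, producing a new curve $\hat{\cC}$ over $\Spec R$. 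On each newly inserted rational component the section $v^{\on{st}}$ is constant, with value the image of the corresponding node, so $v$ extends canonically as a constant map to $C\times\XX$ of class $(1,0)$ on that component. The monotonicity and affinization properties of the extended $\delta$ are forced by the deformation-parameter identity \eqref{oneform}, and uniqueness of the limit combines the uniqueness in Abramovich-Vistoli with the uniqueness statement embedded in Theorem~\ref{scaledproper}.

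The main obstacle I anticipate is verifying that the extension satisfies the finite-automorphism stability of Definition~\ref{scaledgauged}: each inserted bubble must carry at least three special points if $\delta$ is there $0$ or $\infty$, and at least two if $\delta$ is finite and nonzero. This will be forced by the combinatorics of the monotonicity partition of leaf-paths together with the characterization of transition components as precisely those on which the product of deformation parameters in \eqref{product} has a nontrivial finite limit; the argument is modeled on the stability bookkeeping already carried out in the proof of Theorem~\ref{scaledproper}. Boundedness combined with the valuative criterion then yields the claimed properness of $\ol{\M}_{n,1}(C,\XX,d)$.
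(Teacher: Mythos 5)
Your overall strategy --- reduce to Abramovich--Vistoli properness of $\ol{\M}_{g,n}(C\times\XX,(1,d))$ and to the properness of the scaled-curve moduli from Theorem~\ref{scaledproper} --- identifies the right ingredients, but the route you take through the valuative criterion has a gap at the step where you extend $\delta$ by taking the closure of its graph in $\P(\omega_{\hat{\cC}^{\on{st}}/C}\oplus\mO_{\hat{\cC}^{\on{st}}})$. The scaling $\delta_K$ is a section over the original domain $\hat{\cC}_K$, not over its stabilization: already over the generic point, stabilization may collapse ghost components carrying a finite non-zero scaling and only two special points (these are stable as scaled maps but unstable as plain stable maps to $C\times\XX$). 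On such a component $\delta$ is a one-form with a double pole, and after collapsing, that datum does not descend to a section over $\hat{\cC}^{\on{st}}_K$; so the ``closure of the graph'' is not defined on the object you have extended, and your procedure gives no recipe for re-inserting those components (and the markings or nodal branches they carry) into the central fiber. The same difficulty undermines your uniqueness claim. This is exactly the point where the general, reducible-domain case genuinely differs from the irreducible case treated in detail in the proof of Theorem~\ref{scaledproper}.

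The paper's proof avoids this entirely by a different device: choose an ample divisor $\DD = D\times_X\XX$ meeting each non-constant component of a given stable map in at least three points, away from markings, nodes and ghost components, and add the (ordered) intersection points as $k$ auxiliary markings. On the resulting \'etale chart $U\subset\ol{\M}_{n+k,1}(C,\XX,d)$ the forgetful morphism $\ol{\M}_{n,1}(C,\XX,d)\to\ol{\M}_{n}(C,\XX,d)$ becomes a base change of the morphism $\ol{\M}_{n+k,1}(C)\to\ol{\M}_{n+k}(C)$ of moduli of (scaled) curves with no map attached, which is proper because source and target are both proper by Theorem~\ref{scaledproper}. All the bookkeeping about which bubbles appear, their scalings, and their stability is thereby delegated to the already-established Theorem~\ref{scaledproper}, and no direct limit construction for $\delta$ in the presence of a map is needed. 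If you want to keep your direct valuative-criterion argument, you would need to rigidify first exactly as the paper does, or treat the collapsed scaled bubbles as separate families of stable scaled curves and glue; either way you essentially reconstruct the paper's argument.
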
 

\begin{proof}
By properness of moduli stacks of stable maps to Deligne-Mumford
stacks \cite[Section 6]{abramovich:compactifying}, the union of the
components
$$ \bigcup_d \ol{\M}_n(C,\XX,d)
:= \ol{\M}_{g,n}( C \times \XX, (1,d)) , \quad (d, c_1(\ti{X})) <
E$$
is proper.  Therefore it suffices to show that the forgetful morphism
$$ f: \ol{\M}_{n,1}(C,\XX,d) \to 
\ol{\M}_n(C,\XX,d) := \ol{\M}_{g,n}( C
\times \XX, (1,d)) $$
obtained by forgetting $\delta$ and collapsing unstable components is
proper.  Let $[u] \in \ol{\M}_n(C,\XX,d)$ with representative $u:
\hat{C} \to \XX$.  Since $\XX$ is projective, Bertini implies that
there exists a divisor $D \subset X$ transverse $u$ and meeting each
non-constant component of $u$ transversally and disjoint from the
markings and images of unstable components of the domain.  Let $\DD =
D \times_X \XX$ and $U \subset \ol{\M}_{n,1}(C,\XX,d)$ be the open
substack of maps such that each component meets $\DD$ transversally
and in a set of distinct points disjoint from the markings and ghost
components.  By taking a divisor $D$ of sufficiently large degree, we
may assume that for each component $\hat{C}_i \subset \hat{C}$, the
map $u$ restricted to $\hat{C}_i$ meets the divisor in at least three
points:
$$ \# (u |_{\hat{C}_i})^{-1}(\DD) \ge 3, \quad \forall \hat{C}_i \subset
\hat{C} .$$
Choose an ordering of the additional points $u^{-1}(\DD)$ meeting $u$.
Let $U$ denote the substack of $\ol{\M}_{n+k,1}(C,\XX,d)$ so that the
last $k$ points represent transverse intersections with $\DD$.  The
map forgetting the last $k$ points gives an \'etale morphism from $U$
to $\ol{\M}_n(C,\XX,d)$, see for example \cite[Proposition 4]{fu:st}.
The map
$$\ol{\M}_{n,1}(C,\XX,d) \supset U \to \ol{\M}_{n+k,1}(C), \quad (u:
\hat{\cC} \to \XX, \ul{z}) \mapsto (u: \hat{\cC} \to \XX, \ul{z} \cup
u^{-1}(\DD) ) $$
fits into a Cartesian diagram
$$ \begin{diagram} \node{ \ol{\M}_{n,1}(C,\XX,d)} 
\arrow{s} \node{U} \arrow{e} \arrow{w}
  \arrow{s} \node{\ol{\M}_{n+k,1}(C,\XX,d)} \arrow{e}
\arrow{s} \node{
    \ol{\M}_{n+k,1}(C)} \arrow{s} \\ \node{ \ol{\M}_{n}(C,\XX,d)} 
\node{ f(U)}
 \arrow{e} \arrow{w} \node{\ol{\M}_{n+k}(C,\XX,d)} \arrow{e}
  \node{ \ol{\M}_{n+k}(C)}
\end{diagram} $$
where the right-hand vertical arrow is proper.  Since the pull-back of
proper morphisms is proper and properness is \'etale local in the
target, the left-hand-arrow is also proper.  Since
$\ol{\M}_{n}(C,\XX,d)$ is proper, $\ol{\M}_{n,1}(C,\XX,d)$ is proper
as well.\end{proof}

\begin{proposition} \label{trivaction2}
For any $E, n >0 $ the union of moduli stacks $\ol{\M}_{n,1}(\bA,\XX,d)$
with $(d, c_1(\ti{\XX})) < E$ is proper.
\end{proposition}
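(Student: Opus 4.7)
The plan is to mirror the proof of Proposition \ref{trivaction} almost verbatim, replacing the smooth curve $C$ by its affine-scaled analog. I would first invoke properness of the moduli stack of twisted stable maps to Deligne--Mumford stacks \cite[Section 6]{abramovich:compactifying} to conclude that
$$ \bigcup_d \ol{\M}_{0,n+1}(\XX,d), \quad (d, c_1(\ti{X})) < E $$
is proper, where $z_0$ is now treated as an ordinary marking of a genus zero stable map. This reduces the problem to showing that the forgetful morphism
$$ f: \ol{\M}_{n,1}(\bA,\XX,d) \to \ol{\M}_{0,n+1}(\XX,d), $$
obtained by forgetting the scaling $\delta$ and contracting components that become unstable, is proper.

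To prove $f$ is proper I would repeat the Bertini-and-\'etale-cover argument from Proposition \ref{trivaction}. Given $[u] \in \ol{\M}_{0,n+1}(\XX,d)$ with representative $u:\hat{C}\to\XX$, Bertini provides an ample divisor $D \subset X$ of sufficiently high degree such that $\DD := D \times_X \XX$ meets $u$ transversely, avoids the nodes and existing markings, and intersects every component of $\hat{C}$ in enough points to stabilize that component both as a scaled curve and as an unscaled genus zero curve. Adding the $k := \#u^{-1}(\DD)$ intersection points as markings and choosing an ordering, I expect a composite of Cartesian squares
$$ \begin{diagram}
\node{\ol{\M}_{n,1}(\bA,\XX,d)} \arrow{s,l}{f}
  \node{U} \arrow{w} \arrow{e} \arrow{s}
  \node{\ol{\M}_{n+k,1}(\bA,\XX,d)} \arrow{e} \arrow{s}
  \node{\ol{\M}_{n+k,1}(\bA)} \arrow{s} \\
\node{\ol{\M}_{0,n+1}(\XX,d)}
  \node{f(U)} \arrow{w} \arrow{e}
  \node{\ol{\M}_{0,n+k+1}(\XX,d)} \arrow{e}
  \node{\ol{\M}_{0,n+k+1}}
\end{diagram} $$
in which the middle horizontal arrows are \'etale (cf.\ \cite[Proposition 4]{fu:st}) and the rightmost vertical morphism forgets $u$. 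Since $\ol{\M}_{n+k,1}(\bA)$ is proper by Theorem \ref{scaledproper} and $\ol{\M}_{0,n+k+1}$ is proper, the rightmost vertical arrow is a finite-type morphism between proper stacks, hence proper. By \'etale-local and base change stability of properness, $f$ is then proper, and therefore $\ol{\M}_{n,1}(\bA,\XX,d)$ is proper.

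The hard part will be verifying the Cartesianness of the rightmost square --- equivalently, that forgetting $\delta$ and then $u$ produces the same underlying stable curve as forgetting $u$ and then $\delta$. This requires that $D$ be chosen so that neither forgetful functor contracts any component: a component of an affine scaled gauged map carrying finite nonzero $\delta$ and constant $u$ is stable with just two special points as a scaled curve, but needs a third marking to remain stable once $\delta$ is forgotten, and symmetrically for components that are map-stabilized but not scaling-stabilized. Imposing that $\DD$ contribute at least three intersection points to every component of $\hat{C}$ eliminates all such discrepancies, so both routes around the square collapse nothing and yield the same result.
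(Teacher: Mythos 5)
Your proposal is correct and follows essentially the same route as the paper: both reduce to properness of the forgetful morphism to $\ol{\M}_{0,n+1}(\XX,d)$ (proper by Abramovich--Vistoli), and both establish this by adding divisor-intersection markings to build a Cartesian square whose right-hand vertical arrow is the map $\ol{\M}_{n+k,1}(\bA)\to\ol{\M}_{0,n+k}$ of proper stacks of (scaled) curves, then pulling back properness along the \'etale cover $U$. Your extra care about choosing $\DD$ to contribute at least three points per component so that neither forgetful functor contracts anything is a reasonable elaboration of a point the paper leaves implicit, but it is not a different argument.
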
 

\begin{proof}  Consider the forgetful map
$ f: \ol{\M}_{n,1}(\bA,\XX,d) \to \ol{\M}_{0,n+1}(\XX,d) $
defined by composing $\ol{\M}_{n,1}(\bA,\XX,d) \to
\ol{\MM}_{0,n+1}(\XX,d)$  with the stabilization map
$\ol{\MM}_{0,n+1}(\XX,d) \to \ol{\M}_{0,n+1}(\XX,d)$ \cite[Proposition
  3.10]{bm:gw}, \cite[Proposition 9.1.1]{abramovich:compactifying}.
As before, choose $[u] \in \ol{\M}_{0,n+1}(\XX,d)$ with representative
$u: \cC \to \XX$ and a divisor $\DD \subset \XX$ meeting $u$
transversally away from the markings and ghost components.  Properness
of $\M_{n+k,1}(\bA)$ and the Cartesian diagram
$$ \begin{diagram} \node{ \ol{\M}_{n,1}(\bA,\XX,d)}\arrow{s}  \node{  U } \arrow{w} \arrow{e} \arrow{s} \node{ \ol{\M}_{n+k,1}(\bA)} \arrow{s}  \\
\node{ \ol{\M}_{0,n}(\XX,d)} \node{ f(U)} \arrow{w} \arrow{e} \node{ \ol{\M}_{0,n+k}}
\end{diagram} $$
imply that $\ol{\M}_{n,1}(\bA,\XX,d)$ is proper over
$\ol{\M}_{0,n}(\XX,d)$.  Since $\ol{\M}_{0,n}(\XX,d)$ is itself proper
by \cite[Theorem 1.4.1]{abramovich:compactifying},
$\ol{\M}_{n,1}(\bA,\XX,d)$ is itself proper.
\end{proof} 

\section{Boundedness}  

In this section we show that the moduli space of gauged scaled maps
with fixed numerical invariants is finite type.  The results of
Ciocan-Fontanine-Kim-Maulik \cite[Section 3.2]{cf:st} imply such a
result in the case of a vector space target.  We extend the argument
here to the case of projective spaces. 

\begin{theorem} \label{bounded} (c.f. \cite[Theorem 3.2.5]{cf:st}) Let
  $E > 0 $ and $\cC$ a twisted prestable curve. Let $V$ be a
  finite-dimensional complex vector space with an action of $G$ via a
  representation $G \ra GL(V)$ with finite kernel and $X = \P(V)$.
  Suppose that the semistable locus $X^{\ss}$ is non-empty and equal
  to the stable locus.  Then the following family of gauged maps is
  bounded: pairs $v=(P,u)$ consisting of a principal $G$-bundle $P \to
  \cC$ and representable section $u: \cC \to P(X)$ such that the
  energy $\cE(v) < E$ and the section $u$ sends the generic point of
  $\cC$ to $P(X^{\ss})$.
\end{theorem}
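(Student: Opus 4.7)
The plan is to adapt the boundedness argument of Ciocan-Fontanine-Kim-Maulik \cite[Theorem 3.2.5]{cf:st} from vector-space targets to the projective target $X = \P(V)$. First I would reduce to the case $G = T$ a torus: by the Drinfeld-Simpson theorem every $G$-bundle $P \to \cC$ admits a reduction $P_B$ to a Borel subgroup $B \subset G$, with associated $T$-bundle $P_T = P_B/U$; the extensions from $P_T$ up to $P_B$ (and thence to $P$) are classified by cohomology with coefficients in sheaves of unipotent subgroups of $B$, which form a bounded family once $P_T$ does. Thus it suffices to bound the discrete data of $P_T$ together with the section $u$.

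In the torus case, decompose $V = \bigoplus_{i=1}^k V_i$ into weight spaces with weights $\mu_i \in \g_\Z^\dual$. A pair $(P_T,u)$ is equivalent to the data of a $T$-bundle $P_T$ of degree $d(P) \in \g_\Z$, a line bundle $L \to \cC$ with $d(u) := \deg(L^\dual) \in \Z$, and morphisms $\ti u_i \in H^0(P_T(V_i) \otimes L^\dual)$ for $i = 1,\dots,k$. Set $I = \{i : \ti u_i \neq 0\}$, which is nonempty. By the Hilbert-Mumford criterion for tori on $\P(V)$, the generic-semistability hypothesis forces $\theta \in \on{hull}(\mu_i : i \in I)$, and the stable $=$ semistable hypothesis puts $\theta$ in the relative interior of this full-dimensional hull. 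Nonvanishing of $\ti u_i$ gives $(\mu_i, d(P)) + d(u) \geq 0$ for each $i \in I$; combining with the energy identity $\cE(v) = (\theta, d(P)) + d(u) < E$ from Example \ref{toric example} yields
$$
\max_{i \in I}(\theta - \mu_i, d(P)) < E.
$$
Since $0$ lies in the interior of $\on{hull}(\theta - \mu_i : i \in I)$, this defines a bounded polytope of admissible $d(P)$; varying $I$ over the finitely many subsets of $\{1,\dots,k\}$, the degree $d(P)$ ranges in a bounded set, and the energy inequality then also bounds $d(u)$.

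With $(d(P), d(u))$ in a bounded range, the $T$-bundles of degree $d(P)$ and the line bundles $L$ of degree $-d(u)$ each form bounded families of Picard type, and for each such choice the tuples $(\ti u_i)$ lie in the finite-dimensional space $\bigoplus_i H^0(P_T(V_i) \otimes L^\dual)$ of bounded dimension by Riemann-Roch. This proves the torus case, and the general $G$-case follows by the Borel-reduction extension argument of the first paragraph. The main obstacle will be step one: one must verify that the Drinfeld-Simpson Borel reduction (together with passage to the associated graded, if needed) respects the energy bound and the generic-semistability condition, so that the torus invariants $(d(P), d(u))$ faithfully control the original pair $(P,u)$ and so that the $U$-cohomology classifying extensions back to $B$ and $G$ really does form a bounded family over the bounded family of $P_T$.
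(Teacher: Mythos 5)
Your torus-case argument is essentially the paper's: the same weight-space decomposition, the same inequalities $(\mu_i,d(P)^\vee)+d(u)\ge 0$ from nonvanishing of $\ti{u}_i$, the same use of the energy identity $\cE(v)=(\theta,d(P))+d(u)$, and the same observation that stable $=$ semistable places $\theta$ in the interior of a full-dimensional hull so that the linear inequalities confine $d(P)$ to a bounded polytope. That part is correct and is the heart of the proof. However, there are two genuine gaps. First, the step you yourself flag as the ``main obstacle'' --- passing from boundedness of the associated torus data back to boundedness of the family of $G$-bundles --- is left as a proposal, and your proposed mechanism (classifying the extensions $P_T \rightsquigarrow P_B \rightsquigarrow P$ by nonabelian cohomology with coefficients in unipotent subgroup sheaves) is not what is needed and is not carried out. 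The paper instead observes that the bound on $d(P_T)$ bounds the \emph{splitting type} of the associated graded $\Gr(F)$ of the vector bundle $F$ obtained from a faithful representation $G\to GL(r)$; a standard d\'evissage through the filtration (as in the boundedness arguments of Huybrechts--Lehn) then shows that after a fixed twist $F$ is globally generated with vanishing $H^1$, hence a quotient of a fixed trivial bundle $\mO_{\cC}^{\oplus l}$, so the bundles live in a single quot scheme and the $G$-reductions in an associated affine bundle. If you want to keep your extension-cohomology route you would have to actually produce the finite-type parameter space for those torsors; the filtration/quot-scheme route does this for you.

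Second, the statement is about a \emph{twisted} prestable curve $\cC$, and your proof never engages with the stacky structure: ``bounded families of Picard type'' and Riemann--Roch are invoked as if $\cC$ were an ordinary smooth curve. The paper reduces the twisted case to the scheme case by Olsson's theorem, producing (after \'etale cover) a finite flat surjection $\pi: Z \to \cC$ from a projective scheme and describing bundles on $\cC$ as descent data on $Z \lefttwoarrow Z\times_{\cC} Z$; representability of $u$ and the requirement $d(P)\in H_2(\cC,\Z/k)$ also enter here. Relatedly, your reduction to the torus needs the two verifications you mention --- that the induced section $\Gr(u)$ of the associated graded is still generically $T$-semistable and that the energy is preserved --- and these should be stated, not deferred: the first holds because $\pi_Q$ is $T$-equivariant and the generic point already lands in $X^{\ss}$, which is open, and the second because the degree data $(d(P),d(u))$ is locally constant in the degeneration to the associated graded.
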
 

\begin{proof}  
  A similar theorem with $\P(V)$ replaced by $V$ is given by
  Ciocan-Fontanine-Kim-Maulik \cite[Theorem 3.2.5]{cf:st}.  

  First we assume that $\cC$ is an ordinary curve and $G$ is a torus.
By lemma \ref{torus action on  P(V)}(4) we have
\begin{equation} \label{dform2}
 \cE(v) = (u_*[C],c_1(P(\ti{X}))) = (\theta,d(P)) + d(u) \in [0,E]
 .\end{equation}
By the Hilbert-Mumford criterion the semistable locus in $\P(X)$ is
$$ X^{\ss} = \P(V)^{\ss} = \left\{ [x_1,\ldots,x_k] \in \P(V) \ |
  \ \on{hull} ( \{ \mu_i | x_i \neq 0 \}) \ni \theta \right\} .$$
Let $u: C \to P(X)$ be a section that is generically semistable. Recall example \ref{toric example} and that $u$ is given by a $k$-tuple $(\ti{u}_1, \dotsc, \ti{u}_k)$.  The
condition that $u$ is generically semistable means each $\ti{u}_i \neq 0$ hence by lemma \ref{torus action on  P(V)}(3)
$$  (\mu_i, d(P)^\vee) + d(u) \ge 0, \quad \forall
i = 1,\ldots, k.$$
The same holds for any vector near $\theta$, since the condition of
lying in the convex hull is open in in the interior.  Choose a basis
$$ \xi_1,\ldots, \xi_r \in \g^\dual_\Q, \quad \on{hull}(\xi_1,\ldots,
\xi_r) \ni \theta $$
of points near $\theta$ so that
\begin{equation} \label{xiform}
 (\xi_i,d(P)^\vee) - d(u) \ge 0, \quad i = 1,\ldots, r .\end{equation}
Combining \eqref{xiform} and \eqref{dform2} shows that the possible
degrees $d(P)$ lie in a finite set.  

Next we consider the case that $\cC$ is an ordinary curve and $G$ is
an arbitrary compact connected reductive complex group.  By a simple
case of the Drinfeld-Simpson theorem \cite{ds:red}, \cite[Lemma
3.2.7]{cf:st}, the bundle $P$ admits a reduction to a Borel subgroup
$B \subset G$.  Let $\pi_B: B \to T$ be the projection \eqref{piq},
and $\Gr(P)$ the associated graded.  Since the map $\pi_Q$ is
$T$-equivariant, the section $u$ induces a section $\Gr(u): \cC \to
\Gr(P)(X)$ that is also $T$-semistable.  A $G$-bundle corresponds via
a faithful representation $ G \to GL(r)$ to a vector bundle $F \to C$
together with a reduction of the structure group, given by a section
of an affine bundle $GL(r)/G$.  Equation \eqref{xiform} shows that
splitting type of the associated graded $\Gr(F)$ is uniformly bounded
given a fixed $d \in H_2^G(X,\Z)$, and furthermore the first Chern
class $d(P)$ has bounded pairing with $c_1^G(\ti{X})$.

Given this bound on the splitting type of the associated graded, a
standard argument (see for example the boundedness arguments in
\cite[3.3]{hl:mod}) shows that after twisting by a sufficiently
positive bundle depending only on a bound on the splitting type, any
vector bundle $F \to C$ as above is generated by their global sections
and has no higher cohomology.  Indeed the long exact sequence in
cohomology shows that, for any locally free subsheaf $F' \subset F$
appearing as a summand in the associated graded $\Gr(F)$ we have an
exact sequence
$$ H^0(C,F') \to H^0(C,F) \to H^0(C,F/F') \to H^1(C,F') .$$
From this and the corresponding sequence for the twist $F(-z), z \in
C$ one obtains that if $F',F/F'$ are generated by their global
sections and have no higher cohomology then $F$ has the same property.
An induction shows that $F \to C$ is a quotient of a fixed trivial
bundle $ \mO_C^{\oplus l}$ for $l \ge k(E)$ where $k(E)$ is a constant
depending only on the energy bound $E$.  Thus the family of bundles is
bounded.

To show that the families of sections are bounded, note that any two
sections $u_0,u_1: \hat{C} \to P(X)$ have homology classes that differ
by an element of $H_2(X,\Q) \subset H_2^G(X,\Q)$.  The homomorphism
$$( \cdot, c_1^G(\ti{X})) \in \Hom(H_2^G(X,\Q),\Q)$$
restricts to the standard pairing on $H_2(X,\Q)$ corresponding to the
hyperplane class, the energy bound \eqref{dform2} implies that $d(u)$
is bounded from above and below.  Now the difference of homology
classes of any sections of $P(\P(V))$ lies in the kernel of the map
$H_2(P(\P(V)) \to H_2(C)$ and so, since $\P(V)$ is simply-connected,
lie in the image of the inclusion $H_2(\P(V)) \to H_2(P(\P(V))$ of a
fiber.  It follows that the degree also classifies homology classes of
sections:
\begin{equation} \label{iff} (u_1 \cong u_2 ) \iff (d(u_1) = d(u_2)) .\end{equation} 
By \eqref{iff} the possible homology classes of the sections are
bounded as well.  This shows that the family of maps to $X/G$ is
bounded.

Finally suppose that $\cC$ is a twisted curve and $G$ is complex
connected reductive.  Let $\hat{\cC} \to S$ be a family of twisted
curves, $P \to \hat{\cC}$ a family of bundles and $u: \cC \to P(X)$ a
family of sections as above.  By \cite[Theorem 1.14]{ol:logtwist},
after \'etale cover there exists a finite flat morphism $\pi: Z \to
\hat{\cC}$ from a projective scheme $Z \to S$ to $\hat{\cC}$; the
proof in fact shows that $\pi$ is surjective.  By faithfully flat
descent, sheaves on $\cC$ may be described in terms of descent data as
sheaves $E$ on $ Z \lefttwoarrow Z \times_{\cC} Z. $ Such data
consists of the bundle $Z$ together with isomorphisms $\varphi:
\pi_1^* E \to \pi_2^* E,$ where $\pi_1,\pi_2$ are the projections onto
the factors of $Z \times_{\cC} Z$ see
\href{http://stacks.math.columbia.edu/tag/03O6}{Tag 03O6} in
\cite{dejong:stacks}.  A principal $G$-bundle is given via an
embedding $GL(r)$ as descent data for a locally free sheaf of rank $r$
together with a reduction given by a section of the associated
$GL(r)/G$ bundle and an isomorphism $\varphi: \pi_1^* E \to \pi_2^* E$
preserving the $G$-reduction.  Any such substack may be realized via
quot scheme techniques as a quotient of a variety by a reductive group
action as above.
\end{proof} 

\begin{corollary}
  \label{mainfin} For any real $E > 0$, the union of components
$\ol{\M}_{n,1}^G(C,X,d)$, $\ol{\M}_n^G(C,X,d)$, and
  $\ol{\M}_{n,1}^G(\bA,X,d)$ with $(d, c_1^G(\ti{X})) < E$ is finite
  type.
\end{corollary}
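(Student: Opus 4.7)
The plan is to reduce each of the three moduli stacks to a finite union of substacks indexed by combinatorial type, and then apply Theorem \ref{bounded} on each irreducible component of the domain. First I would observe that the combinatorial type of the domain is bounded for fixed energy: by Corollary \ref{kcor}, every component on which the map has nonzero energy contributes at least $1/k$ to $(d,c_1^G(\ti{X}))$, while stability forces every component with zero energy to carry at least three (or, if the scaling is finite and nonzero, at least two) special points. Hence the number of components, and therefore the number of possible decorated graphs $\Gamma$ (including the distinguished root, transition vertices, marking labels, and per-vertex homology class summing to $d$), is bounded in terms of $E$ and $n$.

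Having fixed $\Gamma$, stratify each moduli stack by type and treat one stratum at a time. On the root component $\hat{C}_0 \cong C$ of a Mundet semistable datum in $\ol{\M}_n^G(C,X,d)$, Lemma \ref{genlem} ensures that in the chamber $k \geq k(d)$ the section is generically semistable, so Theorem \ref{bounded} directly produces a finite-type family of pairs $(P,u|_{\hat{C}_0})$. On bubble components of $\hat{C}$ that map to $C$ at a point, the data is a stable map to $P(X)$ of bounded class, and properness (hence finite type) of the Kontsevich stack of such stable maps applies after pulling back the (now fixed) bundle. Evaluation at the finitely many node points gives the gluing data, which lives in a finite-type variety by representability of the quotient stack $X/G$. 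The scaled case $\ol{\M}_{n,1}^G(C,X,d)$ is handled identically with the additional datum of $\delta$; on each fixed combinatorial type, $\delta$ lies in a balanced torus of deformation parameters as described after Theorem \ref{scaledproper}, which is of finite type.

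For the affine case $\ol{\M}_{n,1}^G(\bA,X,d)$, the root component has infinite scaling and the condition $v(\delta^{-1}(D_\infty)) \subset X^{\ss}/G$ combined with the representability requirement lets us view the data on components meeting $D_\infty$ as twisted stable maps to $X \qu G$; these form a finite type family for fixed class by \cite{agv:gw}. On transition components the section is generically semistable (since some node of the component lies in $\delta^{-1}(D_\infty)$), so Theorem \ref{bounded} again produces a bounded family of bundles and sections. On components with $\delta = 0$ the bundle is trivial by definition, and the section is a stable map to $X$ of bounded class, again finite type. Gluing across nodes imposes finitely many closed conditions in a finite-type product, so each stratum is finite type.

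The main obstacle is the argument on the root component in the Mundet and scaled cases: one must verify that the generic semistability needed to invoke Theorem \ref{bounded} really does hold in the relevant chamber $k \geq k(d)$, which is exactly the content of Lemma \ref{genlem}. Once that is in hand, finite type for each of the finitely many combinatorial strata, and hence for the union over $d$ with $(d,c_1^G(\ti{X})) < E$, follows from standard manipulations with hom stacks and finite fiber products.
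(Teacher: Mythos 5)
Your proposal is correct and follows essentially the same route as the paper: bound the number of irreducible components (hence the set of decorated combinatorial types) using the energy quantization of Corollary \ref{kcor} together with the stability condition, and then bound the bundles and sections on the pieces via Theorem \ref{bounded}. The only divergence is in how finite type of each stratum is packaged at the end: the paper treats the whole (possibly twisted) domain curve at once, describing the bundle by descent along a finite flat cover $Z \to \hat{\cC}$ and placing the data in finite-type substacks of $\Hom(Z,BG)$ and $\Hom(Z,X/G)$ via quot-scheme presentations, whereas you stratify by combinatorial type and glue component-wise moduli along evaluation maps at the nodes; both arguments work, with the paper's version handling the stacky structure of the domain more directly.
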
  

\begin{proof} 
  We consider only $\ol{\M}_{n,1}^G(C,X,d)$; the proof for the other
  moduli spaces is similar.  We first show that only finitely many
  combinatorial types are possible for a given energy bound.  Let $v =
  (P \to C, u: \hat{\cC} \to P(X))$ be a gauged map of class $d$ and
  energy $\cE(v) = ( d, c_1^G(\ti{X}) )$.  The energy of any component
  with non-zero energy $ \lan v_*[\cC_i], c_1^G(\ti{X}) \ran$ is at
  least $1/k$ for some integer $k$, by Corollary \ref{kcor}. It
  follows that the number of irreducible components $\hat{\cC}_i$ of
  the domain $\hat{\cC}$ with positive energy is bounded by $ k \cE(v)
  + n$.
  
  The bound on the number of components with positive energy gives a
  bound on the total number of components as follows.  Any component
  $\hat{\cC}_i$ of $\hat{\cC}$ on which the map $v: \hat{\cC} \to X/G$
  is trivial and has trivial scaling has at least three special
  points.  Removing the component $\hat{\cC}_i$ defines a curve
  $\hat{\cC} - \hat{\cC}_i$ with at least three connected components,
  and so a partition of the markings $\{ z_1,\ldots, z_n \}$ and
  irreducible components of $\hat{\cC}$ with non-trivial energy into
  three non-empty subsets.  Thus the number of components of the
  domain with trivial scaling is also bounded by the number of
  partitions of $ k \mE(d) + n$.  On the other hand, by the
  monotonicity condition the number of components with scaling is at
  most the number of terminal components, since there is at most one
  component with finite, non-zero scaling on the any path from a root
  component to the terminal component.  That is, the number of
  vertices $\Ver(\Gamma)$ of the combinatorial type $\Gamma$ of
  $\hat{\cC}$ is bounded by an integer $v = v(d)$.  There are finitely
  many trees $\Gamma$ satisfying this bound, hence finitely many
  possibilities for $\Gamma$.

  Given an energy bound, the possible homology classes of each
  component form a finite set by Theorem \ref{bounded} and the
  requirement that $d(P) \in H_2(\cC,\Z/k)$.  It follows that there
  are only finitely many possible labellings $\Ver(\Gamma) \to
  H_2^G(X,\Q)$ of the given graphs by degree two homology classes with
  the given energy bound; hence finitely many combinatorial types as
  claimed.  It follows that the image of $\ol{\M}_{n,1}^G(C,X,d) \to
  \ol{\MM}_{n,1}(C)$ is contained in an Artin stack of finite type for
  each $d \in H_2^G(X,\Z)$.

  We now use boundedness of the splitting type to prove that the
  moduli stack is finite type.  As in the proof of Theorem
  \ref{bounded}, we describe bundles on stacky curves in terms of
  descent data.  Let $\hat{\cC} \to S$ be a family of stacky curves,
  $P \to \hat{\cC}$ a family of bundles and $u: \cC \to P(X)$ a family
  of sections as above.  By \cite[Theorem 1.14]{ol:logtwist}, after
  \'etale cover there exists a finite flat morphism $\pi: Z \to
  \hat{\cC}$ from a projective scheme $Z \to S$ to $\hat{\cC}$; the
  proof in fact shows that $\pi$ is surjective.  By faithfully flat
  descent, sheaves on $\cC$ may be described in terms of descent data
  as sheaves $E$ on $ Z \lefttwoarrow Z \times_{\cC} Z ;$ that is, $Z$
  together with isomorphisms $\varphi: \pi_1^* E \to \pi_2^* E,$ where
  $\pi_1,\pi_2$ are the projections onto the factors of $Z
  \times_{\cC} Z$ see
  \href{http://stacks.math.columbia.edu/tag/03O6}{Tag 03O6} in
  \cite{dejong:stacks}.  A principal $G$-bundle is given via an
  embedding $GL(r)$ as descent data for a locally free sheaf of rank
  $r$ together with a reduction given by a section of the associated
  $GL(r)/G$ bundle and an isomorphism $\varphi: \pi_1^* E \to \pi_2^*
  E$ preserving the $G$-reduction.  Because the splitting type of $P
  \to \cC$ is bounded, the splitting type of $E$ is bounded as well,
  and the image of $P$ in $\Hom(Z,BG)$ is contained in a substack of
  finite type.  Any such substack may be realized by standard
  constructions via quot scheme techniques as a quotient of a variety
  by a reductive group action as in \cite[2.3.4]{lieblich:rem}.  Since
  the homology class $d \in H_2^G(X,\Q)$ is bounded, the image of $S$
  in $\Hom(Z,X/G)$ consists of sections with bounded homology class,
  and so also lies in a substack of finite type by \cite[Theorem
    1.4.1]{abramovich:compactifying}, see also Lieblich
  \cite{lieblich:rem}.  It follows that $\ol{\M}_{n,1}^G(C,X)$ is
  covered by finitely many stacks of finite type, and so is itself
  finite type.
\end{proof}

\section{Universal closure} 

In this section we show that the moduli stack of scaled gauged maps is
universally closed using the valuative criterion and Schmitt's git
construction \cite{schmitt:univ}.

\subsection{Removal of singularities for bundles on surfaces}

We begin with the following theorem of J.-L. Colliot-Th\'el\`ene and
J.-J. Sansuc, \cite{ciollot} describes extensions of bundles on
complements of finite subsets of surfaces, see also
Ciocan-Fontanine-Kim-Maulik \cite{cf:st}: For any scheme $X$ and
reductive group $G$ a {\em principal $G$-bundle} is a scheme $P$ with
a free right action of $G$ that is locally trivial in the fpqc
topology on $X$.  If $X$ is smooth, then this is equivalent to local
triviality in the \'etale topology.

\begin{theorem}  \label{extend} Let $X$ be a smooth complex variety of dimension two and 
$G$ a connected reductive group.  If $U \subset X$ is the complement
  of a finite set of non-singular points on $X$, then any principal
  $G$-bundle on $U$ is the restriction of a principal $G$-bundle on
  $X$, unique up to isomorphism.
\end{theorem}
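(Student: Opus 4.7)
The plan is to reduce to the general-linear case via a faithful representation, extend the associated vector bundle using a reflexivity argument on a smooth surface, and then extend the $G$-reduction using that the homogeneous space $GL(V)/G$ is affine. Throughout let $j\colon U \hra X$ denote the inclusion and set $Z := X \ssm U$, a finite set of smooth points of codimension two.

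I would begin with uniqueness and localization. Since $X$ is smooth of dimension two and $Z$ has codimension two, normality of $X$ gives $j_* \mO_U = \mO_X$. Two extensions $\ti P, \ti P'$ of a $G$-bundle $P$ over $U$ are related by sections of the affine scheme $\Isom(\ti P, \ti P')$ over $U$; such sections extend uniquely across $Z$ by $j_* \mO_U = \mO_X$. Hence extensions are unique up to unique isomorphism, which in turn allows me to glue local extensions. After choosing suitable open sets I may therefore assume $Z = \{p\}$ is a single point.

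Next I would extend the associated $GL(V)$-bundle. Choose a faithful representation $G \hra GL(V)$ and set $E := P \times_G V$, a locally free $\mO_U$-module of rank $\dim V$. Its pushforward $\ti E := j_* E$ is coherent, and the local cohomology exact sequence combined with $\on{depth} \mO_{X,p} = 2$ gives $\on{depth}_p \ti E \ge 2$. On a smooth surface any coherent sheaf satisfying $S_2$ is locally free, so $\ti E$ is a vector bundle on $X$ restricting to $E$ on $U$, and its frame bundle $\ti P_{GL} \to X$ is a principal $GL(V)$-bundle extending $P \times_G GL(V)$.

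It remains to recover the $G$-reduction of $\ti P_{GL}$. The $G$-structure on $P$ is a section $\sg$ of the associated quotient bundle $\ti P_{GL} \times_{GL(V)} (GL(V)/G) \to X$ over $U$. By Matsushima's theorem, reductivity of $G$ ensures that $GL(V)/G$ is affine, so $Y := \ti P_{GL} \times_{GL(V)} (GL(V)/G)$ is affine over $X$, say $Y = \Spec_X \Ac$ for a quasi-coherent $\mO_X$-algebra $\Ac$. The section $\sg$ corresponds to an $\mO_U$-algebra map $\Ac|_U \to \mO_U$; applying $j_*$ and using $j_* \mO_U = \mO_X$ yields an $\mO_X$-algebra map $\Ac \to \mO_X$, i.e.\ a section $X \to Y$ extending $\sg$, which gives the required $G$-reduction. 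The main obstacle is precisely this last step: without reductivity the quotient $GL(V)/G$ is merely quasi-affine and the adjunction argument breaks, so the whole scheme hinges on combining Matsushima's affineness criterion with the depth-based reflexivity used to extend the $GL(V)$-bundle.
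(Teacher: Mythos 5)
Your proposal is correct and follows essentially the same route as the paper's argument: extend the associated vector bundle across the punctured points using reflexivity/depth on a smooth surface, then extend the $G$-reduction of the resulting frame bundle using Matsushima's theorem that $GL(V)/G$ is affine together with a Hartogs-type extension of sections. Your treatment of uniqueness via the affine $\Isom$-scheme and of the section-extension step via the adjunction $j_*\mO_U = \mO_X$ simply makes precise the steps the paper cites to Hartshorne and to Hartogs' theorem.
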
 

We briefly recall the main point of the proof.  First, we prove the
corresponding result for vector bundles.  let $F \to U$ be a locally
free sheaf and $i: F \to X$ the inclusion.  The sheaf $i_* F \cong
i_*F^{\dual \dual}$ is reflexive by e.g. Hartshorne \cite[Corollary
  1.7]{har:ref}, and reflexive sheaves on surfaces are locally free by
e.g. Hartshorne \cite[Corollary 1.4]{har:ref}.  This shows that $F$
has an extension. The extension is unique up to isomorphism by
Hartog's theorem.  If $F_1,F_2 \to X$ are two such locally
free extensions then the given isomorphism $\varphi \in H^0(U,
\Hom(F_1,F_2))$ extends over $X$ since $X - U$ is a finite set.

Next we consider the case of arbitrary reductive groups.  Fix an
embedding $G \to GL(r)$ for some $r \ge 1$ and let $P \to U$ be a
principal $G$-bundle.  Using the embedding, we obtain a vector bundle
$E$, which extends uniquely to $X$.  Let $Q$ denote the frame bundle
of $F$.  The bundle $P$ corresponds to a reduction of structure group
$\sigma: U \to Q/P$.  Since $G$ is connected reductive, it follows
from Matsushima's criterion \cite{mats}, \cite{bb} that the
homogeneous space $GL(r)/G$ is a smooth affine variety. Moreover, by
assumption, $Q/G$ admits a section $\sigma$ over $U$. Thus, by
Hartogs' theorem, $\sigma$ extends over $X$.

\subsection{Existence of limits for families with finite scaling} 

First we show properness over the space of finite scalings using
Schmitt's git construction \cite{schmitt:univ} the Keel-Mori theorem
\cite{km:quot}.

\begin{lemma}   \label{forprop} 
The forgetful morphism to the moduli space of finite scalings
$$ \ol{\M}_{n,1}^G(C,X,d) \supset \rho^{-1}(\M_{0,1}) \to {\M}_{0,1}
\subset \ol{\M}_{0,1}, \quad [\hat{\cC},u, P, \ul{z}, \delta] \mapsto
\delta $$
is proper.
\end{lemma}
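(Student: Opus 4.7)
\emph{Proof proposal.} My plan is to verify the valuative criterion for properness by reducing the extension problem for scaled gauged maps with finite scaling first to Schmitt's projective quot-scheme compactification and then, via the Givental morphism, back to the Kontsevich-style moduli stack.

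Since $\rho^{-1}(\M_{0,1})$ is of finite type by Corollary \ref{mainfin}, it suffices to check that for any discrete valuation ring $R$ with fraction field $K$, every morphism $\Spec R\to \M_{0,1}$ together with a lift $\Spec K\to \rho^{-1}(\M_{0,1})$ of the generic point admits, after finite base extension, a unique diagonal filler $\Spec R\to \rho^{-1}(\M_{0,1})$. The datum $\Spec R\to\M_{0,1}\cong \bA^1$ is just a scaling parameter $\delta\in R$ that extends tautologically, so what must be extended is the underlying scaled gauged map $(\hat{\cC}_K,P_K,u_K,\ul{z}_K,\delta_K)$.

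First I would forget the markings and the stable-section structure to obtain a family of Mundet semistable objects in Schmitt's quot-scheme compactification $\ol{\M}^{G,\quot}(C,X,d)$, which is projective by \cite{schmitt:univ} and hence proper. Its valuative criterion provides a unique extension of the underlying bundle and generalized map $(P_R,j_R)$ over $\Spec R$, possibly acquiring finitely many base points on the special fibre. To recover a Kontsevich-style stable section I would then invoke the proper relative Givental morphism $\ol{\M}^G(C,X,d)\to \ol{\M}^{G,\quot}(C,X,d)$ of \eqref{givmor}, whose valuative criterion replaces each base point by a rational bubble tree absorbing the missing homology class on the special fibre. Markings are extended using properness of $\ol{\M}_n(C)$, which handles their stabilization on the enlarged domain $\hat{\cC}_R$. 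The scaling $\delta_R$ is then attached by declaring it to be zero on every bubble component, a choice forced by the monotonicity axiom of Definition \ref{scaledgauged}: since the root component already carries a finite scaling $\delta_s\in\bA^1$, no path from the root to a terminal component may contain a second transition component. Stability and uniqueness of the limit are inherited from the Kontsevich-style and Schmitt moduli, respectively, and the scaling datum contributes no additional automorphisms.

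The main obstacle I anticipate is to ensure that Schmitt's extended bundle $P_R$ really descends to a $G$-bundle on the Kontsevich-style total domain $\hat{\cC}_R$ obtained by bubbling off rational components at the base points. The total space $\hat{\cC}_R$ is a smooth algebraic surface obtained from $C\times \Spec R$ by blow-ups at a finite collection of closed points in the special fibre, and Schmitt's extension initially provides the bundle only on the complement of those codimension-two points. Theorem \ref{extend} of Colliot-Th\'el\`ene--Sansuc then guarantees the unique extension of this $G$-bundle across the finite set of missing points on $\hat{\cC}_R$. With this removal-of-singularities step in place, the limit $(\hat{\cC}_R,P_R,u_R,\ul{z}_R,\delta_R)$ defines a valid object of $\rho^{-1}(\M_{0,1})$ over $\Spec R$, verifying the valuative criterion.
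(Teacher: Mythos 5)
Your proposal runs on the same three ingredients as the paper's proof — Schmitt's projective quot-scheme compactification, the proper relative Givental morphism of Popa--Roth \eqref{givmor}, and properness of the marking/forgetful map — together with the observation that over the finite-scaling locus the scaling is determined (constant on the root, zero on all bubbles), so that $\rho^{-1}(\C)\cong \ol{\M}_n^G(C,X,d)\times\C$. The paper simply packages this as a composition of proper morphisms rather than unwinding the valuative criterion by hand. Two caveats on your version. First, the appeal to Theorem \ref{extend} in your last paragraph is misplaced: in this moduli problem the bundle $P$ lives on $C$ itself (an object is a pair $P\to C$, $u:\hat{\cC}\to P(X)$ with $u$ a stable section), so Schmitt's extension already produces $P_R$ on the smooth surface $C\times\Spec R$ with no missing points, and the bundle on the bubbled domain $\hat{\cC}_R$ is just the pullback of $P(X)$; removal of singularities for bundles on surfaces is genuinely needed only in the infinite-scaling limit (Theorem \ref{finite}, Step 2), not here.

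Second, and more substantively, you pass from projectivity of $\ol{\M}^{G,\quot}(C,X,d)$ to "its valuative criterion provides a unique extension of the underlying bundle and generalized map $(P_R,j_R)$." Schmitt's GIT construction a priori yields a projective \emph{coarse} moduli space whose points are $S$-equivalence classes, and extending a map $\Spec R\to$ (coarse space) is not the same as extending a family of objects over $\Spec R$. The paper closes this gap by invoking stable $=$ semistable, the Luna slice theorem to get finite inertia, and the Keel--Mori theorem to conclude that the stack is proper over its coarse space, hence proper. Your argument should include this step (or some Langton-type substitute) before quoting a valuative criterion at the level of families. With those two repairs the proof is the paper's proof.
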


\begin{proof} 
  This follows an argument given with Gonz\'alez \cite{cross}: there
  is a proper relative Givental map described in Popa-Roth
  \cite{po:stable}
  $$\ol{\M}^G(C,X,d) \to\ol{\M}^{G,\quot}(C,X,d).$$
  For each fixed bundle, this map collapses bubbles of the section $u$
  and replaces them with base points with multiplicity given by the
  degree of the bubble tree.  On the other hand,
  $\ol{\M}^{G,\quot}(C,X,d)$ has a git construction given in
  \eqref{schmittgit} and so has a proper coarse moduli space.  Finally
  $\ol{\M}_n^G(C,X,d) \to \ol{\M}^G(C,X,d)$ is proper, each forgetful
  map being isomorphic to a universal curve.  Under the
  stable=semistable assumption, the Luna slice theorem
  \cite{luna:slice} implies that $\ol{\M}_n^{G,\quot}(C,X,d)$ is
  \'etale-locally the quotient of a smooth variety by a finite group
  and so has finite inertia stack.  By the Keel-Mori theorem
  \cite{km:quot}, explicitly stated in \cite[Theorem 1.1]{conrad:kl},
  the morphism from $\ol{\M}_n^{G,\quot}(C,X,d)$ to its coarse moduli
  space is proper, so $\ol{\M}_n^G(C,X,d)$ is proper as well.  Hence
$$ \rho^{-1}(\C) \cong \ol{\M}_n^G(C,X,d) \times \C \to \C $$
%
is proper.  Schmitt's construction \cite[Section 2.7.2]{schmitt:git}
implies that if stable=semistable then the automorphism groups of
objects in $\ol{\M}^{G,\quot}_n(C,X,d)$ are finite, and so the moduli
stack is Deligne-Mumford.  Since quot schemes are projective, and
moduli spaces of stable maps to projective schemes are projective, the
moduli spaces $\ol{\M}^{G,\quot}_n(C,X,d)$ have projective coarse
moduli spaces and so are proper.
\end{proof} 

Next we show the valuative criterion for universal closure in the case
that the scalings go to infinity.  This is a combination of properness
for stable maps to the targets, its quotient, and removal of
singularities for bundles on surfaces.

\begin{theorem} \label{finite}
Given a family of scaled Mundet-semistable gauged maps over a
punctured curve $S$ with finite scaling $\delta$
$$(P,\hat{C}, u, \ul{z},\delta) \to S = \ol{S} - \{ \infty \} $$
there exists an extension over $\ol{S}$,  after \'etale cover.
\end{theorem}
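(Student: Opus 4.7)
The plan is to reduce the extension problem to the previously established properness results combined with removal of singularities for principal bundles on surfaces. Key ingredients are: properness of the moduli of scaled curves $\ol{\M}_{n,1}(C)$ (Theorem~\ref{scaledproper}), the finite-scaling statement (Lemma~\ref{forprop}), properness of twisted stable maps to the Deligne-Mumford stack $C \times X \qu G$ from \cite[Theorem~1.4.1]{abramovich:compactifying}, Schmitt's quot compactification \cite{schmitt:univ} combined with the Givental morphism, and Colliot-Th\'el\`ene-Sansuc removal of singularities (Theorem~\ref{extend}). Boundedness from Corollary~\ref{mainfin} controls the combinatorics of any limit.

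After a finite \'etale base change of $\ol{S}$, I would first use properness of $\ol{\M}_{n,1}(C)$ to extend the underlying family of scaled marked curves $(\hat{C}, \ul{z}, \delta)$ to a family over $\ol{S}$. By monotonicity, the components of the extended central fiber $\hat{C}_\infty$ carry scaling that is either infinite, finite non-zero (the transition components), or zero. If the root component of $\hat{C}_\infty$ inherits finite scaling, the extended family lies in $\rho^{-1}(\M_{0,1})$ and Lemma~\ref{forprop} supplies the extension of $(P,u)$ directly.

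Otherwise $\delta$ tends to infinity on the root. I would then extract a limit $(P_\infty, u_\infty)$ component-by-component on $\hat{C}_\infty$. On the union of infinite-scaling components, the Mundet condition in the chamber $k \ge k(d)$ from Corollary~\ref{large} combined with Lemma~\ref{genlem} forces the section to land generically in the semistable locus $X^{\ss}/G$; properness of twisted stable maps to $C \times X \qu G$ then produces a limiting twisted stable map of class $(1, d_0)$ there. On each bubble tree rooted at a special point of a transition component, the limit is an affine gauged map extracted from Schmitt's proper quot compactification (applied with $\P^1$-target) composed with the Givental morphism to resolve any base points. The principal bundle $P$ is initially defined on $\hat{C}$ minus the central fiber; after excising the finitely many nodes and special points of $\hat{C}_\infty$, the complement is a smooth surface minus a finite set of non-singular points, so Theorem~\ref{extend} extends $P$ uniquely to all of $\hat{C}$, automatically compatibly with the component-wise section limits by uniqueness on the dense open.

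The main obstacle is verifying that these component-by-component extractions glue into a single scaled gauged map satisfying the stability requirements of Definition~\ref{scaledgauged}: Mundet semistability on finite-scaling components and $v(\delta^{-1}(D_\infty)) \subset X^{\ss}/G$ on infinite-scaling components. Compatibility of bundle and section across the nodes of $\hat{C}_\infty$ follows from uniqueness in Theorem~\ref{extend} together with the evaluation-map/gluing axioms for the stable map moduli stacks involved, while preservation of Mundet semistability under specialization to $\infty \in \ol{S}$ is guaranteed because destabilization is a closed condition in families, a property built into Schmitt's git construction and the choice of large-$k$ chamber. The monotonicity and marking conditions on $\delta_\infty$ are already built into the scaled-curve extension produced in the first step.
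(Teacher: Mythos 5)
Your overall strategy --- limit to the GIT quotient on the infinite-scaling part, removal of singularities for the bundle, then properness of stable sections --- is recognizably the one the paper uses, but there is a genuine gap in the step where you extend the principal bundle. You write that $P$ is ``initially defined on $\hat{C}$ minus the central fiber'' and that after excising finitely many special points of $\hat{C}_\infty$ one can apply Theorem \ref{extend}. But the complement of the central fiber is the complement of a \emph{divisor} in the surface, not of a finite set of points, and Colliot-Th\'el\`ene--Sansuc only removes singularities at finitely many non-singular points; it gives no extension across a divisor. Before Theorem \ref{extend} can be invoked, the bundle must already be produced on a dense open subset of the central fiber. The paper does this by observing that the limiting stable scaled map $u_\infty^{X\qu G}$ to $X \qu G$ (obtained from Proposition \ref{trivaction}) carries a canonical $G$-bundle, namely the pullback of $X^{\ss} \to X^{\ss}/G$, defined away from the finitely many limits $\ul{\zeta}_\infty$ of the base points; only then does Theorem \ref{extend} fill in those finitely many points on the surface $\hat{\cC}^{BG}$. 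This pullback step is the crux of the argument and is missing from your proposal.

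Relatedly, your order of operations is backwards: you first extend the bare scaled curve $(\hat{C},\ul{z},\delta)$ in $\ol{\M}_{n,1}(C)$ and then try to populate the resulting central fiber with a map. The correct limit domain cannot be read off from the scaled-curve limit alone, because energy concentration of the map and the base points of the induced map to $X \qu G$ create components invisible to $\ol{\M}_{n,1}(C)$. The paper instead builds the domain in stages: it adds the base points $\ul{\zeta}$ as extra markings so that the limit in $\ol{\M}_{n,1}(C, X \qu G, d)$ records where the bundle may degenerate, collapses the zero-scaling components to obtain the curve $\hat{\cC}^{BG}_\infty$ carrying the bundle, and only at the end recovers the remaining bubbles from properness of stable maps to the projective bundle $P(X) \to \hat{\cC}^{BG}$ --- so there is no need for your ad hoc appeal to Schmitt's quot compactification ``with $\P^1$-target'' for the affine bubble trees, which in any case is a construction for gauged maps from the fixed curve $C$ rather than for scaled bubbles.
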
 

\begin{proof}
It suffices, by the Lemma \ref{forprop}, to consider the case that the
scaling $\delta$ becomes infinite.  We first consider the case that
$\hat{\cC} \cong C$.  There are three steps, in which we construct the
central fiber curve and a scaled gauged map by stages.  First we
construct a limit
$$\hat{\cC}_\infty^{X \qu G} \to C, \quad v_\infty^{X \qu G} :
\hat{\cC}_\infty^{X \qu G} \to X \qu G, \quad \ul{z}_\infty \subset
\hat{\cC}_\infty^{X \qu G} $$
by properness of the moduli space of stable maps to $X \qu G$.
However, the limiting domain $\hat{\cC}_\infty^{X \qu G}$ is not the
one we want because there may be bubbling in $X$ that is not captured
by bubbling in $X \qu G$.  Forgetting some of the components of
$\hat{C}_\infty^{X \qu G}$ and using removal of singularities for
bundles on surfaces gives a curve and map
$$\hat{\cC}_{BG} \to C, \quad \phi: \hat{\cC}_{BG} \to BG $$
where the map $\phi$ is a classifying map for an extension of the
bundle $P$ over $\hat{\cC}_{BG}$. Then we apply properness of the
moduli stack of sections of $P(X)$ to obtain the desired limiting
curve
$$\hat{\cC}_\infty \to C, \quad u_\infty: \hat{\cC}_\infty \to P(X),
\quad \delta_\infty: \hat{\cC}_\infty \to
\P(\omega_{\hat{\cC}_\infty/C} \oplus \mO_{\hat{\cC}_\infty}), \quad
\ul{z}_\infty \subset \hat{\cC}_\infty, .$$
Here are the details:

\vskip .1in \noindent {\em Step 1: Construct the part with infinite
  scaling.}  We first introduce the following notation for the maps to
the git quotient.  By definition of $k(d)$ and Lemma \ref{genlem}, the
maps $u: C \to P(X)$ are generically semistable, and so defines a
curve and map
$$C^{X \qu G} := u^{-1} P(X^{\ss}) \neq \emptyset, \quad u^{X \qu G} =
(u^{X \qu G}: C^{X \qu G} \to X \qu G) .$$
By properness of $X \qu G$, $u^{X \qu G}$ extends to a family of
stable maps with domain $C \times S$.  Order the base points so that
they give sections
$$ \zeta_i: S \to C, \quad \zeta_i(S) \subset P(X^{\us}), i =
1,\ldots, l.$$
Denote by $\ul{z} \cup \ul{\zeta}: S \to C^{n+ l}$ the family of
sections obtained by adding the base points and removing duplicates;
that is, after restricting to an open subvariety we may assume that
any two sections that coincide in one fiber, coincide everywhere; then
we just remove one of the duplicate sections.  Because the domain is
irreducible, the datum $ (C, u^{X \qu G}, \ul{z} \cup
\ul{\zeta},\delta )$ is a stable scaled map to the smooth
Deligne-Mumford stack $X \qu G$.

Properness for the moduli space of scaled maps with trivial group
action in Proposition \ref{trivaction} implies that the family extends
over the central fiber: Since $\ol{\M}_{n,1}(C, X \qu G,d)$ is proper,
the map $u^{X \qu G}$ extends over the central fiber to a stable
scaled map
$$ \left( \hat{\cC}_\infty^{X \qu G} \to C, \quad u^{X \qu G}_\infty:
\hat{\cC}_\infty^{X \qu G} \to X \qu G, \quad \ul{z}_\infty \cup
\ul{\zeta}_\infty, \quad \delta_\infty \right) .$$
In particular, the markings $\ul{\zeta}_\infty$ lie on the locus of
$\hat{\cC}_\infty^{X \qu G}$ with finite scaling $\hat{\cC}_\infty^{X
  \qu G} \ssm \delta_\infty^{-1}(D_\infty)$.

\vskip .1in \noindent {\em Step 2: Construct the part with finite
  scaling.}  Let $\Gamma$ be the combinatorial type of the limit
$\hat{\cC}_\infty^{X \qu G}$ in the previous step and $\Gamma'$ the
combinatorial type obtained by forgetting the components of
$\hat{\cC}_\infty^{X \qu G}$ on which the scale $\delta |
\hat{\cC}_\infty^{X \qu G}$ is zero.  More precisely, choose a family
of sections $ S \to (\hat{\cC}_\infty^{X \qu G})^k $ taking values in
the locus with non-zero scaling with the property that the components
with non-zero scaling become stable.  By e.g. Behrend-Manin
\cite[Lemma 3.12]{bm:gw}, there exists a proper family
$\hat{\cC}_\infty^{BG}$ of stable curves with a morphism from
$\hat{\cC}_\infty^{X\qu G}$ collapsing the components with zero
scaling.  The family $\hat{\cC}_\infty^{BG}$ consists of a collection
of components on which the scaling is finite and non-zero, or
infinite, with a morphism
$$ \varphi: \hat{\cC}_\infty^{X \qu G}\to \hat{\cC}_\infty^{BG} .$$
The scaling $\delta_\infty$ is finite at the base points
$\ul{\zeta}_\infty$ and markings $\ul{z}_\infty$.  The image of the
base points $\ul{\zeta}_\infty$ under the morphism $\varphi$ are
denoted
$\ul{\zeta}_\infty^{BG}= \varphi(\ul{\zeta}_\infty) .$
The points $\ul{\zeta}_\infty^{BG}$ are no longer necessarily distinct
from each other and the markings.  Because the scalings $\delta^{X \qu
  G}_\infty$ are finite at $\ul{\zeta}_\infty$, the scalings
$\delta^{BG}_\infty$ are finite at $\ul{\zeta}_\infty^{BG}$, that is,
$ \delta^{BG}_\infty(\ul{\zeta}_\infty^{BG}) < \infty .$ In
particular, all of the points $\ul{\zeta}_\infty^{BG}$ are
non-singular, since the only nodes in $\hat{\cC}_\infty^{BG}$ are
contained in $(\delta^{BG}_\infty)^{-1}(\infty)$.

Removal of singularities for bundles on surfaces implies that the
bundle extends over the central fiber.  The morphism $u^{X \qu G}$
induces an extension of the bundle $P_\infty^{BG}$ over the complement
of the base points $\ul{\zeta}^{BG}_{\infty}$, given by pull-back of
$$ P_\infty^{BG} \to \hat{\cC}_\infty^{BG} \ssm
\ul{\zeta}_\infty^{BG}, \quad P_\infty^{BG} := (u^{ X\qu G}_\infty |
\hat{\cC}_\infty^{BG} \ssm \ul{\zeta}_\infty^{BG})^* (X^{\ss} \to
X^{\ss}/G)$$
under $u^{X \qu G}_\infty$.  By construction, the points
$\zeta_{i,\infty}$ are non-singular points in $\hat{C}^{BG}_\infty$.
By removal of singularities for bundles Theorem \ref{extend}, the
bundle $P^{BG} \to \hat{\cC}^{BG}$ given by $P_\infty^{BG}$ over the
central fiber has a unique extension over the points
$\zeta_{i,\infty}$.  This implies the existence of a limiting bundle
$P_\infty \to \hat{\cC}_\infty^{BG}$ with classifying map
$$ \phi_\infty : \hat{\cC}^{BG}_\infty \to BG, \quad P_\infty :=
\phi_\infty^*(EG \to BG). $$
Denote by $\hat{C}^{BG}$ the resulting family over $\C^\times$, and
$P$ the resulting bundle over $\hat{\cC}^{BG}$.

\vskip .1in \noindent {\em Step 3: Construct the full limit.}  In the
last step we apply properness for the moduli stack of stable sections.
The associated fiber bundle $P(X) \to \hat{\cC}^{BG}$ is projective,
since $X$ is projective and $\hat{\cC}^{BG}$ is projective.  Then it
follows from properness of stable maps to $P(X)$ that there exists a
limit $u_\infty : \hat{\cC}_\infty \to P(X)$ extending $u$.  The
scaling naturally extends to a scaling $\delta_\infty$, possibly after
adding additional components with finite scaling and trivial maps.

We check that the limit constructed above satisfies the axioms of a
stable scaled gauged map.  The monotonicity condition on the scalings
is guaranteed by the description of the one-form in \eqref{oneform}.
Furthermore, on the locus $\delta^{-1}(D_\infty)$, the map $u_\infty $
agrees with the pull-back of $u_\infty^{X \qu G}$ and so takes values
in the semistable locus.  The locus $\delta^{-1}(D_0)$ is a union of
components that map to points in $\hat{\cC}_\infty^{BG}$.  This
implies that bundle $P$ is trivial on $\delta^{-1}(D_0)$.  Finally,
the inequality $\delta(z_{i, \infty}) < \infty$ is automatically
satisfied since the scaling on $\hat{\cC}_\infty^{X \qu G}$ is finite
at the markings, and the forgetful map maps all components with zero
scaling to loci where the scaling is finite.  Each component on which
the scaling and gauged are trivial has at least three special points,
since the limit $u_\infty$ is a stable section.  Each component with
finite, non-zero scaling has at least two special points contains
either the limit of a marking or a base point.  If trivial such a
component is attached to a component with trivial scaling, and so has
at least two special points.  Finally each component with infinite
scaling occurs in the domain $u^{X \qu G}$ and so has at least three
special points.

Finally we consider the general case that domains of the family are
nodal.  That is, we have a family of gauged maps $(P,\hat{\cC}, u,
\ul{z},\delta) \to S = \ol{S} - \{ \infty \} $ such that every
$\hat{\cC}_s$ is a nodal projective curve.  In this case we repeat the
first two steps for the family obtained by restricting to the
root component $\hat{\cC}_0 \subset \hat{\cC}$.  In the last
stage, properness for stable maps to $P(X)$ implies the existence of a
limit of $u: \hat{\cC} \to P(X)$.
\end{proof} 

Almost exactly the same argument shows that the moduli stack of affine
gauged maps is universally closed:

\begin{lemma} \label{infinite} 
Given a family of stable affine gauged maps over a punctured curve
$S$,
$$(P,\hat{\cC}, u, \ul{z},\delta) \to S := \ol{S} - \{ \infty \} $$
there exists an extension over $\ol{S}$, after \'etale cover.
\end{lemma}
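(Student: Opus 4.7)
The plan is to mimic the three-step proof of Theorem \ref{finite} almost verbatim, with the target smooth curve $C$ replaced by the affine base (so all bubbling happens into $\P^1$ or a bubble tree) and using Proposition \ref{trivaction2} in place of Proposition \ref{trivaction}. Work étale locally on $\ol{S}$, set $S^\circ = \ol{S} \setminus \{\infty\}$, and let the family be $(P,\hat{\cC} \to S^\circ, u, \ul{z}, \delta)$.

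First, produce the limit of the ``infinite-scaling'' part which lands in the git quotient. By definition of a stable affine gauged map together with Lemma \ref{genlem} applied on each component with infinite scaling, $u$ sends the generic point of $\delta^{-1}(D_\infty)$ into $P(X^{\ss})$, so we obtain a family of representable maps $u^{X\qu G}: \hat{\cC}^{X\qu G} \to X \qu G$ defined on the complement $\hat{\cC}^{X\qu G} \subset \hat{\cC}$ of finitely many base points. Order the base points as sections $\ul{\zeta}: S^\circ \to \hat{\cC}$ (passing to an étale cover of $S^\circ$), combine with $\ul{z}$, and extend to $\ul{z}_\infty \cup \ul{\zeta}_\infty$ over $\ol{S}$. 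The tuple $(\hat{\cC}^{X\qu G}, u^{X \qu G}, \ul{z} \cup \ul{\zeta}, \delta)$ is a family of stable affine scaled maps to $X \qu G$ with trivial group action. By Proposition \ref{trivaction2} applied to the Deligne-Mumford stack $X \qu G$ (which has projective coarse moduli space, hence admits the energy bound from Corollary \ref{mainfin}), this family extends uniquely over $\infty \in \ol{S}$ to a central fiber $(\hat{\cC}^{X \qu G}_\infty, u^{X \qu G}_\infty, \ul{z}_\infty \cup \ul{\zeta}_\infty, \delta_\infty)$.

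Second, stabilize away the components with zero scaling to obtain a family of curves $\hat{\cC}^{BG}$ carrying an extension of the bundle. As in Step 2 of Theorem \ref{finite}, apply \cite[Lemma 3.12]{bm:gw} to obtain a proper family $\hat{\cC}^{BG}$ with a contraction $\varphi: \hat{\cC}^{X \qu G} \to \hat{\cC}^{BG}$ collapsing components where $\delta$ vanishes; the images $\ul{\zeta}_\infty^{BG} = \varphi(\ul{\zeta}_\infty)$ of base points land in the locus where $\delta^{BG}_\infty$ is finite, and in particular at non-singular points of the central fiber, since nodes in $\hat{\cC}^{BG}_\infty$ only occur where $\delta^{BG}_\infty = \infty$. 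Pulling back the universal bundle via $u^{X\qu G}_\infty$ gives a principal $G$-bundle $P^{BG}_\infty$ on $\hat{\cC}^{BG}_\infty \setminus \ul{\zeta}^{BG}_\infty$. By removal of singularities for bundles on surfaces (Theorem \ref{extend}) applied to the total space of the two-dimensional family near each base point, we obtain a unique extension $P^{BG} \to \hat{\cC}^{BG}$.

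Third, extend the section. The associated fiber bundle $P^{BG}(X) \to \hat{\cC}^{BG}$ is projective since $X$ is. Properness of the stack of stable maps to $P^{BG}(X)$ (of bounded homology class, using Corollary \ref{mainfin} for the energy control) gives a limit $u_\infty: \hat{\cC}_\infty \to P^{BG}_\infty(X)$ extending $u$, where $\hat{\cC}_\infty \to \hat{\cC}^{BG}_\infty$ may contract finitely many additional components with trivial scaling carrying the bubbling of $u$. The scaling $\delta$ extends naturally by assigning scaling $0$ on these new components, which is forced by \eqref{oneform}. It remains to verify the axioms of Definition \ref{affine}: monotonicity follows from \eqref{oneform} as in Theorem \ref{finite}; $v_\infty(\delta_\infty^{-1}(D_\infty)) \subset X^{\ss}/G$ holds because on $\delta^{-1}(D_\infty)$ the section $u_\infty$ agrees with the pullback of $u^{X \qu G}_\infty$; triviality of $P_\infty$ on $\delta_\infty^{-1}(D_0)$ follows since these components map to points in $\hat{\cC}^{BG}_\infty$; finiteness of $\delta_\infty$ at $z_{1,\infty},\ldots,z_{n,\infty}$ and infiniteness at $z_{0,\infty}$ are inherited from the $X \qu G$ limit; and stability on each component follows component-by-component exactly as in Theorem \ref{finite}, using stability of $u^{X \qu G}_\infty$ on infinite-scaling components and stability of $u_\infty$ as a section on the remaining components. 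The main obstacle, as in the projective case, is ensuring that base points of the section at infinity sit at non-singular points of the limiting bundle curve $\hat{\cC}^{BG}_\infty$ so that Theorem \ref{extend} applies; this is precisely what the monotonicity condition and the contraction $\varphi$ guarantee.
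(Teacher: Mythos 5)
Your proof is correct and takes essentially the same approach as the paper: for this lemma the paper offers no separate argument, stating only that ``almost exactly the same argument'' as in Theorem \ref{finite} applies, and your three-step adaptation (with Proposition \ref{trivaction2} substituted for Proposition \ref{trivaction} in Step 1) is precisely that intended argument, carried out in more detail than the paper itself provides.
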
 

We use this and the statement of the Lemma above, which dealt with 
irreducible domain, to prove universal closure of the moduli space
of scaled gauged maps:

\begin{theorem} The moduli stack $\ol{\M}_n^G(C,X)$ is universally closed. 
\end{theorem}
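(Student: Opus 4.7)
The plan is to apply the valuative criterion for stacks, reducing universal closure to the properness of the scaled moduli stack over finite scalings already established in Lemma \ref{forprop}. Let $R$ be a discrete valuation ring with fraction field $K$, and let $v = (P,\hat{C},u,\ul{z})$ be an object of $\ol{\M}_n^G(C,X,d)$ over $\Spec K$. I would first promote $v$ to a stable scaled gauged map $\ti{v}$ over $\Spec K$ by equipping the root component $\hat{C}_0 \cong C$ with the constant scaling $\delta \equiv 1$; this produces an object of the fiber $\rho^{-1}(\{1\})$ of the forgetful morphism $\rho: \ol{\M}_{n,1}^G(C,X,d) \to \ol{\M}_{0,1} \cong \P^1$ to the scaling.

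By Lemma \ref{forprop}, the restriction of $\rho$ to the finite-scaling locus $\M_{0,1} \cong \C$ is proper; indeed, the proof of that lemma establishes a canonical identification $\rho^{-1}(\C) \cong \ol{\M}_n^G(C,X,d) \times \C$, under which $\rho^{-1}(\{1\}) \cong \ol{\M}_n^G(C,X,d)$. Since the constant map $\Spec K \to \{1\} \subset \M_{0,1}$ extends trivially to $\Spec R$, the valuative criterion applied to the proper morphism $\rho$ produces, possibly after a finite \'etale cover of $\Spec R$, an extension $\ti{v}_\infty$ of $\ti{v}$ over $\Spec R$ with $\delta_\infty \equiv 1$ throughout. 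Because $\ti{v}_\infty$ lies in $\rho^{-1}(\{1\}) \cong \ol{\M}_n^G(C,X,d)$, forgetting the scaling recovers the desired extension of $v$ in $\ol{\M}_n^G(C,X,d)$. As a sanity check, the monotonicity condition \eqref{monotone} forces $\delta_\infty = 0$ on every non-root bubble component of the central fiber, so the semistability criterion (1) of Definition \ref{scaledgauged} indeed identifies the underlying datum with a Mundet semistable gauged map in the chamber $k \ge k(d)$.

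Ranging over all $d \in H_2^G(X,\Q)$ separately then yields universal closure of the full stack $\ol{\M}_n^G(C,X)$, since every object over $\Spec K$ has a well-defined class $d$. The main substantive content, namely the construction of limits of Mundet semistable pairs via Schmitt's GIT compactification \cite{schmitt:univ} and the Keel--Mori theorem \cite{km:quot}, has already been carried out in Lemma \ref{forprop}; the present argument is a formal repackaging of that properness as the valuative criterion for the unscaled stack, so no genuine new obstacle arises here. The only point requiring care is that the extension of $\rho(\ti{v})$ over $\Spec R$ is unique (being the constant section at $1$), which guarantees that the extended scaled family really lies in the single fiber $\rho^{-1}(\{1\})$ rather than wandering toward $\infty \in \P^1$ and thereby introducing affine gauged bubbles or a genuine stable map to the GIT quotient.
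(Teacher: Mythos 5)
Your argument establishes only the properness of the unscaled stack $\ol{\M}_n^G(C,X,d)$, and that fact is already contained explicitly in the proof of Lemma \ref{forprop}, which notes that $\ol{\M}_n^G(C,X,d)$ is proper before concluding that $\rho^{-1}(\C) \cong \ol{\M}_n^G(C,X,d) \times \C \to \C$ is proper. The detour through the constant scaling $\delta \equiv 1$ is therefore a formal repackaging of that lemma and adds nothing. More importantly, the theorem --- as the paper's own proof makes plain --- concerns the moduli stack of \emph{scaled} gauged maps (the notation $\ol{\M}_n^G(C,X)$ in the statement is a slip for $\ol{\M}_{n,1}^G(C,X)$, which is the universal-closure input needed for Theorem \ref{main}): the paper starts from a family $(P,\hat{C},v,\ul{z},\delta) \to S$ carrying a scaling and must produce a limit even when $\delta|\hat{C}_0$ degenerates to infinity, or is already infinite on the generic fiber. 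Such families do not lie over $\M_{0,1} \cong \C$, so Lemma \ref{forprop} says nothing about them; their limits live in the fiber $\rho^{-1}(\infty)$ described in \eqref{fibers}, a union of products of graph spaces and moduli stacks of affine gauged maps.

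That degeneration is precisely where the substantive content of the section sits, and it is absent from your proof. The paper handles it by invoking Theorem \ref{finite} for families with finite scaling tending to infinity --- a three-step construction passing through stable maps to $X \qu G$, extension of the bundle across the base points via the Colliot-Th\'el\`ene--Sansuc removal of singularities (Theorem \ref{extend}) applied to the surface $\hat{\cC}^{BG}$, and then properness of stable sections of $P(X)$ --- together with Lemma \ref{infinite} for the affine bubble trees when the generic fiber already has infinite scaling, after which the pieces are glued along nodes using closure of the diagonal. Your closing remark that ``no genuine new obstacle arises'' is exactly backwards: the new obstacle is the degeneration of the scaling, and fixing $\delta \equiv 1$ removes the problem the theorem is meant to solve rather than solving it.
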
 

\begin{proof} Let $(P,\hat{C}, v, \ul{z},\delta) \to S = \ol{S} - \{ \infty \}$
be a family of scaled gauged maps over a curve $S$. We may assume that
either $\delta$ is finite or infinite on the root component
$\hat{C}_0$ for all $s \in S$, after possibly replacing $S$ with an
open subscheme.  In the finite case, the existence of a central
extension over the central fiber follows from \ref{finite}.  In the
infinite case, the central fiber is a collection of affine gauged maps
and maps to $X \qu G$ by gluing at the nodes.  That is, the curve
$\hat{C}$ is a union of components $\hat{C} = \hat{C}_0 \cup \hat{C}_1
\cup \ldots \hat{C}_r $ where $\hat{C}_0$ is a family of curves with
infinite scaling and  $\hat{C}_1,\ldots, \hat{C}_r$ are families of
affine scaled curves.  By Lemma \ref{infinite}, the restriction of the
families $(P,\hat{C}, v, \ul{z},\delta)$ to $\hat{C}_1,\ldots,
\hat{C}_r$ have extension over the central fiber.  Similarly
properness of the moduli stack of stable maps to $X \qu G$ implies the
existence of a limit of the restriction of the family to $\hat{C}_0$.
By closure of the diagonal, these families glue together to a scaled
gauged map on the central fiber.
 \end{proof}

\section{Separation} 

In this section we check the valuative criterion for separatedness.
This is again a combination of separatedness of the moduli stack of
stable maps to the target, its quotient, and uniqueness of extensions
on bundles on surfaces.  

\begin{proposition} 
For $i = 0,1$ let 
$v^i := (\hat{C}^i \to \ol{S}, P^i \to \hat{C}^i,
u^i: \hat{C}^i \to X / G, \delta^i, \ul{z}^i: S \to
\hat{C}^{i,n}) $ be families of stable scaled gauged maps over a curve
$\ol{S}$ that are isomorphic over the punctured curve $S = \ol{S} -
\{ \infty \}$.  Then $v^0$ is isomorphic to $v^1$ over $\ol{S}$.
\end{proposition}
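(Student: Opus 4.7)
The plan is to reduce separatedness to the corresponding results for the pieces that assemble a scaled gauged map: the underlying stable scaled curves, the Mundet-semistable gauged maps on the finite-scaling locus, stable maps to $X \qu G$ on the infinite-scaling root component, affine gauged maps on the bubble trees, and removal of singularities for bundles on surfaces. First, by Theorem \ref{scaledproper} the stack $\ol{\M}_{n,1}(C)$ is a separated scheme with trivial automorphism groups, so the underlying families of scaled marked curves $(\hat{C}^i, \delta^i, \ul{z}^i)$ are canonically isomorphic over $\ol{S}$; we may identify $\hat{C}^0 = \hat{C}^1 = \hat{C}$ compatibly with scalings and markings, and it remains to compare the bundles and sections.

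After possibly shrinking $S$, one of two situations holds at the central fiber $\infty \in \ol{S}$. In the finite scaling case, the central fiber $v^i_\infty$ is a Mundet-semistable gauged map in the chamber $k \ge k(d)$. As in the proof of Lemma \ref{forprop}, Schmitt's GIT construction \cite{schmitt:univ} combined with the stable=semistable hypothesis, the Luna slice theorem, and the Keel-Mori theorem shows that $\ol{\M}_n^G(C,X,d)$ is Deligne-Mumford and separated over the locus of finite scaling; the valuative criterion for separation on this stack then gives $v^0 \cong v^1$.

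In the infinite scaling case, decompose $\hat{C}$ into the subfamily $\hat{C}_0 \subset \hat{C}$ of root components (where $\delta |_{\hat{C}_0} = \infty$) and the bubble subcurves $\hat{C}_1,\ldots,\hat{C}_r$ carrying affine gauged data. On $\hat{C}_0$ the sections $u^i$ are generically semistable by Definition \ref{scaledgauged}, producing families of twisted stable maps $u^{X \qu G,i}\colon \hat{C}_0 \to X \qu G$; by separatedness of $\ol{\M}_n(C,X \qu G,d)$ (Abramovich-Vistoli \cite{abramovich:compactifying} together with Keel-Mori applied to the Deligne-Mumford stack $X \qu G$), the given isomorphism over $S$ extends to an isomorphism over $\ol{S}$ of these maps to $X \qu G$. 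This forces the bundles $P^0|_{\hat{C}_0}$ and $P^1|_{\hat{C}_0}$ to agree on the pre-image of $X^{\ss}/G$, and the isomorphism extends uniquely across the finitely many base points by the uniqueness clause of Theorem \ref{extend} (reflexive hulls on surfaces, and Hartog extension of morphisms of locally free sheaves across a finite set). On each bubble subcurve $\hat{C}_j$, the data forms a family of stable affine gauged maps; the analogue of the argument just given for Mundet-stable and git pieces, combined with Proposition \ref{trivaction2} and removal of singularities, gives separatedness of $\ol{\M}_{n,1}^G(\bA,X,d_j)$ and hence the desired isomorphism over $\ol{S}$.

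Finally the component-wise isomorphisms agree with the given isomorphism over $S$, and stability (finiteness of automorphism groups on each piece) ensures that they are the unique such extensions and therefore glue across the nodes; formally, closure of the diagonal for each piece, combined with the fact that the locus where two extensions agree is closed, implies that the resulting global isomorphism over $\ol{S}$ is well-defined. The main obstacle in this plan is controlling the bundle extensions at the transition nodes and base points where the map leaves the semistable locus; this is exactly what Theorem \ref{extend} and the Deligne-Mumford property from stable=semistable are designed to handle, so no new input is required beyond what has been established.
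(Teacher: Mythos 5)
Your overall architecture --- compare the induced maps to $X \qu G$ on the locus of non-zero scaling, extend the bundle isomorphism across the base points by the uniqueness clause of Theorem \ref{extend}, then invoke separatedness of stable sections of $P(X)$ --- matches the paper's three-step proof. But your opening move has a genuine gap. You identify the underlying scaled curves $(\hat{C}^i,\delta^i,\ul{z}^i)$ over $\ol{S}$ by appealing to separatedness of $\ol{\M}_{n,1}(C)$. The domain of a stable scaled gauged map is only a \emph{prestable} scaled curve: a component may carry fewer special points than stability of the bare curve requires and yet be a legitimate component of the gauged map because the map or bundle is non-trivial there. The forgetful morphism therefore lands in the Artin stack $\ol{\MM}_{n,1}(C)$, which is not separated, and knowing that the stabilizations of the two domains agree does not pin down the two central fibers: components that are stable only on account of the map could a priori attach at different points, or appear in one limit and not the other. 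So the curves cannot be identified before the maps are compared.

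This is exactly the difficulty the paper's proof is organized around. It chooses an ample invariant divisor $D \subset X$ as in \eqref{invsection} (which contains the unstable locus), adds the intersection points $\ul{\zeta}^i = (u^i)^{-1}(P(D))$ as auxiliary markings, and checks that every component with finite non-zero scaling that would become unstable over $X \qu G$ meets $D$ --- a map lying generically in a fiber of $X^{\ss} \to X^{\ss}/G$ must intersect the ample divisor $D$ --- hence acquires enough special points to be stable as a marked scaled map to $X \qu G$. Only then can Proposition \ref{trivaction} be applied to conclude that the central fibers of the induced maps to the git quotient, domains included, agree. The same divisor guarantees that the residual base points lie on components with infinite scaling, which is what legitimizes the appeal to Theorem \ref{extend} in your bundle step. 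Your proof needs this stabilization device (or an equivalent rigidification); without it the reduction to separatedness of maps to $X \qu G$ does not go through.
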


\begin{proof}  
Again this follows from a three-step process, in which we show that
the maps agree on parts of the limit corresponding to infinite,
finite, and zero scaling.

\vskip .1in \noindent {\em Step 1: The maps agree on the part with
  non-zero scaling.}  First we introduce for notation the induced map
to the git quotient.  Let $ \hat{\cC}_\infty^{i,X \qu G}$ denote the
union of components $\hat{\cC}^i_{\infty} $ with infinite or finite,
non-zero scaling $\delta_\infty^i | \hat{\cC}^i_{\infty}$.  The
inverse image of the semistable locus $X \qu G$ is dense in
$\hat{\cC}_\infty^{i,X \qu G}$ by Lemma \ref{genlem}.  By properness
of the stack $X \qu G$ we obtain maps
$$ u_\infty^{i,X \qu G} 
: \hat{\cC}_\infty^{i,X \qu G} \to X \qu G .$$

We would like to apply separatedness for the moduli stack of stable
scaled maps to the git quotient to show that these maps are
isomorphic.  However, the maps $u_\infty^{i, X \qu G}$ may not be
stable since they may contain unstable components.  To remedy this,
choose an ample invariant divisor $D \subset X$ as in
\eqref{invsection}.  Let $\ul{\zeta}^i = u^{-1}(P(D)) $ denote the
points mapping to $P(D)$.  Consider the scaled curve to $X \qu G$
given by $ (u^{i,X \qu G} , \delta^i, \ul{z}^i \cup \ul{\zeta}^i) $
where $\ul{z}^i \cup \ul{\zeta}^i$ denotes the union obtained by
adding the intersection points with the divisor and deleting
duplicates and nodes.  After restricting to an open subscheme of $S$
containing the central fiber, we may assume that $\ul{z}^i \cup
\ul{\zeta}^i$ are distinct, non-singular points.

We claim that the tuples constructed in the previous paragraph are
stable scaled maps.  Any component of $\hat{\cC}^i$ with
finite, non-zero scaling that becomes a unstable component after
passing to $X \qu G$ must either be stable, or correspond to a map to
$X/ G$ that lies generically in a fiber of $X^{\ss} \to X^{\ss}/G$.
Such maps always intersect $D$ since $D$ is ample.  Hence any such
component has at least two special points and a non-trivial scaling,
and so is stable.

Separation for the moduli stack of stable scaled maps to the git
quotient implies that the central fibers are isomorphic: By
Proposition \ref{trivaction} there exists an isomorphism
\begin{multline} 
(u_\infty^{0,X \qu G} : \hat{\cC}_\infty^{0,X \qu G} \to X \qu G , \quad
  \delta^0, \ul{z}^0 \cup \ul{\zeta}^0 ) \\ \cong (u_\infty^{1,X \qu
    G} : \hat{\cC}_\infty^{1,X \qu G} \to X \qu G , \quad \delta^1,
  \ul{z}^1 \cup \ul{\zeta}^1 ) .\end{multline}

\vskip .1in \noindent {\em Step 2: The limits agree on the part with
  non-zero scaling.}  This step is an application of uniqueness of
removal of singularities for bundles on surfaces in Theorem
\ref{extend}.  Let $\hat{\cC}_{\infty}^{i,X \qu G} $ denote the curves
from Step 1.  The classifying maps
$ \phi_i: \hat{\cC}_{\infty}^{i,X \qu G} 
\to BG $
are isomorphic (that is, the corresponding bundles are isomorphic)
except at finitely many non-singular points, the base points, since
the maps $u_i^{X \qu G}$ agree.  Furthermore, the base points
$\ul{\zeta}_i$ are contained in components of $\hat{\cC}_\infty^{i,X
  \qu G}$ with infinite scaling, since $D$ contains the unstable locus
$X^{\us}$.  By uniqueness of the completion of bundles on surfaces in
Theorem \ref{extend}, the bundles $P_0,P_1$ are isomorphic over
$\hat{\cC}^{i,X \qu G} $.

\vskip .1in \noindent {\em Step 3: The limits agree entirely.}
Finally we apply separatedness for families of stable sections to show
that the limiting sections are isomorphic.  Separation of stable maps
to $P_\infty(X)$ where
$ P_\infty : = P_i |_{\hat{\cC}_{\infty}^{i,X \qu G} } $
implies that there exists an isomorphism,
$$ (u^0 : \hat{\cC}^{0} \to P_\infty(X) , \ul{z}^0 ) \cong (u^1 :
\hat{\cC}^{1} \to P_\infty(X), \ul{z}^1 ) .$$
Since the scaled curves appearing in the limit already agree, this
implies that the stable scaled maps 
$$(\hat{C}^i \to \ol{S}, P_i \to
\hat{C}^i, u^i: \hat{C}^i \to P_i(X), \delta^i, \ul{z}^i: \ol{S} \to
\hat{C}^{i,n}) , \quad i = 0,1$$ 
are isomorphic.

The existence of a unique limit in the case of a family with infinite
scaling is similar and left to the reader.
\end{proof} 

This proves the valuative criterion for separatedness.  Universal
closure and of finite-type was shown in previous sections.  This
completes the proof of properness of $\ol{\M}_{n,1}^G(C,X,d)$ in
Theorem \ref{main}.  Finally we complete the proof of the properness
of moduli stacks of affine gauged maps.

\begin{corollary} For any $E > 0$,   the union of  moduli stacks of affine gauged maps $\ol{\M}^G_{n,1}(\bA,X,d)$ 
with $(d, c_1^G(\ti{X})) \leq E$ is proper.
\end{corollary}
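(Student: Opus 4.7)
The plan is to establish the three ingredients for properness—finite type, universal closure, and separatedness—by recycling the machinery already built up for the scaled gauged map stack $\ol{\M}^G_{n,1}(C,X,d)$. Finite type is immediate from Corollary \ref{mainfin}. Universal closure is precisely the content of Lemma \ref{infinite}, which verifies the valuative criterion by extending a family over the puncture $\ol{S}-\{\infty\}$ to a family over $\ol{S}$, after \'etale cover.

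For separatedness, the plan is to mirror the three-step argument just used for $\ol{\M}^G_{n,1}(C,X,d)$, which is in some sense simpler in the affine setting because the distinguished marking at infinity fixes more of the combinatorics. Suppose $v^0,v^1$ are two extensions over $\ol{S}$ of a common family over $S$. First, restrict each central fiber $v^i_\infty$ to the union $\hat{\cC}^{i,X\qu G}_\infty$ of components on which the scaling is nonzero, and adjoin markings at the preimages of an ample invariant divisor $D\subset X$ chosen as in \eqref{invsection}, so that the induced maps to the proper Deligne--Mumford stack $X\qu G$ become stable scaled maps. Proposition \ref{trivaction2} (separatedness of $\ol{\M}_{n,1}(\bA,X\qu G,d)$) then identifies the two central fibers as scaled maps to the git quotient. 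Second, apply the uniqueness half of Theorem \ref{extend} (removal of singularities for principal $G$-bundles on surfaces) to conclude that the bundles $P^i_\infty$ are canonically isomorphic on $\hat{\cC}^{i,X\qu G}_\infty$, since they agree away from the finitely many non-singular base points; on the remaining components, where the scaling vanishes, the bundles are trivial by Definition \ref{affinegauged}, so the isomorphism extends. Third, separatedness for stable maps to the projective bundle $P_\infty(X)$ identifies the sections $u^0_\infty$ and $u^1_\infty$ on the zero-scaling components.

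The main obstacle will be the bookkeeping in the first step, verifying that after adjoining markings at $D$ the induced maps to $X\qu G$ are genuinely stable. The argument is the one already used for $\ol{\M}^G_{n,1}(C,X,d)$: any component on which $v$ lands generically in a single fiber of $X^{\ss}\to X\qu G$ (the only source of instability after forgetting the $G$-structure) must intersect the ample divisor $D$, producing the required extra special point. An alternative approach—realizing $\ol{\M}^G_{n,1}(\bA,X,d)$ up to a $C$-factor as the closure of a combinatorial stratum of $\rho^{-1}(\infty)\subset\ol{\M}^G_{n,1}(C,X,d)$ via the decomposition \eqref{fibers}—would deduce properness more directly from Theorem \ref{main}, but requires identifying that stratum with $C\times\ol{\M}^G_{n,1}(\bA,X,d)$ after the $\C^\times$ and framing quotients, and so is really a reshuffling of the same content.
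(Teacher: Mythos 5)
Your argument is correct, but it takes a genuinely different route from the paper. You verify the three criteria directly: finite type from Corollary \ref{mainfin}, universal closure from Lemma \ref{infinite}, and separatedness by redoing the three-step argument (stabilize the induced map to $X \qu G$ by adjoining intersections with an ample invariant divisor and apply Proposition \ref{trivaction2}; then the uniqueness half of Theorem \ref{extend} for the bundle; then separatedness of stable sections of $P_\infty(X)$). This is exactly the verification the paper declines to write out --- its separation proposition ends with ``the case of a family with infinite scaling is similar and left to the reader'' --- so your proof has the merit of actually supplying that detail, and the one point that needs care (generic semistability on transition components, which follows since the unstable locus is closed and the map is semistable at the double pole) is handled correctly by your divisor argument. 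The paper instead proves the corollary in one line by an embedding argument: given an affine gauged map $(C_0,\delta,\ul{z},u)$ and a point $z \in C$, glue $C_0$ to $C$ at $z_0 \sim z$ and extend $u$ to be constant on the root component; this exhibits $\ol{\M}^G_{n,1}(\bA,X,d)$ as a closed substack of the stack of scaled gauged maps with infinite scaling, whose properness is Theorem \ref{main}, and a closed substack of a proper stack is proper. That is essentially the ``alternative approach'' you sketch in your last paragraph, except that the paper's version avoids the bookkeeping of identifying a whole stratum of $\rho^{-1}(\infty)$ via \eqref{fibers}: one only needs a single closed embedding, not an isomorphism with a stratum modulo framings and $\C^\times$-factors. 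In short, the paper's proof is shorter because it reuses Theorem \ref{main} wholesale; yours is longer but self-contained and makes explicit the separation step that the paper only asserts by analogy.
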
 

\begin{proof} The proof is by an embedding argument.  The stack
  $\ol{\M}^G_{n,1}(\bA,X,d)$ embeds in $\ol{\M}^G_n(C,X,y)$ as
  follows: Given an affine gauged map $(C_0,\delta,\ul{z},u)$ and a
  point $z \in C$ define $\hat{\cC} := (C_0 \sqcup C)/(z_0 \sim z)$
  and extend the map $u$ so that it is constant on the root component
  $\hat{\cC}_0 \cong C$.  Since any closed substack of a proper stack
  is proper, $\ol{\M}^G_{n,1}(\bA,X,d)$ is proper.
\end{proof}

\begin{remark} {\rm (Convex targets)} We conclude by describing 
results for convex targets: For the moduli stack of gauged maps to a
convex variety $X$ defined in Section \ref{convex}, the conclusion of
Theorem \ref{main} also holds.  As explained in Corollary
\ref{disjoint}, Mundet stable maps to $X$ are equivalent to maps to
$\ol{X}$ as long as the linearization $\ti{X}$ is chosen so that the
linearization is obtained from $\ti{X}(l)$ for $l$ sufficiently large.  

Conversely, for any class $d$ which pairs trivially with the class of
the divisor at infinity, affine gauged maps to $\ol{X} \qu G$ and maps
to $X \qu G$ of class $d$ are equivalent: the intersection number
between the map and divisor is zero and any such map cannot have a
component mapping entirely to the divisor.  In the case of maps to the
quotient, any point in $\ol{X}$ is unstable for $\ti{X}(l)$ for $l$
sufficiently large, and so $\ol{X} \qu G$ is isomorphic to $X \qu G$.
It follows that the inclusion $\ol{\M}_{n,1}^G(C,X) \to
\ol{\M}_{n,1}^G(C,\ol{X})$ is an isomorphism, as claimed, and in
particular the union of components $\ol{\M}_{n,1}^G(C,X,d)$ with
$\mE(d) < E$ is proper for any energy bound $E > 0$.
\end{remark}


\section{Table of notation} 
\label{table} 

This section contains a table of the notations for the different moduli stacks 
of stable maps to quotient stacks used in the paper. 

\begin{center} 
\begin{tabular}{l|l|l} 
Notation & Moduli stack & Page Number \\
\hline 
 $\ol{\M}_{g,n}(X)$ & Stable
maps of genus $g$ with $n$ markings & \pageref{mgn}
\\ 
$\ol{\M}^G_n(C,X)$ & Mundet-semistable gauged maps with $n$
markings & \pageref{mss} \\ $ \ol{\M}_n(C, X \qu G)$ & Stable sections of
$C \times X \qu G \to C$  & \pageref{graphs}
\\ $\ol{\M}^{G,\quot,\lev}_n(C,X)$ & Gauged maps with level structure
& \pageref{level} \\ $\ol{\M}^{G,\quot}_n(C,X)$ & Quot-scheme
compactification of gauged maps & \pageref{quots}
\\ $\ol{\M}^G_{n,1}(\bA,X)$ & Scaled affine gauged maps &
\pageref{affinegauged} \\ $\ol{\M}^G_{n,1}(C,X)$ & Scaled gauged maps
with domain $C$, $n$ markings & \pageref{scaledgauged} \\
\end{tabular}
\end{center}

\def\cprime{$'$} \def\cprime{$'$} \def\cprime{$'$} \def\cprime{$'$}
\def\cprime{$'$} \def\cprime{$'$}
\def\polhk#1{\setbox0=\hbox{#1}{\ooalign{\hidewidth
      \lower1.5ex\hbox{`}\hidewidth\crcr\unhbox0}}} \def\cprime{$'$}
\def\cprime{$'$}

\end{document}